\documentclass[11pt, twoside]{article}

\usepackage{amsmath}
\usepackage{amssymb}
\usepackage{titletoc}
\usepackage{mathrsfs}
\usepackage{amsthm}

\usepackage{indentfirst}
\usepackage{color}
\usepackage{txfonts}

\textwidth=15cm
\textheight=21cm
\oddsidemargin 0.46cm
\evensidemargin 0.46cm

\allowdisplaybreaks

\pagestyle{myheadings}
\markboth{\footnotesize\rm\sc Ryan Alvarado, Fan Wang, Dachun Yang and Wen Yuan}
{\footnotesize\rm\sc Pointwise Characterization of Besov and Triebel--Lizorkin Spaces on Spaces of Homogeneous Type}

\def\bint{{\ifinner\rlap{\bf\kern.30em--}
\int\else\rlap{\bf\kern.35em--}\int\fi}\ignorespaces}

\def\sbint{{\ifinner\rlap{\bf\kern.32em--}
\hspace{0.078cm}\int\else\rlap{\bf\kern.45em--}\int\fi}\ignorespaces}

\def\vpz{\vphantom}

\def\rr{\mathbb{R}}
\def\rn{\mathbb{R}^n}

\def\cc{\mathbb{C}}
\def\nn{\mathbb{N}}
\def\zz{\mathbb{Z}}

\def\dd{\mathbb{D}}

\def\dz{\delta}

\def\bz{\beta}
\def\gz{{\gamma}}

\def\ls{\lesssim}

\def\fz{\infty}
\def\az{\alpha}

\def\ca{{\mathcal A}}

\def\cd{{\mathcal D}}
\def\cf{{\mathcal F}}
\def\cg{{\mathcal G}}

\def\cm{{\mathcal M}}

\def\cx{{\mathcal X}}
\def\cy{{\mathcal Y}}

\def\lp{{L^p(\mathcal{X})}}

\def\icgg{\mathcal{G}_0^\eta(\beta,\gamma)}
\def\cgg{\mathring{\mathcal{G}}_0^\eta(\beta,\gamma)}

\def\hb{\dot{B}^s_{p,q}(\mathcal{X})}
\def\hf{\dot{F}^s_{p,q}(\mathcal{X})}
\def\ihb{B^s_{p,q}(\mathcal{X})}
\def\ihf{F^s_{p,q}(\mathcal{X})}
\def\hfi{\dot{F}^s_{\infty,q}(\mathcal{X})}

\def\r{\right}
\def\lf{\left}

\def\noz{{\nonumber}}

\def\r{\right}
\def\lf{\left}

\def\oint{\fint}

\def\diam{{\mathop\mathrm{\,diam\,}}}

\def\loc{{\mathop\mathrm{\,loc\,}}}

\DeclareMathOperator*{\esssup}{ess\ sup}
\DeclareMathOperator*{\essinf}{ess\ inf}

\def\eqref#1{(\ref{#1})}

\def\func#1{\mathop{\mathrm{#1}}}
\def\diam{\func{diam}}

\def\ya{y_\alpha^{k,m}}
\def\qa{Q_\alpha^{k,m}}
\def\qo{Q_\alpha^{0,m}}

\def\red{\color{red}}

\newtheorem{theorem}{Theorem}[section]
\newtheorem{lemma}[theorem]{Lemma}
\newtheorem{corollary}[theorem]{Corollary}
\newtheorem{proposition}[theorem]{Proposition}

\theoremstyle{definition}
\newtheorem{remark}[theorem]{Remark}
\newtheorem{definition}[theorem]{Definition}

\numberwithin{equation}{section}

\begin{document}

\title{\bf\Large Pointwise Characterization of Besov and Triebel--Lizorkin Spaces on Spaces of Homogeneous Type
\footnotetext{\hspace{-0.35cm} 2020 {\it Mathematics Subject Classification}. Primary 46E36; Secondary 46E35, 42B25, 30L99.\endgraf
{\it Key words and phrases.} space of homogeneous type, Besov space, Triebel--Lizorkin space,
Haj\l asz--Sobolev space, Haj\l asz--Triebel--Lizorkin space,  grand Triebel--Lizorkin space,
pointwise characterization.\endgraf
This project is partially supported by the National Key Research and Development Program of China
(Grant No.\ 2020YFA0712900) and the National
Natural Science Foundation of China (Grant Nos.\
11971058, 12071197 and 11871100).}}
\date{}
\author{Ryan Alvarado, Fan Wang, Dachun Yang\footnote{Corresponding author,
E-mail: \texttt{dcyang@bnu.edu.cn}/{\red June 21, 2021}/Final version.}\ \  and Wen Yuan}
\maketitle

\vspace{-0.8cm}

\begin{center}
\begin{minipage}{13cm}
{\small {\bf Abstract}\quad In this article, the authors establish the pointwise
characterization of Besov and Triebel--Lizorkin
spaces on spaces of homogeneous type via clarifying the
relationship among Haj\l asz--Sobolev spaces, Haj\l asz--Besov
and Haj\l asz--Triebel--Lizorkin spaces,
grand Besov and Triebel--Lizorkin
spaces, and Besov and Triebel--Lizorkin
spaces. A major novelty of this article is that all results presented
in this article get rid of both the dependence on the
reverse doubling condition of the measure and the metric condition of
the quasi-metric under consideration. Moreover, the pointwise
characterization of the inhomogeneous version is
new even when the underlying space is an RD-space.
}
\end{minipage}
\end{center}

\vspace{0.2cm}


\vspace{0.2cm}

\section{Introduction}\label{intro}

It is well known that Besov and Triebel--Lizorkin
spaces provide a unified frame for the study of many
function spaces and indeed cover many well-known
classical concrete function spaces such as
Lebesgue spaces, Sobolev spaces,
potential spaces, (local) Hardy spaces, and the space of
functions with bounded mean oscillation.
We refer the reader to monographs \cite{bin,t83,t92,t06,w88} for
a comprehensive treatment of these function
spaces and their history. We also refer the reader to \cite{ysy} for
relationships among Morrey spaces, Campanato
spaces, and Besov--Triebel--Lizorkin spaces,  to \cite{as18,Sa18} for
some new progress of Besov and Triebel--Lizorkin spaces, and
to \cite{b20,b20b,bd17,bbd20} for various characterizations and applications of Besov and
Triebel--Lizorkin spaces associated with operators.

Particularly, fractional Sobolev spaces play an
important major role in many questions
involving partial differential equations on $\rn$.
It is known that a theory of first order Sobolev spaces
on doubling metric spaces has been
established based on both upper gradients \cite{s00,hk98}
and pointwise inequalities \cite{h96};
see \cite{h00,hk00} for a survey on this.
These different approaches result in the same
function class if the underlying space supports
a suitable Poincar\'e inequality \cite{kz08}.
In this article, we further investigate the spaces
introduced by Haj\l asz \cite{h96} (see also
\cite{y036} and Definition~\ref{h-gra} below)
which are defined via pointwise inequalities.

On another hand, as a generalization of $\rn$,
the space of homogeneous type was introduced by
Coifman and Weiss \cite{cw71,cw77}
(see Definition \ref{h-sp} below),
which provides a natural setting for the study
of function spaces and the boundedness
of Calder\'on--Zygmund operators.
Spaces of homogeneous type, with some additional assumptions,
have been extensively investigated  in many
articles. For instance, the \emph{Ahlfors $d$-regular space} is
a special  space of homogeneous type
satisfying the following condition: there exists
a positive constant $C$ such that, for any ball
$B(x,r)\subset\cx$ with center $x$ and radius $r\in(0,\diam \cx)$,
$$
C^{-1}r^d\leq \mu(B(x,r))\leq Cr^d,
$$
where here, and thereafter, $\diam \cx:=\sup_{x,\ y\in \cx} d(x,y)$.
Another case is the RD-space (see
\cite{j94,hmy06,hmy08} for instance), which is
a doubling metric measure space satisfying the
following additional \emph{reverse doubling condition}:
there exist positive constants $\widetilde{C}_{(\mu)}\in(0,1]$
and $\kappa\in(0,\omega]$ such that, for any
ball $B(x, r)$ with $r\in(0, \diam \cx/2)$ and
$\lambda\in[1,\diam \cx/(2r))$,
\begin{equation*}
\widetilde{C}_{(\mu)}\lambda^\kappa\mu(B(x, r))\leq\mu(B(x, \lambda r)).
\end{equation*}
Obviously, an RD-space is a generalization of
an  Ahlfors $d$-regular space. We refer
the reader to \cite{yz11} for more equivalent characterizations of RD-spaces.

Besov and Triebel--Lizorkin spaces on spaces
of homogeneous type satisfying some
additional assumptions were also studied.
We refer the reader to
\cite{hy02,hy03, y041,y051,y052} for various
characterizations of Besov and Triebel--Lizorkin
spaces on Ahlfors $d$-regular spaces,
and to \cite{y033,y034, y035,y042} for some applications.
We also refer the reader to
\cite{hmy08,my09,yz11} for
various characterizations of these Besov and Triebel--Lizorkin spaces on RD-spaces.
Besides, Koskela et al. \cite{kyz10,kyz11} introduced
the Haj\l asz--Besov and Haj\l asz--Triebel--Lizorkin spaces on RD-spaces.
We refer the reader to \cite{gpy13,hit16,hph17,ht16,ht14b} for
various characterizations and applications of Haj\l asz--Besov and
Haj\l asz--Triebel--Lizorkin on a metric measure space
satisfying the doubling property.

Recently, using the wavelet reproducing formulae in \cite{hlw},
Han et al. \cite{hhhlp20} introduced Besov and
Triebel--Lizorkin spaces on spaces of
homogeneous type and established several  embedding theorems.
On the other hand, Wang et al. \cite{whhy} also
introduced Besov and Triebel--Lizorkin spaces
on spaces of homogenous type,
based on the Calder\'{o}n reproducing formulae established
in \cite{hlyy}, and established the boundedness of
Calder\'on--Zygmund operators on these spaces as
an application. Later, He et al. \cite{hwyy20} obtained  characterizations of
Besov and Triebel--Lizorkin spaces via
wavelets, molecules, Lusin area functions,
and Littlewood--Paley $g_\lambda^\ast$-functions.
Besides, He et al. \cite{hwyy20} also showed that those two kinds of Besov
and Triebel--Lizorkin spaces studied,
respectively, in \cite{hhhlp20} and \cite{whhy} coincide.
Then Wang et al. \cite{whyy} established the  difference
characterization of Besov and Triebel--Lizorkin
spaces on spaces of homogenous type.

To complete the theory of Besov and Triebel--Lizorkin
spaces on spaces of homogenous type,
it is a natural question whether or not we can
also establish a pointwise characterization of
Besov and Triebel--Lizorkin spaces on space
of homogenous type. The main target of this
article is to give an affirmative answer to this question.

The organization of the remainder of this article is as follows.

In Section \ref{s1}, we first recall the notions of homogenous
Besov and Triebel--Lizorkin spaces on spaces of
homogeneous type introduced in \cite{whhy},
and then introduce Haj\l asz--Sobolev spaces
and Haj\l asz--Besov--Triebel--Lizorkin
spaces on spaces of homogeneous type.  Then
we state the main result of this article; see Theorem \ref{mr} below.

In Section \ref{s2}, we first introduce the  homogenous grand
Besov and Triebel--Lizorkin spaces on spaces of homogeneous type.
Then we investigate the relation among
homogeneous grand Besov and
Triebel--Lizorkin spaces and homogeneous Besov
and Triebel--Lizorkin spaces; see Theorem \ref{btl-gbtl} below.
Later, we establish the equivalence between homogenous
Haj\l asz--Besov--Triebel--Lizorkin spaces and  homogeneous Besov
and Triebel--Lizorkin spaces; see Theorem \ref{hbtl-gbtl} below.
To this end, we first establish a Poincar\'e type
inequality (see Lemma \ref{pc} below). It should
be mentioned that, in the proof of Lemma \ref{pc},
the constant $A_0$ appearing in the quasi-
triangle inequality (see Definition \ref{qa-d}
below) also brings some difficulty.
That is why we need additional restrictions on
parameters involved therein. Besides, all the
proofs in Section \ref{s2} get rid of the
dependence on the reverse doubling assumption.

In Section \ref{s4}, we establish the equivalence between
inhomogeneous Haj\l asz--Besov--Triebel--Lizorkin spaces
and  inhomogeneous Besov--Triebel--Lizorkin spaces
(see Theorem \ref{imr} below). To this end, we first
establish a new characterization of inhomogeneous
Besov--Triebel--Lizorkin spaces (see Theorem \ref{h-btl} below).

Finally, let us make some conventions on notation.
For any given $p\in(0,\infty]$, the \emph{Lebesgue space}
$L^p(\cx)$ is defined by setting, when $p\in(0,\infty)$,
$$
L^p(\cx):=\left\{f \ \text{is measurable on} \ \cx :\
\|f\|_{\lp}:=\left[\int_{\cx}|f(x)|^p\,d\mu(x)\right]^{1/p}<\infty\right\},
$$
and
$$
L^\infty(\cx):=\left\{f \ \text{is measurable on} \ \cx :\  \|f\|_{L^\infty(\cx)}:=
\displaystyle{\esssup_{x\in\cx}|f(x)|<\infty}\right\}.
$$
Throughout this article, we use $A_0$ to denote
the positive constant appearing in the \emph{quasi-triangle inequality}
of $d$ (see Definition \ref{qa-d} below), the parameter $\omega$
to denote the \emph{upper dimension} in Definition~\ref{h-sp}
[see \eqref{eq-doub} below], and $\eta$ to denote
the smoothness index of the exp-ATI in Definition~\ref{exp-ati}
below. Moreover, $\delta$ is a small positive number, for instance,
$\delta\leq(2A_0)^{-10}$, coming from the construction of the
dyadic cubes on $\cx$ (see Theorem \ref{exp-ati}). For any given $p\in[1,\infty]$,
we use $p'$ to denote its conjugate index, that is, $1/p + 1/p' = 1$.
For any $r\in \rr$, $r_+$ is defined by setting
$r_+:=\max\{0,r\}$. For any $a,\ b\in\rr$,
let $a\wedge b:=\min\{a,b\}$ and $a\vee b :=\max\{a,b\}$.
The symbol $C$ denotes a positive constant which is
independent of the main parameters involved, but may vary
from line to line. We use $C_{(\alpha,\beta,\dots)}$
to denote a positive constant depending on the indicated
parameters $\alpha,\ \beta,\ \dots$.
The symbol $A\lesssim B$ means that $A\leq CB$ for some
positive constant $C$, while $A\sim B$ means $A\lesssim B\lesssim A$.
If $f\le Cg$ and $g=h$
or $g\le h$, we then write $f\ls g\sim h$ or $f\ls g\ls h$,
\emph{rather than} $f\ls g=h$ or $f\ls g\le h$.
The set of positive integers is denoted by $\nn$, namely, $\nn=\{1,2,\dots\}$,
and the set $\zz_+:=\{0,1,2,\dots\}$.
For any $r\in(0,\infty)$ and $x,\ y\in\cx$ with $x\neq y$, define $V(x,y):=\mu(B(x,d(x,y)))$
and $V_r(x):=\mu(B(x,r))$. For any  $\beta,\ \gamma\in(0,\eta)$ and
$s\in (-(\beta\wedge\gamma),\beta\wedge\gamma)$,
we always let
\begin{equation}\label{pseta}
p(s,\beta\wedge\gamma):=\max\left\{\frac{\omega}{\omega+
(\beta\wedge\gamma)},\frac{\omega}{\omega+(\beta\wedge\gamma)+s}\right\},
\end{equation}
where $\omega$ and $\eta$ are, respectively, as in \eqref{eq-doub}
and Definition \ref{exp-ati}. The operator $M$ always denotes
the \emph{central Hardy--Littlewood maximal
operator} which is defined by setting, for any
locally integrable function $f$ on $\cx$ and any $x\in\cx$,
\begin{equation}\label{m}
M(f)(x):=\sup_{r\in(0,\infty)}\frac{1}{\mu(B(x,r))}\int_{B(x,r)}|f(y)|\,d\mu(y).
\end{equation}
For any set $E\subset\cx$, we use
$\mathbf 1_E$ to denote its characteristic function and,
for any set $J$, we use $\#J$ to denote its
\emph{cardinality}.

\section{Besov and Triebel--Lizorkin spaces
and Haj\l asz--Sobolev spaces on spaces of
homogeneous type}\label{s1}

In this section, we recall the notions of Besov
and Triebel--Lizorkin spaces on spaces of homogeneous type and introduce
Haj\l asz--Sobolev spaces on spaces of homogeneous type.
Let us begin with the notion of quasi-metric spaces.

\begin{definition}\label{qa-d}
A \emph{quasi-metric space} $(\cx, d)$ is a non-empty set $\cx$
equipped with a \emph{quasi-metric} $d$,
namely, a non-negative function defined on $\cx\times\cx$
satisfying that, for any $x,\ y,\ z \in \cx$,
\begin{enumerate}
\item[{\rm(i)}] $d(x,y)=0$ if and only if $x=y$;
\item[{\rm(ii)}] $d(x,y)=d(y,x)$;
\item[{\rm(iii)}] there exists a constant $A_0 \in [1, \infty)$,
independent of $x$, $y$, and $z$, such that
$$d(x,z)\leq A_0[d(x,y)+d(y,z)].$$
\end{enumerate}
\end{definition}

The \emph{ball} $B$ of $\cx$, centered at $x_0 \in \cx$
with radius $r\in(0, \infty)$, is defined by setting
$$
B:=B(x_0,r):=\{x\in\cx: d(x,x_0)<r\}.
$$
For any ball $B$ and $\tau\in(0,\infty)$, we denote by $\tau B$
the ball with the same center as that of $B$ but
of radius $\tau$ times that of $B$.

\begin{definition}\label{h-sp}
Let $(\cx,d)$ be a quasi-metric space and
$\mu$ a non-negative measure on $\cx$.
The triple $(\cx,d,\mu)$ is called
a \emph{space of homogeneous type} if $\mu$ satisfies the
following doubling condition: there exists a
positive constant $C\in[1,\infty)$ such that,
for any ball $B \subset \cx$,
\begin{equation}\label{eq-doub2}
0<\mu(2B)\leq C\mu(B)<\infty.
\end{equation}
\end{definition}
Let
$C_{(\mu)}:=\sup_{B\subset \cx}\mu(2B)/\mu(B)$.
Then it is easy to show that  $C_{(\mu)}$ is the smallest positive constant such that \eqref{eq-doub2} holds true.
The above doubling condition implies that,
for any ball $B$ and any $\lambda\in[1,\infty)$,
\begin{equation}\label{eq-doub}
\mu(\lambda B)\leq C_{(\mu)}\lambda^\omega\mu(B),
\end{equation}
where $\omega:= \log_2 C_{(\mu)}$ is called the \emph{upper dimension} of $\cx$.
Note that $\omega\in(0,\fz)$ (see, for instance, \cite[p.\,72]{rm15}).
If $A_0 = 1$, then
$(\cx,d,\mu)$ is called a \emph{metric measure space of homogeneous type}
or, simply, a \emph{doubling metric measure space}.

Without loss of generality, we may make the following assumptions
on $(\cx,d,\mu)$. For any point $x\in \cx$, we assume that
the balls $\{B(x,r)\}_{r\in(0,\fz)}$ form a basis of open neighborhoods
of $x$. Moreover, we suppose that $\mu$ is \emph{Borel regular}
which means that open sets are measurable and every set
$A\subset \cx$ is contained in a Borel set $E$ satisfying that $\mu(A)=\mu(E)$.
We also assume that $\mu(B(x,r))\in (0,\fz)$ and
$\mu(\{x\})=0$ for any given $x\in \cx$ and $r\in(0,\fz)$.

Now, we recall the notion of test functions and distributions on $\cx$,
whose following versions were originally introduced in
\cite{hmy08} (see also \cite{hmy06}).

\begin{definition}[test functions]
Let $x_1\in\cx$, $r\in(0,\infty)$, $\beta \in (0,1]$,
and $\gamma \in (0,\infty)$. For any $x\in\cx$, define
\begin{equation}\label{decay}
D_\gamma(x_1,x;r):=\frac{1}{V_r(x_1)+V(x_1,x)}\left[\frac{r}{r+d(x_1,x)}\right]^\gamma.
\end{equation}
A measurable function $f$ on $\cx$ is
called a \emph{test function of type $(x_1, r, \beta, \gamma)$}
if there exists a positive constant $C$ such that
\begin{enumerate}
\item[{\rm(i)}] for any $x\in\cx$,
\begin{equation}\label{t-size}
|f(x)|\leq CD_\gamma(x_1,x;r);
\end{equation}
\item[{\rm(ii)}] for any $x,\ y \in \cx$ satisfying $d(x, y)\leq(2A_0)^{-1}[r + d(x_1, x)]$,
\begin{equation}\label{t-reg}
|f(x)-f(y)|\leq C\left[\frac{d(x,y)}{r+d(x_1,x)}\right]^\beta
D_\gamma(x_1,x;r).
\end{equation}
\end{enumerate}
The set of all test functions of type $(x_1, r, \beta, \gamma)$
is denoted by $\cg(x_1, r, \beta,\gamma)$.
For any  $f\in \cg(x_1, r, \beta, \gamma)$,
its norm $\|f\|_{\cg(x_1, r, \beta, \gamma)}$ in $\cg(x_1, r, \beta, \gamma)$ is defined by
setting
$$
\|f\|_{\cg(x_1, r, \beta, \gamma)}:=\inf\{C\in(0,\infty):\,
\text{\eqref{t-size} and \eqref{t-reg} hold true}\}.
$$
Its subspace $\mathring{\cg}(x_1, r, \beta, \gamma)$ is defined by setting
$$
\mathring{\cg}(x_1, r, \beta, \gamma):=\left\{f\in\cg(x_1, r, \beta, \gamma):
\int_{\cx}f(x)\,d\mu(x)=0\right\}
$$
and is equipped with the norm $\|\cdot\|_{\mathring{\cg}(x_1, r, \beta, \gamma)}
:=\|\cdot\|_{\cg(x_1, r, \beta,\gamma)}$.
\end{definition}

Note that, for any fixed $x_1,\ x_2\in\cx$ and $r_1,\ r_2 \in(0,\infty)$,
$\cg(x_1,r_1,\beta,\gamma) = \cg(x_2,r_2,\beta,\gamma)$
and $\mathring\cg(x_1,r_1,\beta,\gamma)
=\mathring\cg(x_2,r_2,\beta,\gamma)$
with equivalent norms, but the positive equivalence  constants
may depend on $x_1$, $x_2$, $r_1$, and $r_2$.
Thus, for fixed $x_0\in\cx$ and $r_0\in(0,\infty)$,
we may denote $\cg(x_0, r_0, \beta, \gamma)$ and
$\mathring{\cg}(x_0, r_0, \beta, \gamma)$ simply,
respectively, by $\cg(\beta,\gamma)$ and $\mathring{\cg}(\beta,\gamma)$.
Usually, the spaces $\cg(\beta,\gamma)$ and $\mathring\cg(\beta,\gamma)$ are called
the \emph{spaces of test functions} on $\cx$.

Fix $\epsilon\in (0, 1]$ and $\beta,\ \gamma \in (0, \epsilon]$.
Let $\cg^\epsilon_0(\beta,\gamma)$ [resp.,
$\mathring{\cg}^\epsilon_0(\beta, \gamma)$] be the completion
of the set $\cg(\epsilon, \epsilon)$ [resp.,
$\mathring{\cg}(\epsilon, \epsilon)$] in $\cg(\beta,\gamma)$
[resp., $\mathring{\cg}(\bz,\gz)$]. Furthermore,
the norm of $\cg^\epsilon_0(\beta,\gamma)$
[resp., $\mathring{\cg}^\epsilon_0(\beta, \gamma)$] is defined
by setting $\|\cdot\|_{\cg^\epsilon_0(\beta,\gamma)}:=\|\cdot\|_{\cg(\beta,\gamma)}$ [resp.,
$\|\cdot\|_{\mathring{\cg}^\epsilon_0(\beta,\gamma)}:=\|\cdot\|_{\cg(\beta,\gamma)}$]. The dual space
$(\cg^\epsilon_0(\beta,\gamma))'$ [resp., $(\mathring{\cg}^\epsilon_0(\beta,\gamma))'$]
is defined to be the
set of all continuous linear functionals from
$\cg^\epsilon_0(\beta,\gamma)$ [resp.,
$\mathring{\cg}^\epsilon_0(\beta,\gamma)$] to $\mathbb{C}$,
equipped with the weak-$\ast$ topology. The
spaces $(\cg^\epsilon_0(\beta,\gamma))'$
and $(\mathring{\cg}^\epsilon_0(\beta,\gamma))'$ are called the
\emph{spaces of distributions} on $\cx$.

The  following lemma, which comes from \cite[Theorem 2.2]{hk},
establishes the dyadic cube system of
$(\cx,d,\mu)$.

\begin{lemma}\label{2-cube}
Let constants $0 < c_0\leq C_0 <\infty$ and $\delta\in(0, 1)$
be such that $12A_0^3C_0\delta\leq c_0$.
Assume that a set of points, $\{ z_\alpha^k : k \in\zz ,
\alpha \in \ca_k \} \subset \cx$ with $\ca_k$ for
any $k \in\zz$ being a set of indices, has the following properties: for any $k \in\zz$,
$$d(z^k_\alpha, z^k_\beta)\geq c_0\delta^k\quad
\text{if}\quad\alpha\neq\beta,\quad
\text{and}\quad\min_{\alpha\in\ca_k}d(x,z_\alpha^k)<C_0\delta^k\quad\text{for any}\quad x\in\cx.$$
Then there exists a family of sets,
$\{ Q_\alpha^k : k \in\zz , \alpha \in \ca_k \}$, satisfying
\begin{enumerate}
\item[{\rm(i)}] for any $k\in\zz$, $\bigcup_{\alpha\in\ca_k}
Q_\alpha^k=\cx$ and $\{ Q_\alpha^k : \alpha \in \ca_k \}$ consists of mutually disjoint sets;
\item[{\rm(ii)}] if $l,\ k\in\zz$ and $k\leq l$, then,
for any $\alpha\in\ca_k$ and $\beta\in\ca_l$,
either $Q_\beta^l\subset Q_\alpha^k$
or $Q_\beta^l\cap Q_\alpha^k=\emptyset$;
\item[{\rm(iii)}] for any $k\in\zz$ and $\alpha\in\ca_k$,
$B(z^k_\alpha, (3A_0^2)^{-1}c_0\delta^k)\subset Q_\alpha^k
\subset B(z^k_\alpha, 2A_0C_0\delta^k)$.
\end{enumerate}
\end{lemma}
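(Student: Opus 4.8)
The statement above is the Christ-type dyadic cube construction in quasi-metric generality, i.e.\ \cite[Theorem~2.2]{hk}, so the plan is to reproduce that argument. The starting point is that a single scale is elementary. Fix, once and for all, a well-ordering of each index set $\ca_k$, and to every $x\in\cx$ and $k\in\zz$ assign the index $\alpha(x,k)\in\ca_k$ that is \emph{least}, in that well-order, among the indices minimizing $d(x,z^k_\alpha)$. Setting $V^k_\alpha:=\{x\in\cx:\alpha(x,k)=\alpha\}$ produces, for each $k$, a partition $\cx=\bigcup_{\alpha\in\ca_k}V^k_\alpha$ into pairwise disjoint sets, and the covering and separation hypotheses give
$$
B\bigl(z^k_\alpha,(2A_0)^{-1}c_0\delta^k\bigr)\subset V^k_\alpha\subset B\bigl(z^k_\alpha,C_0\delta^k\bigr),
$$
where the left inclusion follows by applying the quasi-triangle inequality to $c_0\delta^k\le d(z^k_\alpha,z^k_\beta)$ for $\alpha\ne\beta$, and the right one from $\min_\beta d(x,z^k_\beta)<C_0\delta^k$. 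Thus these Voronoi-type cells already satisfy (i) and (iii), in fact with slightly better radii than demanded. The only property they can fail is the nesting (ii): a fine cell $V^l_\beta$ need not be contained in a single coarse cell $V^k_\alpha$.

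To repair this I would build an ancestor tree on the reference points. For consecutive scales $k,k+1$ and $\gamma\in\ca_{k+1}$, declare the \emph{parent} $\pi(\gamma)\in\ca_k$ to be the least index minimizing $d(z^{k+1}_\gamma,z^k_\alpha)$, so that $d(z^{k+1}_\gamma,z^k_{\pi(\gamma)})<C_0\delta^k$; iterating yields, for every $l\ge k$, an ancestor map $\ca_l\to\ca_k$, and one writes $(l,\gamma)\preceq(k,\alpha)$ when $\alpha$ is the ancestor of $\gamma$. Here is where the quantitative hypothesis $12A_0^3C_0\delta\le c_0$ earns its keep: summing the parent displacements along a chain, with each application of the quasi-triangle inequality costing a power of $A_0$, one must know that the total displacement incurred in passing from scale $l$ up to scale $k$ stays below a fixed multiple of $\delta^k$, and this is exactly what the smallness of $\delta$ relative to $A_0,c_0,C_0$ secures. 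One then defines $Q^k_\alpha$ by correcting the cells $V^k_\alpha$ so as to be compatible with $\preceq$: morally, $Q^k_\alpha$ is assembled — consistently across \emph{all} scales — by reassigning each point $x$, whose Voronoi labels $(\alpha(x,j))_j$ fail to form a $\pi$-chain, to the cube whose level-$k$ label is the ancestor of the label of $x$ at the first scale below which the chain is already consistent. Verifying (i) and (ii) for the resulting family is then bookkeeping, and (iii) survives because each such reassignment can move a point out of $V^k_\alpha$ only by a controlled multiple of $\delta^k$; this controlled loss is precisely why the radii in (iii) are the slightly degraded $(3A_0^2)^{-1}c_0\delta^k$ and $2A_0C_0\delta^k$ rather than $(2A_0)^{-1}c_0\delta^k$ and $C_0\delta^k$.

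The argument is delicate rather than routine for two reasons. First, one must track with care the powers of $A_0$ that the quasi-triangle inequality generates when iterated along chains of parents; this tracking is what pins down both the relation $12A_0^3C_0\delta\le c_0$ between the construction constants and the exact radii asserted in (iii), and is the step I expect to be the main obstacle. Second, since there is neither a coarsest nor a finest scale in $\zz$, the passage from the Voronoi cells to genuinely nested cubes cannot be organized as a one-directional induction: the reassignment must be made coherent simultaneously for all $k\in\zz$. Both points are carried out in full detail in \cite[Theorem~2.2]{hk}, to which we refer for the complete proof.
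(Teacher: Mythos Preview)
Your proposal is correct and aligns with the paper's treatment: the paper does not prove this lemma but simply quotes it as \cite[Theorem~2.2]{hk}, and your sketch faithfully outlines that Hyt\"onen--Kairema construction before deferring to the same reference. If anything, you give more detail than the paper does.
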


Throughout this article, for any $k\in\zz$, define
$$
\cg_k:=\ca_{k+1}\setminus\ca_k\quad\text{and}\quad\cy^k:=\left\{z^{k+1}_\alpha\right\}_{\alpha\in\cg_k}
=: \left\{y_\alpha^k\right\}_{\alpha\in\cg_k}
$$
and, for any $x \in \cx$, define
$$
d(x,\cy^k):=\inf_{y\in\cy^k}d(x,y)\quad\text{and}\quad
V_{\delta^k}(x):=\mu(B(x,\delta^k)).
$$

Now, we recall the notion of approximations of
the identity with exponential decay from \cite{hlyy}.

\begin{definition}\label{exp-ati}
A sequence $\{Q_k\}_{k\in\zz}$ of bounded linear
integral operators on $L^2(\cx)$ is called an
\emph{approximation of the identity with exponential
decay} (for short, exp-ATI) if there exist constants $C$,
$\nu\in(0, \infty)$, $a\in(0, 1]$, and $\eta\in(0,1)$ such that,
for any $k \in\zz$, the kernel of the operator $Q_k$, a
function on $\cx \times \cx$ , which is still denoted by $Q_k$,
satisfies the following conditions:
\begin{enumerate}
\item[{\rm(i)}] (the \emph{identity condition}) $\sum_{k=-\infty}^\infty Q_k=I$
in $L^2(\cx)$, where $I$ denotes the \emph{identity operator} on $L^2(\cx)$;
\item[{\rm(ii)}] (the \emph{size condition}) for any $x,\ y\in\cx$,
\begin{align*}
|Q_k(x,y)|&\leq C \frac{1}{\sqrt{V_{\delta^k}(x)V_{\delta^k}(y)}}H_k(x,y),
\end{align*}
where here, and thereafter,
$$
H_k(x,y):=\exp\left\{-\nu\left[\frac{d(x,y)}{\delta^k}\right]^a\right\}
\exp\left\{-\nu\left[\frac{\max\{d(x,\cy^k),d(y,\cy^k)\}}{\delta^k}\right]^a\right\};
$$
\item[{\rm(iii)}] (the \emph{regularity condition}) for any
$x,\ x',\ y\in\cx$ with $d(x, x')\leq\delta^k$,
\begin{align*}
|Q_k(x,y)-Q_k(x',y)|+|Q_k(y,x)-Q_k(y,x')|
\leq C\left[\frac{d(x,x')}{\delta^k}\right]^\eta
\frac{1}{\sqrt{V_{\delta^k}(x)V_{\delta^k}(y)}}H_k(x,y);
\end{align*}
\item[{\rm(iv)}] (the \emph{second difference regularity condition}) for any
$x,\ x',\ y,\ y'\in\cx$ with $d(x, x')\leq\delta^k$ and $d(y, y')\leq\delta^k$,
\begin{align*}
&|[Q_k(x,y)-Q_k(x',y)]-[Q_k(x,y')-Q_k(x',y')]|\\
&\quad\leq C\left[\frac{d(x,x')}{\delta^k}\right]^\eta\left[\frac{d(y,y')}{\delta^k}\right]^\eta
\frac{1}{\sqrt{V_{\delta^k}(x)V_{\delta^k}(y)}}H_k(x,y);
\end{align*}
\item[{\rm(v)}] (the \emph{cancellation condition}) for any $x,\ y \in \cx$,
$$\int_\cx Q_k(x,y')\,d\mu(y')=0=\int_\cx Q_k(x',y)\,d\mu(x').$$
\end{enumerate}
\end{definition}

The existence of such an exp-ATI on spaces of homogeneous
type is guaranteed by \cite[Theorem 7.1]{ah13} with $\eta$
same as in \cite[Theorem 3.1]{ah13}
which might be very small (see also \cite[Remark~2.8(i)]{hlyy}).
However, if $d$ is a metric,
then $\eta$ can be taken arbitrarily close to 1 (see \cite[Corollary~6.13]{ht14}).

The following lemma states some basic properties of exp-ATIs.
One can find more details in \cite[Remarks 2.8 and 2.9, and
Proposition 2.10]{hlyy}.

\begin{lemma}\label{pro-qk}
Let $\{Q_k\}_{k\in\zz}$ be an {\rm exp-ATI}
and $\eta\in(0,1)$ as in Definition \ref{exp-ati}.
Then, for any given $\Gamma \in (0,\infty)$,
there exists a positive constant $C$ such that, for any $k\in\zz$, the
kernel $Q_k$ has the following properties:
\begin{enumerate}
\item[{\rm(i)}] for any $x,\ y \in\cx$,
\begin{equation}\label{10.23.3}
|Q_k(x,y)|\leq CD_\Gamma(x,y;\delta^k),
\end{equation}
where $D_\Gamma(x,y;\delta^k)$ is as in \eqref{decay};
\item[{\rm(ii)}] for any $x,\ x',\ y\in\cx$ with $d(x,x')\leq(2A_0)^{-1}[\delta^k+d(x,y)]$,
\begin{align}\label{10.23.4}
|Q_k(x,y)-Q_k(x',y)|+|Q_k(y,x)-Q_k(y,x')|
\leq C\left[\frac{d(x,x')}{\delta^k+d(x,y)}\right]^\eta D_\Gamma(x,y;\delta^k);
\end{align}
\item[{\rm(iii)}] for any $x,\ x',\ y,\ y' \in \cx$
with $d(x,x')\leq(2A_0)^{-2}[\delta^k +d(x, y)]$ and
$d(y, y')\leq(2A_0)^{-2}[\delta^k +d(x, y)]$,
\begin{align*}
&|[Q_k(x,y)-Q_k(x',y)]-[Q_k(x,y')-Q_k(x',y')]|\\
&\quad\leq C\left[\frac{d(x,x')}{\delta^k+d(x,y)}\right]^\eta
\left[\frac{d(y,y')}{\delta^k+d(x,y)}\right]^\eta
D_\Gamma(x,y;\delta^k).
\end{align*}
\end{enumerate}
\end{lemma}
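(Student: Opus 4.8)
The plan is to deduce (i)--(iii) from the size, regularity, and second-difference conditions of an exp-ATI (Definition \ref{exp-ati}(ii)--(iv)) by replacing the exponential factor $H_k(x,y)$ with a polynomial one, by using the doubling condition \eqref{eq-doub} to redistribute the volume factors, and by a case analysis on the size of the increments relative to $\delta^k$. The one elementary ingredient used repeatedly is that, for every $N\in(0,\infty)$, there is a constant $C_N\in(0,\infty)$ with $\exp\{-\nu t^a\}\le C_N(1+t)^{-N}$ for all $t\in[0,\infty)$, together with the identity $(1+d(x,y)/\delta^k)^{-N}=[\delta^k/(\delta^k+d(x,y))]^N$ and the trivial bound $H_k(x,y)\le\exp\{-\nu[d(x,y)/\delta^k]^a\}$ (the $\cy^k$-factor being at most $1$).

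For (i), I would start from Definition \ref{exp-ati}(ii) and estimate the volume term using \eqref{eq-doub}: from $B(x,\delta^k)\subset B(y,A_0[\delta^k+d(x,y)])$ one gets $V_{\delta^k}(x)\lesssim(1+d(x,y)/\delta^k)^\omega V_{\delta^k}(y)$ and, likewise, $V_{\delta^k}(x)+V(x,y)\sim\mu(B(x,\delta^k+d(x,y)))\lesssim(1+d(x,y)/\delta^k)^\omega V_{\delta^k}(x)$, whence
\begin{equation*}
\frac{1}{\sqrt{V_{\delta^k}(x)V_{\delta^k}(y)}}\le\frac{C(1+d(x,y)/\delta^k)^{3\omega/2}}{V_{\delta^k}(x)+V(x,y)}.
\end{equation*}
Splitting $\exp\{-\nu[d(x,y)/\delta^k]^a\}$ into a product of two exponentials of the same form, the first absorbs $(1+d(x,y)/\delta^k)^{3\omega/2}$ and the second produces $[\delta^k/(\delta^k+d(x,y))]^\Gamma$; this gives \eqref{10.23.3}.

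For (ii), I would split into $d(x,x')\le\delta^k$ and $\delta^k<d(x,x')\le(2A_0)^{-1}[\delta^k+d(x,y)]$. In the first case I would invoke Definition \ref{exp-ati}(iii), estimate the volume factor as in (i), and absorb the extra power $(1+d(x,y)/\delta^k)^\eta$ arising from $[d(x,x')/\delta^k]^\eta=[d(x,x')/(\delta^k+d(x,y))]^\eta(1+d(x,y)/\delta^k)^\eta$ into the exponential, which yields \eqref{10.23.4}. In the second case I would instead use $|Q_k(x,y)-Q_k(x',y)|\le|Q_k(x,y)|+|Q_k(x',y)|$ and apply \eqref{10.23.3} with $\Gamma$ replaced by $\Gamma+\eta$; since $d(x,x')\le(2A_0)^{-1}[\delta^k+d(x,y)]$ forces $\delta^k+d(x',y)\sim\delta^k+d(x,y)$ and, by \eqref{eq-doub}, $V_{\delta^k}(x')+V(x',y)\sim V_{\delta^k}(x)+V(x,y)$, one obtains
\begin{equation*}
|Q_k(x,y)|+|Q_k(x',y)|\lesssim D_{\Gamma+\eta}(x,y;\delta^k)=\left[\frac{\delta^k}{\delta^k+d(x,y)}\right]^\eta D_\Gamma(x,y;\delta^k)\le\left[\frac{d(x,x')}{\delta^k+d(x,y)}\right]^\eta D_\Gamma(x,y;\delta^k),
\end{equation*}
the last step using $d(x,x')>\delta^k$. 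The term $|Q_k(y,x)-Q_k(y,x')|$ is handled identically after noting $D_\Gamma(x,y;\delta^k)\sim D_\Gamma(y,x;\delta^k)$.

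For (iii), I would distinguish: (a) $d(x,x')\le\delta^k$ and $d(y,y')\le\delta^k$, where Definition \ref{exp-ati}(iv) is used directly, treating the volume factor and the extra power $(1+d(x,y)/\delta^k)^{2\eta}$ as in (i); (b) $d(x,x')>\delta^k$, where I would rewrite the double increment as $[Q_k(x,y)-Q_k(x,y')]-[Q_k(x',y)-Q_k(x',y')]$, estimate each bracket by part (ii) with $\Gamma$ replaced by $\Gamma+\eta$ — producing the factor $[d(y,y')/(\delta^k+d(x,y))]^\eta[\delta^k/(\delta^k+d(x,y))]^\eta$ — and then replace $[\delta^k/(\delta^k+d(x,y))]^\eta$ by $[d(x,x')/(\delta^k+d(x,y))]^\eta$ since $d(x,x')>\delta^k$; (c) $d(x,x')\le\delta^k<d(y,y')$, which is symmetric to (b), now splitting the double increment as $[Q_k(x,y)-Q_k(x',y)]-[Q_k(x,y')-Q_k(x',y')]$ and differencing in the $x$-slot. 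The routine parts are the decay-beats-polynomial estimate and the volume comparisons from \eqref{eq-doub}; the one place demanding genuine care — and the main obstacle — is tracking the constant $A_0$ in the quasi-triangle inequality when relabeling points, so that the hypotheses $d(x,x'),\,d(y,y')\le(2A_0)^{-2}[\delta^k+d(x,y)]$ in (iii) remain strong enough to guarantee the hypotheses of part (ii) after the substitution $x\mapsto x'$ (which is precisely why the factor $(2A_0)^{-2}$, rather than $(2A_0)^{-1}$, is imposed there).
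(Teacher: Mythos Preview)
Your proposal is correct and follows the standard argument: the paper itself does not prove this lemma but simply refers to \cite[Remarks~2.8 and~2.9, and Proposition~2.10]{hlyy}, and your case analysis (exponential-beats-polynomial for the size estimate; splitting according to whether the increment is $\le\delta^k$ or not, using Definition~\ref{exp-ati}(iii)--(iv) in the former case and the already-established size bound with an extra $\eta$ in the exponent in the latter) is precisely the method used in that reference. Your observation about the role of the $(2A_0)^{-2}$ factor in (iii)---that it survives the substitution $x\mapsto x'$ so as to recover the $(2A_0)^{-1}$ hypothesis of (ii)---is exactly the point.
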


Based on exp-ATIs, we now recall the notions
of Besov and Triebel--Lizorkin spaces on spaces of homogeneous type;
see \cite[Definitions 3.1 and 5.1]{whhy}.

\begin{definition}\label{h}
Let $\beta,\ \gamma \in (0, \eta)$ with $\eta$ as in
Definition \ref{exp-ati}, and
$s\in(-(\beta\wedge\gamma), \beta\wedge\gamma)$.
Let $\{Q_k\}_{k\in\zz}$ be an exp-ATI.
\begin{enumerate}
\item[\rm{(i)}] Let $p\in(p(s,\beta\wedge\gamma),\infty]$,
with $p(s,\beta\wedge\gamma)$ as in \eqref{pseta}, and $q \in (0,\infty]$.
The \emph{homogenous Besov space} $\hb$ is defined by setting
$$
\hb := \left\{f  \in\lf(\cgg\r)' :\  \|f\|_{\hb}<\infty\right\},
$$
where, for any $f\in(\cgg)'$,
$$\|f\|_{\hb}:=\left[\sum_{k=-\fz}^\fz
\delta^{-ksq}\|Q_k(f)\|_{\lp}^q\right]^{1/q}$$
with usual modifications made when $q=\infty$.

\item[\rm{(ii)}] Let $p\in(p(s,\beta\wedge\gamma),\infty)$
and $q \in (p(s,\beta\wedge\gamma),\infty]$.
The \emph{homogenous Triebel--Lizorkin space} $\hf$ is defined by setting
$$
\hf := \left\{f  \in \lf(\cgg\r)' :\  \|f\|_{\hf}<\infty\right\},
$$
where, for any $f\in(\cgg)'$,
$$\|f\|_{\hf}:=\left\|\left[\sum_{k=-\fz}^\fz
\delta^{-ksq}|Q_k(f)|^q\right]^{1/q}\right\|_{\lp}$$
with usual modification made when $q=\infty$.
\end{enumerate}
\end{definition}

The following definition introduces the notion of Triebel--Lizorkin
spaces with $p=\infty$; see \cite[Definition~5.1]{whhy}.

\begin{definition}\label{hfi}
Let $\beta,\ \gamma \in (0, \eta)$,
$s\in(-(\beta\wedge\gamma),\beta\wedge\gamma)$,
and $q\in (p(s,\beta\wedge\gamma),\infty]$ with
$\eta$ as in Definition \ref{exp-ati} and $p(s,\beta\wedge\gamma)$ as in
\eqref{pseta}. Let $\{Q_k\}_{k\in\zz}$ be an exp-ATI.
For any $k\in\zz$ and $\alpha\in \ca_k$,
let $Q_\az^k$ be as in Lemma \ref{2-cube}.
Then the \emph{homogeneous Triebel--Lizorkin space $\hfi$} is defined by setting
\begin{align*}
\hfi := \left\{f  \in \left(\cgg\right)' :\  \|f\|_{\hfi}<\infty\right\},
\end{align*}
where, for any $f  \in (\cgg)'$,
$$\|f\|_{\hfi}:=\sup_{l \in \zz}
\sup_{\alpha\in\ca_l}\left[\frac{1}{\mu(Q_\alpha^l)}\int_{Q_\alpha^l}\sum_{k=l}^\infty\delta^{-ksq}
|Q_k(f)(x)|^q\,d\mu(x)\right]^{1/q}$$
with the usual modification made when $q=\infty$.
\end{definition}

\begin{remark}\label{2.8x}
\begin{enumerate}
\item[{\rm(i)}] In Definition \ref{exp-ati}, we need $\diam \cx=\fz$
to guarantee (v). Observe that it was shown in
\cite[Lemma 5.1]{ny97} (see also \cite[Lemma 8.1]{ah13})
that $\diam \cx=\fz$ implies $\mu(\cx)=\fz$. Therefore,
$\diam \cx=\fz$ if and only if $\mu(\cx)=\fz$ under
the assumptions of this article. Due to this, we always assume
that $\mu(\cx)=\infty$ in Sections 2 and 3.
\item[{\rm(ii)}] In \cite{whhy}, Wang et al.  proved
that $\hb$ and $\hf$ in Definition \ref{h} are independent of the choices of
$\bz$ and $\gz$ as in in Definition \ref{h}, and exp-ATIs
(see \cite[Propositions 3.13 and 3.16]{whhy} for more details).
Besides, it was also shown that $\hfi$ in
Definition \ref{hfi} is independent of the choices of
$\bz$ and $\gz$, and exp-ATIs (see \cite[Propositions 5.4 and 5.5]{whhy}
for more details).
\end{enumerate}
\end{remark}

Now, we introduce the notions of $s$-gradients
and $s$-Haj\l asz gradients on spaces of homogenous
type (see, for instance, \cite[Definition 1.1 and (2.1)]{kyz11}).

\begin{definition}\label{h-gra}
Let $s\in (0,\infty)$ and $u$ be a measurable function on $\cx$.
\begin{enumerate}
\item[{\rm(i)}] A nonnegative function $g$ is called an \emph{$s$-gradient} of $u$
if there exists a set $E\subset \cx$ with $\mu(E)=0$ such that,
for any $x,\ y\in \cx\setminus E$,
\begin{equation*}
|u(x)-u(y)|\leq [d(x,y)]^s[g(x)+g(y)].
\end{equation*}
Denote by $\cd^s(u)$ the collection of all $s$-gradients of $u$.
\item[{\rm(ii)}] A sequence of nonnegative functions,
$\{g_k\}_{k\in\zz}$, is called an \emph{$s$-Haj\l asz gradient} of $u$
if there exists a set $E\subset \cx$ with $\mu(E)=0$ such that,
for any $k\in\zz$ and $x,\ y\in \cx\setminus E$ with $\delta^{k+1}\leq d(x,y)<\delta^k$,
\begin{equation*}
|u(x)-u(y)|\leq [d(x,y)]^s[g_k(x)+g_k(y)].
\end{equation*}
Denote by $\dd^s(u)$ the collection of all $s$-Haj\l asz gradients of $u$.
\end{enumerate}
\end{definition}

Next, we introduce the  notions of homogeneous
Haj\l asz--Sobolev spaces, Haj\l asz--Triebel--Lizorkin spaces,
and Haj\l asz--Besov spaces (see, for instance, \cite[Definitions 1.2 and 2.1]{kyz11}).

\begin{definition}\label{h-s-btl}
Let $s\in(0,\infty)$.
\begin{enumerate}
\item[{\rm(i)}] Let $p\in(0,\infty)$.
The \emph{homogeneous Haj\l asz--Sobolev space}
$\dot{M}^{s,p}(\cx)$ is defined to be the set of all measurable
functions $u$ on $\cx$ such that
$$\|u\|_{\dot{M}^{s,p}(\cx)}:=\inf_{g\in \cd^s(u)} \|g\|_{L^p(\cx)}<\infty.$$
\item[{\rm(ii)}] Let $p\in(0,\infty)$ and $q\in(0,\infty]$.
The \emph{homogeneous Haj\l asz--Triebel--Lizorkin space}
$\dot{M}^{s}_{p,q}(\cx)$ is defined to be the set of all
measurable functions $u$ on $\cx$ such that
$$\|u\|_{\dot{M}^s_{p,q}(\cx)}:=\inf_{\{g_k\}_{k\in\zz}\in \dd^s(u)}
\left\|\left(\sum_{k=-\infty}^\infty g_k^q\right)^{\frac{1}{q}}\right\|_{L^p(\cx)}<\infty$$
with the usual modification made when $q=\infty$.
\item[{\rm(iii)}] Let $q\in(0,\infty)$. The \emph{homogeneous Haj\l asz--Triebel--Lizorkin space}
$\dot{M}^{s}_{\infty,q}(\cx)$ is defined to be the set of all
measurable functions $u$ on $\cx$ such that
$$\|u\|_{\dot{M}^s_{\infty,q}(\cx)}:=\inf_{\{g_k\}_{k\in\zz}\in \dd^s(u)}\sup_{k\in\zz}\sup_{x\in\cx}
\left\{\sum_{j=k}^\infty\frac{1}{\mu(B(x,\delta^k))}\int_{B(x,\delta^k)} [g_j(y)]^q\,d\mu(y)\right\}^{\frac{1}{q}}<\infty.$$
\item[{\rm(iv)}] The \emph{homogeneous Haj\l asz--Triebel--Lizorkin space}
$\dot{M}^{s}_{\infty,\infty}(\cx)$ is defined to be the
set of all measurable functions $u$ on $\cx$ such that
$$\|u\|_{\dot{M}^s_{\infty,\infty}(\cx)}:=\inf_{\{g_k\}_{k\in\zz}\in \dd^s(u)}
\left\|\sup_{k\in\zz}g_k\right\|_{L^\infty(\cx)}<\infty.$$
\item[{\rm(v)}] Let $p,\ q\in(0,\infty]$. The \emph{homogeneous Haj\l asz--Besov space}
$\dot{N}^s_{p,q}(\cx)$ is defined to be the set of all
measurable functions $u$ on $\cx$ such that
$$\|u\|_{\dot{N}^s_{p,q}(\cx)}:=\inf_{\{g_k\}_{k\in\zz}\in \dd^s(u)}
\left[\sum_{k=-\infty}^\infty\|g_k\|_{L^p(\cx)}^q\right]^{\frac{1}{q}}<\infty$$
with the usual modification made when $q=\infty$.
\end{enumerate}
\end{definition}

By Definition \ref{h-s-btl}, it is easy to see the following conclusion.
We omit the details.

\begin{proposition}
Let $s\in(0,\infty)$ and $p\in(0,\infty]$. Then $\dot{M}^s_{p,\infty}(\cx)= \dot{M}^{s,p}(\cx)$.
\end{proposition}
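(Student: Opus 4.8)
The plan is to prove the identity $\dot{M}^s_{p,\infty}(\cx) = \dot{M}^{s,p}(\cx)$ by showing that the two defining infima coincide, which reduces to relating the collection $\cd^s(u)$ of $s$-gradients with the collection $\dd^s(u)$ of $s$-Haj\l asz gradients via the natural correspondence $g \leftrightarrow \{g_k\}_{k\in\zz}$ with $g_k \equiv g$ for all $k$. By Definition~\ref{h-s-btl}(ii) with $q=\infty$, we have
$$
\|u\|_{\dot{M}^s_{p,\infty}(\cx)} = \inf_{\{g_k\}_{k\in\zz}\in\dd^s(u)}
\left\|\sup_{k\in\zz} g_k\right\|_{L^p(\cx)},
$$
so the goal is to compare $\inf_{\{g_k\}\in\dd^s(u)}\|\sup_k g_k\|_{L^p(\cx)}$ with $\inf_{g\in\cd^s(u)}\|g\|_{L^p(\cx)}$.

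For the inequality $\|u\|_{\dot{M}^s_{p,\infty}(\cx)} \le \|u\|_{\dot{M}^{s,p}(\cx)}$, I would take any $g \in \cd^s(u)$ and set $g_k := g$ for every $k\in\zz$. Then for any $k\in\zz$ and any $x,y\in\cx\setminus E$ with $\delta^{k+1}\le d(x,y)<\delta^k$, the $s$-gradient inequality $|u(x)-u(y)|\le [d(x,y)]^s[g(x)+g(y)] = [d(x,y)]^s[g_k(x)+g_k(y)]$ holds, so $\{g_k\}_{k\in\zz}\in\dd^s(u)$. Since $\sup_{k\in\zz} g_k = g$ pointwise, taking the infimum over $g\in\cd^s(u)$ gives this direction.

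For the reverse inequality, I would take any $\{g_k\}_{k\in\zz}\in\dd^s(u)$ (with exceptional set $E$) and define $g := \sup_{k\in\zz} g_k$, a nonnegative measurable function. The claim is that $g\in\cd^s(u)$: given $x,y\in\cx\setminus E$ with $x\ne y$, there is a unique $k\in\zz$ with $\delta^{k+1}\le d(x,y)<\delta^k$ (using $\delta\in(0,1)$ so that the intervals $[\delta^{k+1},\delta^k)$ partition $(0,\infty)$), hence $|u(x)-u(y)|\le [d(x,y)]^s[g_k(x)+g_k(y)] \le [d(x,y)]^s[g(x)+g(y)]$; the case $x=y$ is trivial. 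Therefore $\|g\|_{L^p(\cx)} = \|\sup_k g_k\|_{L^p(\cx)}$ and taking the infimum over $\{g_k\}\in\dd^s(u)$ yields $\|u\|_{\dot{M}^{s,p}(\cx)}\le\|u\|_{\dot{M}^s_{p,\infty}(\cx)}$. The two bounds together give the claimed equality of norms, and hence of spaces. There is essentially no obstacle here — the only points requiring a word of care are that $\delta\in(0,1)$ guarantees every positive distance falls into exactly one dyadic annulus $[\delta^{k+1},\delta^k)$, and that the supremum of a countable family of measurable functions is measurable; both are routine, which is why the paper states the result with the proof omitted.
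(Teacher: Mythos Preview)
Your proof is correct and is precisely the natural argument the paper has in mind: the paper simply states that the result follows directly from Definition~\ref{h-s-btl} and omits the details, and your two-direction comparison via $g\mapsto\{g\}_{k\in\zz}$ and $\{g_k\}_{k\in\zz}\mapsto\sup_{k\in\zz}g_k$ is exactly the expected verification.
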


Next, we recall the notion of weak lower bounds (see, for instance,
\cite[Definition 1.1]{hhhlp20}, \cite[Definition 4.4]{whyy}, and \cite[(2) or (3)]{agh20}).

\begin{definition}\label{lower}
Let $(\cx, d, \mu)$ be a space of
homogeneous type with upper dimension
$\omega$ as in \eqref{eq-doub}.
The measure $\mu$ is said to have a
\emph{weak lower bound} $Q$ with $Q\in(0,\omega]$ if
there exists a positive constant $C$ and a point $x_0\in\cx$ such that, for
any $r\in[1,\infty)$,
\begin{equation*}
\mu(B(x_0,r))\geq Cr^Q.
\end{equation*}
\end{definition}

\begin{remark}
We point out that, in \cite[Definition 4.4]{whyy},  $\mu$ is said to have a
lower bound $Q$ with $Q\in(0,\omega]$ if
there exists a positive constant $C$ such that, for
any $x\in\cx$ and $r\in(0,\infty)$,
$\mu(B(x,r))\geq Cr^Q$. That is why we call it
the weak lower bound in Definition \ref{lower}.
\end{remark}

As the next result illustrates, it follows from the doubling
property of the measure that the weak lower bound and lower bound
conditions are equivalent when $Q=\omega$, where $\omega$
is as in \eqref{eq-doub}.

\begin{proposition}\label{p2.15}
With $\omega$ as in \eqref{eq-doub}, the measure $\mu$ has a weak
lower bound $Q=\omega$ if and only if it has a lower bound $Q=\omega$.
\end{proposition}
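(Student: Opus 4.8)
The forward implication requires no work: if $\mu$ has a lower bound $Q=\omega$, then there is a constant $C\in(0,\infty)$ with $\mu(B(x,r))\ge Cr^\omega$ for all $x\in\cx$ and $r\in(0,\infty)$, and restricting to an arbitrary fixed $x_0\in\cx$ and to $r\in[1,\infty)$ immediately yields the weak lower bound $Q=\omega$. So the content is the converse, which I would prove by a ``transport'' argument from the distinguished point $x_0$ to an arbitrary point $x$, using only the quasi-triangle inequality and the doubling estimate \eqref{eq-doub} (and, notably, no reverse doubling).

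Assume $\mu$ has a weak lower bound $Q=\omega$, witnessed by $C\in(0,\infty)$ and $x_0\in\cx$, so that $\mu(B(x_0,R))\ge CR^\omega$ for all $R\in[1,\infty)$. Fix $x\in\cx$ and $r\in(0,\infty)$, put $D:=d(x,x_0)$, and choose the auxiliary radius $R:=\max\{1,\,D,\,r/A_0\}$. First I would note that, by the quasi-triangle inequality, every $y\in B(x_0,R)$ satisfies $d(x,y)\le A_0[D+d(x_0,y)]<A_0(D+R)$, so $B(x_0,R)\subset B(x,A_0(D+R))$. Next, since $A_0(D+R)\ge A_0R\ge r$, the estimate \eqref{eq-doub} applied to the ball $B(x,r)$ with dilation factor $\lambda=A_0(D+R)/r\ge1$ gives $\mu(B(x,A_0(D+R)))\le C_{(\mu)}[A_0(D+R)/r]^\omega\,\mu(B(x,r))$. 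Chaining these with the weak lower bound on $B(x_0,R)$ produces
\[
CR^\omega\le\mu(B(x_0,R))\le\mu(B(x,A_0(D+R)))\le C_{(\mu)}\left[\frac{A_0(D+R)}{r}\right]^\omega\mu(B(x,r)),
\]
hence $\mu(B(x,r))\ge C\,C_{(\mu)}^{-1}A_0^{-\omega}[R/(D+R)]^\omega r^\omega$. Finally, because $R\ge D$ forces $D+R\le2R$, one has $R/(D+R)\ge1/2$, so $\mu(B(x,r))\ge C\,C_{(\mu)}^{-1}(2A_0)^{-\omega}r^\omega$; the constant $C\,C_{(\mu)}^{-1}(2A_0)^{-\omega}$ depends only on $C$, $C_{(\mu)}$, $A_0$, $\omega$ — in particular not on $x$ or $r$ — which is exactly the lower bound $Q=\omega$.

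The one place needing care is the selection of $R$: it is pulled in three directions at once — it must be $\ge1$ so the weak lower bound is applicable, $\ge r/A_0$ so that shrinking from $B(x,A_0(D+R))$ down to $B(x,r)$ is a genuine instance of \eqref{eq-doub} (dilation $\ge1$), and comparable to $D$ so that the ``off-center loss'' $[R/(D+R)]^\omega$ does not degenerate as $x$ recedes from $x_0$. The choice $R=\max\{1,D,r/A_0\}$ reconciles all three, and after that the rest is bookkeeping; I do not anticipate any serious obstacle, and in particular the argument never invokes any lower (reverse doubling) estimate on $\mu$.
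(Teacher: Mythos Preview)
Your proof is correct and follows essentially the same approach as the paper's: both establish the inclusion $B(x_0,R)\subset B(x,\rho)$ for a suitably large $\rho$ (you take $\rho=A_0(D+R)$, the paper takes $\rho=(2A_0)^k r$ for a well-chosen $k$), then invoke \eqref{eq-doub} to descend from $B(x,\rho)$ to $B(x,r)$ and compare with the weak lower bound on $B(x_0,R)$. Your explicit choice $R=\max\{1,D,r/A_0\}$ is slightly more direct than the paper's, which first picks $R$ large enough that $R>r$ and $B(x,r)\subset B(x_0,R)$ and then selects the minimal $k$ with $2A_0R\le(2A_0)^k r$, but the underlying mechanism is identical.
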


\begin{proof}
Clearly, the lower bound condition implies the weak lower bound condition.
Now, we show the converse. To this end, suppose that the measure $\mu$
has a weak lower bound $Q=\omega$ for
some fixed $x_0\in\cx$. Fix $x\in \cx$ and $r\in(0,\infty)$. Next, choose
$R\in[1,\infty)$ large enough so that $R>r$ and $B(x,r)\subset B(x_0,R)$.
Consider the smallest $k\in\nn$ such that $2A_0R\leq (2A_0)^kr$,
where $A_0\in[1,\infty)$ is the constant in the quasi-triangle inequality.
Note that $k\geq1$ because $r<R$, and hence $(2A_0)^k>1$. Also, this choice of $k$
ensures that $(2A_0)^kr\leq (2A_0)^2R$ which further implies that $B(x_0,R)\subset B(x,(2A_0)^kr)$.
Using this, the weak lower bound $Q=\omega$ for the ball $B(x_0,R)$, the doubling
condition in \eqref{eq-doub2}, and $(2A_0)^kr\leq (2A_0)^2R$, we further conclude that
\begin{equation*}
R^\omega\lesssim\mu(B(x_0,R))\lesssim\mu(B(x,(2A_0)^{k}r))\lesssim (2A_0)^{k\omega}\mu(B(x,r))
\lesssim (2A_0)^{2\omega}\left(\frac{R}{r}\right)^\omega\mu(B(x,r)),
\end{equation*}
from which it follows that $\mu(B(x,r))\gtrsim r^\omega$. Thus, $\mu$ has
a lower bound $Q=\omega$, as wanted. This finishes the proof of Proposition
\ref{p2.15}.
\end{proof}	

Now, we can state our main results of this article.

\begin{theorem}\label{mr}
Let $\beta,\ \gamma\in(0,\eta)$ with $\eta$ as in
Definition \ref{exp-ati}, $s\in(0,\beta\wedge\gamma)$,
$p,\ q$ be as in Definition~\ref{h}, and $\omega$ as in \eqref{eq-doub}.
Assume that the measure $\mu$ of $\cx$ has a weak lower bound $Q=\omega$.
\begin{enumerate}
\item[{\rm(i)}] If $p\in (\omega/(\omega+s),\infty)$ and
$q\in (\omega/(\omega+s),\infty]$, then $\dot{M}^s_{p,q}(\cx)=\hf$.
\item[{\rm(ii)}] If $p\in (\omega/(\omega+s),\infty]$ and $q\in(0,\infty]$,
then $\dot{N}^s_{p,q}(\cx)=\hb$.
\end{enumerate}
\end{theorem}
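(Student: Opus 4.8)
The plan is to reduce the pointwise characterization to the grand-space characterization and the equivalence between Haj\l asz--type spaces and Besov/Triebel--Lizorkin spaces that are already announced in the excerpt (Theorems \ref{btl-gbtl} and \ref{hbtl-gbtl}). Indeed, once one knows that $\dot{M}^s_{p,q}(\cx)$ coincides with the homogeneous Haj\l asz--Triebel--Lizorkin space and that the latter is identified (via the grand Triebel--Lizorkin space) with $\hf$, and similarly on the Besov side, the theorem will follow by simply matching the ranges of the parameters $p$ and $q$. So the first step is to verify that, under the hypothesis that $\mu$ has a weak lower bound $Q=\omega$ (equivalently, by Proposition \ref{p2.15}, a genuine lower bound $\mu(B(x,r))\gtrsim r^\omega$ for all $x\in\cx$ and all $r\in(0,\infty)$), the index $p(s,\beta\wedge\gamma)$ from \eqref{pseta} can be replaced by $\omega/(\omega+s)$: with $s\in(0,\beta\wedge\gamma)$ one has $p(s,\beta\wedge\gamma)=\max\{\omega/(\omega+\beta\wedge\gamma),\omega/(\omega+s)\}=\omega/(\omega+s)$ since $s<\beta\wedge\gamma$. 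This makes the admissible ranges in Definition~\ref{h} (namely $p\in(\omega/(\omega+s),\infty]$ for Besov, and $p,q\in(\omega/(\omega+s),\infty]$ for Triebel--Lizorkin) exactly the ranges appearing in the statement of Theorem~\ref{mr}, so no loss occurs and the two identifications apply verbatim.

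The second step is the substantive one: establishing $\dot{M}^s_{p,q}(\cx)=\hf$ and $\dot{N}^s_{p,q}(\cx)=\hb$. For the inclusion ``Haj\l asz $\subset$ Besov/Triebel--Lizorkin'', I would start from a function $u$ with an $s$-Haj\l asz gradient $\{g_k\}\in\dd^s(u)$ and estimate $Q_k(u)$ pointwise. Using the cancellation condition $\int_\cx Q_k(x,y)\,d\mu(y)=0$, one writes $Q_k(u)(x)=\int_\cx Q_k(x,y)[u(y)-u(x)]\,d\mu(y)$, splits the integral into the dyadic annuli $\{y:\ \delta^{j+1}\le d(x,y)<\delta^j\}$ for $j\ge k$ (together with the region $d(x,y)<\delta^k$ handled by the regularity condition and a telescoping argument), and on each annulus invokes the defining inequality $|u(y)-u(x)|\le[d(x,y)]^s[g_j(x)+g_j(y)]$ together with the size bound $|Q_k(x,y)|\lesssim D_\Gamma(x,y;\delta^k)$ from Lemma~\ref{pro-qk}(i). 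The exponential (or at least $D_\Gamma$) decay in $d(x,y)/\delta^k$ absorbs the factor $[d(x,y)]^s\sim\delta^{js}$, producing
$$
\delta^{-ks}|Q_k(u)(x)|\lesssim\sum_{j\ge k}\delta^{(k-j)(\beta\wedge\gamma-s)}\left[g_j(x)+M(g_j)(x)\right],
$$
a geometric-type convolution in $j$ which, after taking the $\ell^q$ norm in $k$ and then the $L^p$ norm (using the Fefferman--Stein vector-valued maximal inequality for the Triebel--Lizorkin case, where one needs $p,q>\omega/(\omega+s)$ precisely so that the maximal operator acts on the relevant $\ell^q(L^p)$ scale), is controlled by $\|\{g_j\}\|_{\ell^q(L^p)}$; taking the infimum over $\{g_j\}\in\dd^s(u)$ gives $\|u\|_{\hf}\lesssim\|u\|_{\dot{M}^s_{p,q}(\cx)}$, and the analogous (easier, since no vector-valued maximal inequality is needed) argument gives the Besov estimate. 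For the converse inclusion, given $f\in\hf$ (or $\hb$), I would use the Calder\'on reproducing formula from \cite{hlyy} to write $f=\sum_k Q_k(Q_k f)$ — or more precisely $f=\sum_k \widetilde Q_k Q_k f$ with a companion family — represent $f$ as a sum of smooth pieces, and produce a Haj\l asz gradient by setting, for $d(x,y)\sim\delta^k$,
$$
g_k:=\sum_{j\in\zz}\delta^{(j\wedge k-j)(\beta\wedge\gamma)}\,\delta^{-js}\,M\!\left(|Q_j f|\right)
$$
(with the exponent chosen to make the telescoped difference $|f(x)-f(y)|\le[d(x,y)]^s[g_k(x)+g_k(y)]$ hold, after exploiting the smoothness/regularity of the kernels $Q_j$ to control $|Q_jf(x)-Q_jf(y)|$ by $\min\{1,(d(x,y)/\delta^j)^{\eta}\}$ times $M(|Q_jf|)$ at both points); then one checks $\|\{g_k\}\|_{\ell^q(L^p)}\lesssim\|f\|_{\hf}$ by the same geometric-sum-plus-Fefferman--Stein estimate run in reverse.

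The main obstacle I anticipate is \emph{not} the soft reduction in Step~1 but the quantitative control of the constant $A_0$ from the quasi-triangle inequality throughout the Poincar\'e-type and telescoping estimates of Step~2 — exactly the point flagged in the introduction of the excerpt (``the constant $A_0$ \dots\ also brings some difficulty'', forcing the restriction $s\in(0,\beta\wedge\gamma)$ rather than the larger range $s\in(-(\beta\wedge\gamma),\beta\wedge\gamma)$). Concretely, when iterating the pointwise difference $|u(x)-u(y)|$ along a chain of points at scales $\delta^k,\delta^{k-1},\dots$, each application of the quasi-triangle inequality contributes a factor of $A_0$, and one must ensure these powers of $A_0$ are killed by the gain $\delta^{(k-j)(\beta\wedge\gamma-s)}$; this is where the geometric series converges only if $s$ is strictly below $\beta\wedge\gamma$ with room to spare, and it is the reason the admissible smoothness range in the pointwise characterization is a proper subinterval. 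A secondary technical point is that, since we do not assume the reverse doubling condition, one cannot use lower volume bounds of the form $\mu(B(x,\lambda r))\gtrsim\lambda^\kappa\mu(B(x,r))$; the weak lower bound $Q=\omega$ (upgraded to a genuine lower bound via Proposition~\ref{p2.15}) is the substitute, and it must be threaded carefully into every place where such a bound would traditionally appear — in particular in handling the $\dot{M}^s_{\infty,q}(\cx)$ and $\dot{N}^s_{\infty,q}(\cx)$ endpoint cases, where the $L^p$ averages are replaced by Carleson-type averages over dyadic cubes $Q^l_\alpha$ and one needs $\mu(B(x,\delta^k))^{-1}\lesssim\delta^{-k\omega}$ to pass between ball averages and the $\hfi$/$\hbi$ norms of Definitions~\ref{hfi} and \ref{h}.
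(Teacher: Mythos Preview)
Your Step~1 is exactly the paper's proof: Theorem~\ref{mr} is stated there as a direct corollary of Theorems~\ref{btl-gbtl} and~\ref{hbtl-gbtl}, together with the observation that for $s\in(0,\beta\wedge\gamma)$ one has $p(s,\beta\wedge\gamma)=\omega/(\omega+s)$, so the parameter ranges match. That is the entire proof of Theorem~\ref{mr} in the paper.

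Your Step~2 and the obstacle discussion, however, are not part of the proof of Theorem~\ref{mr}; what you are sketching there is a direct argument for $\dot{M}^s_{p,q}(\cx)=\hf$ that bypasses the grand spaces, and this differs from the paper's actual route through the intermediate Theorems~\ref{btl-gbtl} and~\ref{hbtl-gbtl}. In the paper, the inclusion $\dot{M}^s_{p,q}(\cx)\subset\ca\hf$ (Theorem~\ref{hbtl-gbtl}) is obtained by estimating $\sup_{\phi\in\cf_k(x)}|\langle u,\phi\rangle|$, subtracting the ball average $u_{B(x,\delta^k)}$ (not $u(x)$), decomposing into dyadic annuli, and then invoking the Poincar\'e-type Lemmas~\ref{vpc} and~\ref{vpc2} to convert averaged oscillations into averages of the Haj\l asz gradients $g_j$; the converse uses $g_k(x):=\delta^{-ks}\sup_{\phi\in\cf_k(x)}|\langle f,\phi\rangle|$ (the grand supremum, not a maximal function of $Q_jf$) and Lemma~\ref{expfk} to verify the Haj\l asz inequality. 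Your direct approach --- subtracting $u(x)$, using the pointwise Haj\l asz inequality on annuli, and for the converse building $g_k$ out of $M(|Q_jf|)$ --- is plausible and more elementary in spirit, but it would only control $|Q_k(u)(x)|$ rather than the full grand supremum, and so would not recover Theorem~\ref{hbtl-gbtl} as a byproduct. Since Theorem~\ref{mr} is already obtained by citing the two theorems, none of this extra machinery is required here.
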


\section{Relations with homogeneous grand Besov and Triebel--Lizorkin spaces}\label{s2}

Before we prove Theorem \ref{mr}, we need to introduce the notions of
another important spaces, namely, the homogeneous grand Besov
and Triebel--Lizorkin spaces on spaces of homogenous type.

\begin{definition}\label{gbtl}
Let $\eta$ be as in Definition \ref{exp-ati},
$s\in(-\eta,\eta)$, $\beta,\ \gamma\in (0,\eta)$, and $q\in(0,\infty]$.
For any $k\in\zz$ and $x\in\cx$, define
$$\cf_k(x):=\left\{\phi\in\mathring{\cg}_0^\eta(\beta,\gamma):\
\|\phi\|_{\mathring{\cg}(x,\delta^k,\beta,\gamma)}\leq 1\right\}.$$
\begin{enumerate}
\item[\rm{(i)}]
For any given $p\in(0,\infty]$,
the \emph{homogenous grand Besov space}
$\ca\hb$ is defined by setting
$$
\ca\hb := \left\{f  \in\lf(\cgg\r)' :\  \|f\|_{\ca\hb}<\infty\right\},$$
where, for any $f\in (\cgg)'$,
$$\|f\|_{\ca\hb}:=\left[\sum_{k=-\infty}^\infty
\delta^{-ksq}\left\|\sup_{\phi\in\cf_{k}(\cdot)}|\langle f,
\phi\rangle|\right\|_{\lp}^q\right]^{1/q}$$
with usual modification made when $q=\infty$.
\item[\rm{(ii)}]
For any given $p\in(0,\infty)$,
the \emph{homogenous grand Triebel--Lizorkin space}
$\ca\hf$ is defined by setting
$$
\ca\hf := \left\{f  \in \lf(\cgg\r)' :\  \|f\|_{\ca\hf}<\infty\right\}
$$
where, for any $f\in (\cgg)'$,
$$\|f\|_{\ca\hf}:=\left\|\left[\sum_{k=-\infty}^\infty
\delta^{-ksq}\sup_{\phi\in\cf_{k}(\cdot)}|\langle f,\phi\rangle|^q\right]^{1/q}\right\|_{\lp}$$
with usual modification made when $q=\infty$.
\item[\rm{(iii)}]
The \emph{homogenous grand Triebel--Lizorkin space} $\ca\hfi$ is defined by setting
\begin{align*}
\ca\hfi := \left\{f  \in (\cgg)' :\ {} \|f\|_{\ca\hfi}<\infty\right\},
\end{align*}
where, for any $f\in (\cgg)'$,
$$\|f\|_{\ca\hfi}:=\sup_{l \in \zz}
\sup_{\alpha\in\ca_l}\left[\frac{1}{\mu(Q_\alpha^l)}
\int_{Q_\alpha^l}\sum_{k=l}^\infty\delta^{-ksq}
\sup_{\phi\in\cf_{k}(x)}|\langle f,\phi\rangle|^q\,d\mu(x)\right]^{1/q}$$
with usual modification made when $q=\infty$.
\end{enumerate}
\end{definition}

\begin{remark}\label{q-f}
Let $\{Q_k\}_{k\in\zz}$ be an exp-ATI. By \eqref{10.23.3}
and \eqref{10.23.4}, it is easy to  see that,
for any $k\in \zz$ and $x\in\cx$, $Q_k(x,\cdot)\in\cf_k(x)$.
\end{remark}

Now, we establish the relationship between
homogeneous grand Besov
and Triebel--Lizorkin spaces and homogeneous Besov
and Triebel--Lizorkin spaces.

\begin{theorem}\label{btl-gbtl}
Let $\beta,\ \gamma \in (0, \eta)$ with $\eta$ as in
Definition \ref{exp-ati}, and $s\in(-(\beta\wedge\gamma), \beta\wedge\gamma)$.
\begin{enumerate}
\item[{\rm(i)}] If $p$ and $q$ are as in Definition \ref{h}(ii), then $\hf=\ca\hf$.
\item[{\rm(ii)}] If $p$ and $q$ are as in Definition \ref{h}(i), then $\hb=\ca\hb$.
\end{enumerate}
\end{theorem}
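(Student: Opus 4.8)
The plan is to prove both inclusions for each of the two scales. One direction is essentially free: since, by Remark \ref{q-f}, the kernel $Q_k(x,\cdot)$ belongs to $\cf_k(x)$ for every $k\in\zz$ and $x\in\cx$, we have the pointwise bound $|Q_k(f)(x)|=|\langle f,Q_k(x,\cdot)\rangle|\leq\sup_{\phi\in\cf_k(x)}|\langle f,\phi\rangle|$. Feeding this into the defining (quasi-)norms immediately yields $\|f\|_{\hf}\leq\|f\|_{\ca\hf}$ and $\|f\|_{\hb}\leq\|f\|_{\ca\hb}$, so $\ca\hf\subset\hf$ and $\ca\hb\subset\hb$ (with the understanding that a distribution in the grand space lies in $(\cgg)'$ and has finite grand norm, hence finite ordinary norm). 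The real content is the reverse inclusion $\hf\subset\ca\hf$ (resp.\ $\hb\subset\ca\hb$), i.e.\ controlling $\sup_{\phi\in\cf_k(\cdot)}|\langle f,\phi\rangle|$ by the square function (resp.\ sequence) built from an exp-ATI.

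For the reverse direction, the natural tool is the (homogeneous) Calder\'on reproducing formula on spaces of homogeneous type established in \cite{hlyy}, which the ambient paper relies on: write $f=\sum_{j\in\zz}\widetilde{Q}_j Q_j(f)$ in $(\cgg)'$, where $\{\widetilde{Q}_j\}_{j\in\zz}$ is a companion family whose kernels satisfy the same size, regularity, and cancellation estimates as an exp-ATI (up to constants). Then, for any $k\in\zz$, $x\in\cx$, and any test function $\phi\in\cf_k(x)$, one has
$$
\langle f,\phi\rangle=\sum_{j\in\zz}\int_\cx Q_j(f)(y)\,\widetilde{Q}_j^{\,*}\phi(y)\,d\mu(y)
=\sum_{j\in\zz}\int_\cx Q_j(f)(y)\left[\int_\cx\widetilde{Q}_j(z,y)\,\phi(z)\,d\mu(z)\right]d\mu(y).
$$
The key estimate is the almost-orthogonality bound: because $\phi\in\mathring{\cg}_0^\eta(\beta,\gamma)$ with $\|\phi\|_{\mathring{\cg}(x,\delta^k,\beta,\gamma)}\leq1$ has the cancellation, size, and H\"older regularity of a "bump at scale $\delta^k$ centered at $x$", while $\widetilde{Q}_j$ has cancellation and regularity at scale $\delta^j$, a standard second-moment/cancellation argument gives, for some $\varepsilon\in(0,\eta)$ with $\varepsilon>|s|$,
$$
\left|\widetilde{Q}_j^{\,*}\phi(y)\right|\lesssim\delta^{|j-k|\varepsilon}\,D_{\Gamma}\!\left(x,y;\delta^{j\wedge k}\right)
$$
(this is exactly the type of estimate underlying the coincidence, proved in \cite{whhy}, that Definition \ref{h} is independent of the exp-ATI). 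Combining this with the known fact that convolution-type operators with kernels $D_\Gamma(\cdot,\cdot;\delta^{j})$ are dominated by the Hardy--Littlewood maximal operator $M$ (see \eqref{m}), one obtains
$$
\sup_{\phi\in\cf_k(x)}|\langle f,\phi\rangle|\lesssim\sum_{j\in\zz}\delta^{|j-k|\varepsilon}\,M\!\left(|Q_j(f)|\right)(x),
$$
i.e.\ the grand maximal quantity at scale $k$ is bounded by a geometric-convolution average of the maximal functions of the exp-ATI pieces.

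It then remains to push this pointwise inequality through the function-space norms. For the Besov case this is an application of H\"older's (or the discrete Minkowski/convolution) inequality in $\ell^q(\zz)$ together with the $L^p(\cx)$-boundedness of $M$ when $p>1$, and the vector-valued Fefferman--Stein inequality when $p\leq1$ is circumvented by first choosing $\varepsilon$ and an auxiliary exponent $r\in(p(s,\beta\wedge\gamma),\min\{p,q,1\})$ and replacing $M$ by $M_r:=[M(|\cdot|^r)]^{1/r}$, which is bounded on $L^{p/r}$ and on $\ell^{q/r}$-valued $L^{p/r}$; the admissible range of $p$ in Definition \ref{h} is precisely what makes such an $r$ available. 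For the Triebel--Lizorkin case one applies, after the same $M\to M_r$ reduction, the Fefferman--Stein vector-valued maximal inequality in $L^{p/r}(\ell^{q/r})$ (valid since $p/r,q/r>1$), followed by Young's inequality for the $\ell^{q}$-convolution against the summable sequence $(\delta^{|m|\varepsilon})_{m\in\zz}$; this yields $\|f\|_{\ca\hf}\lesssim\|f\|_{\hf}$. The main obstacle is the almost-orthogonality estimate for $\widetilde{Q}_j^{\,*}\phi$ uniformly over all $\phi\in\cf_k(x)$: one must handle the two regimes $j\geq k$ and $j<k$ separately, use the cancellation of $\phi$ against $\widetilde{Q}_j$ in the first and the cancellation of $\widetilde{Q}_j$ against $\phi$ in the second, and—this is where the homogeneous-type (as opposed to RD-space) setting bites—carry out all the resulting kernel integrations using only the doubling property \eqref{eq-doub}, the quasi-triangle inequality with its constant $A_0$, and the decay of $D_\Gamma$, without ever invoking reverse doubling; the bookkeeping of $A_0$ and the smallness of $\delta$ (recall $\delta\leq(2A_0)^{-10}$) is exactly the delicate point, and it forces the condition $s\in(-(\beta\wedge\gamma),\beta\wedge\gamma)$ so that some $\varepsilon\in(|s|,\eta)$ with room to spare can be fixed from the start.
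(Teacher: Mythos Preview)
Your overall strategy is correct and matches the paper's: the easy inclusion follows from Remark \ref{q-f}, and for the reverse inclusion you expand $f$ via a Calder\'on reproducing formula, use an almost-orthogonality estimate between $\widetilde{Q}_j$ and $\phi\in\cf_k(x)$, and close with Fefferman--Stein. The paper proceeds exactly this way, citing \cite[Lemma 3.9]{whhy} for the almost-orthogonality bound and Lemma \ref{fsvv} for the vector-valued maximal inequality.

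There is, however, a genuine gap in your handling of the range $p\leq 1$ (or $q\leq 1$). From the almost-orthogonality estimate and the domination of $D_\Gamma$-integrals by $M$ you correctly obtain
$$
\sup_{\phi\in\cf_k(x)}|\langle f,\phi\rangle|\lesssim\sum_{j\in\zz}\delta^{|j-k|\varepsilon}\,M(|Q_j f|)(x),
$$
but you then assert that one may ``replace $M$ by $M_r$'' with $r<1$. This is not automatic: for $r\in(0,1)$ one has $M_r(g)\leq M(g)$ (by Jensen's inequality for the concave map $t\mapsto t^r$), so the replacement goes the wrong way. For a general nonnegative function $g$ the integral $\int g(y)\,D_\Gamma(x,y;\delta^m)\,d\mu(y)$ is \emph{not} controlled by $[M(g^r)(x)]^{1/r}$ when $r<1$.

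The paper avoids this problem by using the \emph{discrete} reproducing formula of Lemma \ref{crf} rather than the continuous one you invoke. In the discrete formula the integrand is a linear combination of indicators of dyadic cubes $\qa$, and for such atomic sums Lemma \ref{10.18.5} gives directly
$$
\sum_{\alpha,m}\mu(\qa)\,D_\gamma(x,\ya;\delta^{k\wedge l})\,|a_\alpha^{k,m}|
\lesssim \delta^{[k-(k\wedge l)]\omega(1-1/r)}\Bigl[M\Bigl(\sum_{\alpha,m}|a_\alpha^{k,m}|^r\mathbf{1}_{\qa}\Bigr)(x)\Bigr]^{1/r}
$$
for any $r\in(\omega/(\omega+\gamma),1]$. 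The arbitrariness of the sample points $\ya\in\qa$ then lets one pass to $[M(|Q_kf|^r)(x)]^{1/r}$, after which Fefferman--Stein applies with exponents $p/r,\,q/r>1$. In short, the discretization is not cosmetic: it is precisely what produces the sub-unit-exponent maximal bound that your continuous argument lacks. Your proof can be repaired either by switching to Lemma \ref{crf} and Lemma \ref{10.18.5}, or by inserting a Peetre-type maximal function step exploiting the regularity of $Q_jf$ at scale $\delta^j$; as written, the $M\to M_r$ passage is unjustified.
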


To prove Theorem \ref{btl-gbtl}, we need several lemmas.
Let us begin with recalling the following very basic inequality.

\begin{lemma}
For any $\theta\in (0,1]$ and $\{a_j\}_{j\in\nn} \subset \mathbb{C}$,
it holds true that
\begin{equation}\label{r}
\left(\sum_{j=1}^\infty|a_j|\right)^\theta\leq\sum_{j=1}^\infty|a_j|^\theta.
\end{equation}
\end{lemma}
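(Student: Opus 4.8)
The plan is to reduce the claimed inequality to the case of finitely many terms and then exploit the elementary fact that $t\mapsto t^\theta$ lies above $t$ on $[0,1]$ when $\theta\in(0,1]$. First I would fix $N\in\nn$ and set $S_N:=\sum_{j=1}^N|a_j|$. If $S_N=0$ there is nothing to prove, so assume $S_N>0$. For each $j\in\{1,\dots,N\}$ we have $|a_j|/S_N\in[0,1]$, and for $t\in(0,1]$ and $\theta\in(0,1]$ one has $\theta\ln t\ge\ln t$ (since $\ln t\le0$ and $\theta\le1$), hence $t^\theta\ge t$; the case $t=0$ is trivial. Applying this with $t=|a_j|/S_N$ and summing over $j$ gives
\begin{equation*}
\frac{1}{S_N^\theta}\sum_{j=1}^N|a_j|^\theta=\sum_{j=1}^N\left(\frac{|a_j|}{S_N}\right)^\theta\ge\sum_{j=1}^N\frac{|a_j|}{S_N}=1,
\end{equation*}
that is, $\left(\sum_{j=1}^N|a_j|\right)^\theta\le\sum_{j=1}^N|a_j|^\theta$.

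Next I would pass to the limit $N\to\infty$. Since $t\mapsto t^\theta$ is continuous and nondecreasing on $[0,\infty]$ and the partial sums $\sum_{j=1}^N|a_j|$ increase to $\sum_{j=1}^\infty|a_j|$, we have $\left(\sum_{j=1}^N|a_j|\right)^\theta\to\left(\sum_{j=1}^\infty|a_j|\right)^\theta$ in $[0,\infty]$, while $\sum_{j=1}^N|a_j|^\theta\le\sum_{j=1}^\infty|a_j|^\theta$ for every $N$. Combining these with the finite-sum inequality established above yields \eqref{r}, including the degenerate situation in which the left-hand series diverges: then $\theta>0$ forces $\left(\sum_{j=1}^N|a_j|\right)^\theta\to\infty$, and the finite-sum bound forces $\sum_{j=1}^\infty|a_j|^\theta=\infty$ as well.

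There is essentially no serious obstacle here; the only point deserving a moment's care is the limiting step, handled as just indicated. For completeness I note that one could instead argue by induction on the number of terms, using at each step the two-term inequality $(a+b)^\theta\le a^\theta+b^\theta$ for $a,b\ge0$, which follows from the observation that $a\mapsto a^\theta+b^\theta-(a+b)^\theta$ is nonnegative at $a=0$ and has nonnegative derivative $\theta[a^{\theta-1}-(a+b)^{\theta-1}]$ (because $\theta-1\le0$ and $a\le a+b$). I would nonetheless prefer the normalization argument above, since it avoids induction and treats all $N$ uniformly.
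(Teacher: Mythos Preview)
Your proof is correct. The paper itself does not supply a proof of this lemma; it merely records it as a ``very basic inequality'' and moves on. Your normalization argument for finite sums followed by a monotone limit is a clean way to justify it, and the alternative two-term induction you sketch is equally valid.
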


The following lemma contains several basic and very useful estimates
related to $d$ and $\mu$ on $\cx$.
One can find the details in \cite[Lemma 2.1]{hmy08} or \cite[Lemma 2.4]{hlyy}.

\begin{lemma}\label{6.15.1}
Let $\beta,\ \gamma\in(0,\infty)$.
\begin{enumerate}
\item[{\rm(i)}] For any $x,\ y\in \cx$ and $r \in (0, \infty)$,
$V(x, y)\sim V (y, x)$ and
$$V_r(x) + V_r(y) + V (x,y) \sim V_r(x) + V (x,y) \sim V_r(y) + V (x,y) \sim \mu(B(x,r + d(x,y)))$$
and, moreover, if $d(x,y)\leq r$, then $V_r(x)\sim V_r(y)$.
Here the positive equivalence  constants are independent of $x$, $y$, and $r$.
\item[{\rm(ii)}] There exists a positive constant $C$ such that,
for any $x_1 \in \cx$ and $r \in (0, \fz)$,
$$
\int_\cx D_\gamma(x_1,y;r)\,d\mu(y)\leq C;
$$
where, $D_\gamma(x,y;r)$ is as in \eqref{decay}.
\end{enumerate}
\end{lemma}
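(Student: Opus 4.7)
For part (i), the plan is to derive every measure equivalence from a corresponding ball inclusion obtained by the quasi-triangle inequality, and then absorb the resulting radius inflation factor through the doubling bound \eqref{eq-doub}. For $V(x,y)\sim V(y,x)$, I would note that any $z\in B(x,d(x,y))$ satisfies $d(y,z)\leq A_0[d(x,y)+d(x,z)]\leq 2A_0\,d(x,y)$, so $B(x,d(x,y))\subset B(y,2A_0\,d(x,y))$; applying \eqref{eq-doub} with $\lambda=2A_0$ gives $V(x,y)\lesssim V(y,x)$, and the reverse is symmetric. For the chain $V_r(x)+V_r(y)+V(x,y)\sim \mu(B(x,r+d(x,y)))$, the direction $\lesssim$ follows from the trivial inclusions $B(x,r),\,B(x,d(x,y))\subset B(x,r+d(x,y))$ together with $B(y,r)\subset B(x,A_0(r+d(x,y)))$ (again by the quasi-triangle inequality), the last being controlled by doubling. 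For the reverse $\gtrsim$, I would split into cases: if $r\geq d(x,y)$ then $B(x,r+d(x,y))\subset B(x,2r)$, yielding $\mu(B(x,r+d(x,y)))\lesssim V_r(x)$ by doubling, while if $r<d(x,y)$ then $B(x,r+d(x,y))\subset B(x,2d(x,y))$, yielding $\mu(B(x,r+d(x,y)))\lesssim V(x,y)$. The equivalence with $\mu(B(y,r+d(x,y)))$ is then immediate from $V(x,y)\sim V(y,x)$. When $d(x,y)\leq r$, both $B(x,r)\subset B(y,2A_0 r)$ and $B(y,r)\subset B(x,2A_0 r)$, and doubling gives $V_r(x)\sim V_r(y)$.

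For part (ii), I would use the standard annular decomposition $\cx=B(x_1,r)\cup\bigcup_{k=1}^{\infty}A_k$, where $A_k:=B(x_1,2^k r)\setminus B(x_1,2^{k-1}r)$, and bound $D_\gamma(x_1,y;r)$ as in \eqref{decay} on each piece. On $B(x_1,r)$, the factor $r/(r+d(x_1,y))$ is at most $1$, and by (i) we have $V_r(x_1)+V(x_1,y)\sim V_r(x_1)$, so the contribution from the central ball is bounded by $\mu(B(x_1,r))/V_r(x_1)=1$. On $A_k$ with $k\geq 1$, I would use $V(x_1,y)\geq\mu(B(x_1,2^{k-1}r))$ together with $r/(r+d(x_1,y))\leq 2^{1-k}$ to obtain
\[
\int_{A_k}D_\gamma(x_1,y;r)\,d\mu(y)\lesssim 2^{-(k-1)\gamma}\,\frac{\mu(B(x_1,2^k r))}{\mu(B(x_1,2^{k-1}r))}\lesssim 2^{-(k-1)\gamma}C_{(\mu)},
\]
where the final bound comes from one application of the doubling condition. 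Summing the geometric series in $2^{-\gamma}$, which converges because $\gamma>0$, produces an absolute constant depending only on $\gamma$, $C_{(\mu)}$, and $A_0$, as claimed.

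The only care required is the routine bookkeeping of the quasi-triangle constant $A_0$ in each inclusion; since $A_0$ is a fixed finite constant, every inflation of a radius by a factor of $A_0$ or $2A_0$ is absorbed by a single application of \eqref{eq-doub}. There is no genuine obstacle here, only standard geometric-measure-theoretic manipulations on spaces of homogeneous type, and the entire argument is independent of any reverse doubling or metric hypothesis.
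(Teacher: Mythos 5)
Your proof is correct and is exactly the standard argument: the paper does not reproduce a proof of this lemma but cites \cite[Lemma 2.1]{hmy08} and \cite[Lemma 2.4]{hlyy}, where the same reasoning (ball inclusions via the quasi-triangle inequality absorbed by the doubling bound \eqref{eq-doub} for (i), and the dyadic annular decomposition with the decay factor $[r/(r+d(x_1,y))]^\gamma\lesssim 2^{-k\gamma}$ for (ii)) is used. The only step you gloss, namely that $\mu(B(x,r+d(x,y)))\sim\mu(B(y,r+d(x,y)))$ when passing from the pair $(x,y)$ to $(y,x)$, is routine and follows from the same inclusion-plus-doubling device (or from your ``moreover'' statement applied with radius $r+d(x,y)\geq d(x,y)$), so there is no genuine gap.
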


The following homogeneous discrete Calder\'on
reproducing formula was obtained in \cite[Theorem 5.11]{hlyy}.
Let $j_0\in\nn$ be sufficiently large such that $\delta^{j_0}
\leq (2A_0)^{-3}C_0$. Based on Lemma
\ref{2-cube}, for any $k\in\zz$ and $\alpha\in\ca_k$, let
$$
{\mathfrak N}(k,\alpha):=\{\tau\in\ca_{k+j_0}:\  Q_\tau^{k+j_0}\subset Q_\alpha^k\}
$$
and $N(k,\alpha):=\#{\mathfrak N}(k,\alpha)$. From Lemma \ref{2-cube}, it follows that
$N(k,\alpha) \lesssim \delta^{-j_0\omega}$ and $\bigcup_{\tau\in{\mathfrak N}(k,\alpha)}
Q_\tau^{k+j_0}= Q_\alpha^k$. We
rearrange the set $\{Q_\tau^{k+j_0}:\tau\in{\mathfrak N}(k,\alpha)\}$
as $\{\qa\}_{m=1}^{N(k,\alpha)}$. Also, denote by
$\ya$ an arbitrary point in $\qa$ and $z_\alpha^{k,m}$ the ``center" of $\qa$.

\begin{lemma}\label{crf}
Let $\{Q_k\}_{k=-\infty}^\infty$ be an {\rm exp-ATI}
and $\beta,\ \gamma \in (0, \eta)$ with $\eta$ as in Definition \ref{exp-ati}.
For any $k\in\zz$, $\alpha\in\ca_k$, and $m\in\{1,\dots,N(k,\alpha)\}$,
suppose that $\ya$ is an arbitrary point in $\qa$.
Then there exists a sequence $\{\widetilde{Q}_k\}_{k=-\infty}^\infty$ of
bounded linear integral operators on $L^2(\mathcal{X})$ such that,
for any $f \in (\cgg)'$,
$$f(\cdot) = \sum_{k=-\infty}^\infty\sum_{\alpha \in \ca_k}
\sum_{m=1}^{N(k,\alpha)}\mu\left(\qa\right)\widetilde{Q}_k(\cdot,\ya)Q_kf
\left(\ya\right).$$
in $(\cgg)'$.
Moreover, there exists a positive constant $C$,
independent of the choices of both
$\ya$, with $k\in\zz,\ \alpha\in\ca_k$, and $m\in\{1,\dots,N(k,\alpha)\}$,
and $f$, such that,
for any $k\in\zz$,
the kernel of $\widetilde{Q}_k$
satisfies
\begin{enumerate}
\item[{\rm(i)}] for any $x,\ y \in\cx$,
\begin{equation}\label{4.23x}
\left|\widetilde{Q}_k(x,y)\right|\leq CD_\gamma(x,y;\delta^k),
\end{equation}
where $D_\gamma(x,y;\delta^k)$ is as in \eqref{decay};
\item[{\rm(ii)}] for any $x,\ x',\ y\in\cx$ with $d(x,x')\leq(2A_0)^{-1}[\delta^k+d(x,y)]$,
\begin{align}\label{4.23y}
&\left|\widetilde{Q}_k(x,y)-\widetilde{Q}_k(x',y)\right|\leq
C\left[\frac{d(x,x')}{\delta^k+d(x,y)}\right]^\beta
D_\gamma(x,y;\delta^k);
\end{align}
\item[{\rm(iii)}]for any $x\in\mathcal{X}$,
\begin{equation*}
\int_{\mathcal{X}}\widetilde{Q}_k(x,y)\,d\mu(y)=0=
\int_{\mathcal{X}}\widetilde{Q}_k(y,x)\,d\mu(y).
\end{equation*}
\end{enumerate}
\end{lemma}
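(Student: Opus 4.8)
The plan is to derive the homogeneous discrete Calder\'on reproducing formula from the \emph{continuous} one, which is also available in \cite{hlyy}. So the first step is to invoke the continuous Calder\'on reproducing formula: there is a family $\{\widetilde{Q}_k^{(0)}\}_{k\in\zz}$ of bounded linear integral operators on $L^2(\cx)$ whose kernels already satisfy (i)--(iii) of the present lemma and for which $f=\sum_{k\in\zz}\widetilde{Q}_k^{(0)}Q_kf$ holds in $(\cgg)'$. This is itself produced by Coifman's iteration applied to the identity $I=\sum_{k}Q_k$ (Definition~\ref{exp-ati}(i)): one writes $I=\big(\sum_{j}Q_j\big)\big(\sum_{k}Q_k\big)$, splits the double sum into the parts $|j-k|\le L$ and $|j-k|>L$, uses the almost-orthogonality estimates for $Q_jQ_k$ coming from Lemma~\ref{pro-qk} and Lemma~\ref{6.15.1} to see that the off-diagonal part has operator norm $\le C\delta^{L\vp}$ for some $\vp>0$, and then inverts the resulting near-identity operator by a Neumann series, which is where the $\widetilde{Q}_k^{(0)}$ arise.

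For the discretization, fix $j_0\in\nn$ large and, for each $k\in\zz$, decompose $\cx=\bigcup_{\alpha\in\ca_k}\bigcup_{m=1}^{N(k,\alpha)}\qa$ into the level-$(k+j_0)$ dyadic cubes from Lemma~\ref{2-cube}. Writing $Q_kf(y)=\int_{\cx}Q_k(y,z)f(z)\,d\mu(z)$ and, on each cube $\qa$, replacing the running variable $y$ by the point $\ya$ in both $\widetilde{Q}_k^{(0)}(\cdot,y)$ and $Q_k(y,z)$, one obtains
$$f=\sum_{k\in\zz}\sum_{\alpha\in\ca_k}\sum_{m=1}^{N(k,\alpha)}\mu\left(\qa\right)\widetilde{Q}_k^{(0)}(\cdot,\ya)\,Q_kf\left(\ya\right)+Rf=:Sf+Rf,$$
where $R$ collects the errors $\widetilde{Q}_k^{(0)}(x,y)Q_k(y,z)-\widetilde{Q}_k^{(0)}(x,\ya)Q_k(\ya,z)$ for $y\in\qa$. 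Telescoping this difference and invoking the $\beta$-regularity \eqref{4.23y} of $\widetilde{Q}_k^{(0)}$, the $\eta$-regularity of $Q_k$ (Definition~\ref{exp-ati}(iii)), and the bound $\diam\qa\ls\delta^{k+j_0}$ against the ambient scale $\delta^k$, every term of $R$ acquires a factor $\delta^{j_0(\beta\wedge\eta)}$ multiplying a kernel with the decay profile of $D_\gamma(\cdot,\cdot;\delta^k)$; summing in $k$ with the help of Lemma~\ref{6.15.1} and the cancellation of $Q_k$ and of $\widetilde{Q}_k^{(0)}$ shows that $R$ is bounded on $\cgg$, on $(\cgg)'$, and on $L^2(\cx)$ with operator norm at most $C\delta^{j_0\vp}$ for some $\vp>0$. (The constant $A_0$ in Definition~\ref{qa-d} is what forces one to carry the admissible ranges $d(x,\cdot)\le(2A_0)^{-1}[\delta^k+\cdots]$ in these kernel estimates, exactly as in the test-function condition \eqref{t-reg}.) Choosing $j_0$ so large that $C\delta^{j_0\vp}<1$ makes $S=I-R$ invertible.

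It then remains to set $\widetilde{Q}_k:=S^{-1}\widetilde{Q}_k^{(0)}$, i.e. $\widetilde{Q}_k(x,y):=\sum_{l=0}^{\infty}\big(R^{l}[\widetilde{Q}_k^{(0)}(\cdot,y)]\big)(x)$; since $f=S^{-1}(Sf)$ and $S^{-1}$ commutes with the appropriately convergent sum defining $Sf$, the claimed expansion $f=\sum_{k,\alpha,m}\mu(\qa)\widetilde{Q}_k(\cdot,\ya)Q_kf(\ya)$ follows in $(\cgg)'$, and $\widetilde{Q}_k$ is $L^2$-bounded because $R$ is with norm $<1$. I expect the genuine difficulty to be the verification that these redefined $\widetilde{Q}_k$ still satisfy (i)--(iii): the cancellation (iii) is inherited because $R$ maps into functions with vanishing integral (a consequence of the cancellation of $Q_k$ and $\widetilde{Q}_k^{(0)}$), but the size bound \eqref{4.23x} and the $\beta$-Hölder regularity \eqref{4.23y} require that $R$ act boundedly---uniformly in $l$, with geometrically decaying norms---on the class of ``molecules'' having the profile $D_\gamma(\cdot,y;\delta^k)$ together with $\beta$-Hölder control, so that the series for $\widetilde{Q}_k(x,y)$ converges and retains exactly that profile. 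Establishing this molecular (not merely $L^2$) boundedness of the remainder, uniformly across iterates, is the technical heart of the argument.
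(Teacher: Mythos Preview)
The paper does not prove this lemma at all; it simply quotes it as \cite[Theorem~5.11]{hlyy}. Your outline is essentially the strategy carried out there: start from the continuous reproducing formula (itself built by Coifman's iteration and almost-orthogonality), discretize by freezing the integration variable on each subcube $\qa$ of scale $\delta^{k+j_0}$, bound the resulting remainder $R$ by the regularity gain $\delta^{j_0\vp}$, and invert $I-R$ by a Neumann series. You have also correctly identified the genuine technical core, namely that the iterates $R^l$ must be shown to act with geometrically decaying norms not just on $L^2(\cx)$ but on the test-function spaces $\cg(\beta,\gamma)$ themselves (uniformly in the center and scale), so that the series defining $\widetilde{Q}_k(\cdot,y)$ converges in $\cg(y,\delta^k,\beta,\gamma)$ and inherits \eqref{4.23x}--\eqref{4.23y} and the cancellation. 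This is exactly the content of the long kernel estimates in \cite{hlyy}, and your sketch is an accurate roadmap of that argument.
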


We also need the following three lemmas (see,
for instance, \cite[Lemmas 3.5 and 3.6]{whhy}).

\begin{lemma}\label{9.14.1}
Let $\gamma \in (0,\infty)$ and $p \in (\omega/(\omega+\gamma),1]$
with $\omega$ as in \eqref{eq-doub}.
Then there exists a constant $C\in[1,\infty)$ such that,
for any $k,\ k' \in \zz$, $x \in \mathcal{X}$,  and  $\ya \in \qa$ with
$\alpha \in \ca_k$ and $m\in\{1,\dots,N(k,\alpha)\}$,
\begin{align*}
C^{-1}[V_{\delta^{k\wedge k'}}(x)]^{1-p}\le\sum_{\alpha \in \ca_k}
\sum_{m=1}^{N(k,\alpha)}\mu\left(\qa\right)
\left[D_\gamma(x,\ya;\delta^{k\wedge k'})\right]^p
\leq C[V_{\delta^{k\wedge k'}}(x)]^{1-p},
\end{align*}
where $D_\gamma(x,\ya;\delta^{k\wedge k'})$ is as in \eqref{decay}.
\end{lemma}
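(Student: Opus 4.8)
The plan is to replace the discrete sum by an integral over $\cx$ and then evaluate that integral by an annular decomposition around $x$; throughout, write $r:=\delta^{k\wedge k'}$.

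First I would record an elementary ``local constancy'' of $D_\gamma$: for each fixed $A\in(0,\infty)$ there is a positive constant $C_A$ such that $C_A^{-1}D_\gamma(x,y';r)\le D_\gamma(x,y;r)\le C_A D_\gamma(x,y';r)$ whenever $d(y,y')\le Ar$. Indeed, the quasi-triangle inequality gives $r+d(x,y)\sim r+d(x,y')$, and then the doubling condition \eqref{eq-doub2} together with the equivalence $V_r(x)+V(x,y)\sim\mu(B(x,r+d(x,y)))$ from Lemma~\ref{6.15.1}(i) yields the claim. Next, recall that, as $\alpha$ ranges over $\ca_k$ and $m$ over $\{1,\dots,N(k,\alpha)\}$, the cubes $\qa$ enumerate exactly the dyadic family $\{Q_\tau^{k+j_0}:\tau\in\ca_{k+j_0}\}$, which by Lemma~\ref{2-cube}(i) is a measurable partition of $\cx$; moreover $\ya\in\qa$ and, by Lemma~\ref{2-cube}(iii) and $\delta\in(0,1)$ (so that $\delta^{k+j_0}\le\delta^{k\wedge k'}=r$), $\diam\qa\lesssim\delta^{k+j_0}\le r$. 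Combining these two facts, for every $y\in\qa$ one has $D_\gamma(x,\ya;r)\sim D_\gamma(x,y;r)$, whence
$$\mu\left(\qa\right)\left[D_\gamma(x,\ya;r)\right]^p\sim\int_{\qa}\left[D_\gamma(x,y;r)\right]^p\,d\mu(y).$$
Summing over $\alpha$ and $m$ and using that the $\qa$ partition $\cx$, the assertion reduces to
$$\int_\cx\left[D_\gamma(x,y;r)\right]^p\,d\mu(y)\sim[V_r(x)]^{1-p}.$$

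For the lower bound I would restrict this integral to $B(x,r)$: there $r+d(x,y)\in[r,2r)$, so by Lemma~\ref{6.15.1}(i) and \eqref{eq-doub2} one has $V_r(x)+V(x,y)\sim V_r(x)$ and $[r/(r+d(x,y))]^\gamma\sim1$, hence $D_\gamma(x,y;r)\sim[V_r(x)]^{-1}$ on $B(x,r)$ and $\int_{B(x,r)}[D_\gamma(x,y;r)]^p\,d\mu(y)\sim[V_r(x)]^{-p}\mu(B(x,r))=[V_r(x)]^{1-p}$. For the upper bound I would split $\cx\setminus B(x,r)=\bigcup_{j\ge0}[B(x,2^{j+1}r)\setminus B(x,2^jr)]$; on the $j$-th annulus $r+d(x,y)\sim 2^jr$, so by Lemma~\ref{6.15.1}(i) and \eqref{eq-doub2}, $V_r(x)+V(x,y)\sim\mu(B(x,2^jr))$, $[r/(r+d(x,y))]^\gamma\sim 2^{-j\gamma}$, and $\mu(B(x,2^{j+1}r)\setminus B(x,2^jr))\lesssim\mu(B(x,2^jr))$. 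Therefore, since $1-p\ge0$ and, by \eqref{eq-doub}, $\mu(B(x,2^jr))\lesssim 2^{j\omega}V_r(x)$,
$$\int_{B(x,2^{j+1}r)\setminus B(x,2^jr)}\left[D_\gamma(x,y;r)\right]^p\,d\mu(y)\lesssim\left[\mu\left(B(x,2^jr)\right)\right]^{1-p}2^{-j\gamma p}\lesssim[V_r(x)]^{1-p}\,2^{j[\omega(1-p)-\gamma p]},$$
and summing the geometric series over $j\ge0$ converges precisely because $\omega(1-p)-\gamma p<0$, equivalently $p>\omega/(\omega+\gamma)$ --- exactly the hypothesis. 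Adding the $B(x,r)$-contribution gives the upper bound $\lesssim[V_r(x)]^{1-p}$, finishing the proof.

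The only genuinely delicate point is this last step: it is the annular decay exponent $\omega(1-p)-\gamma p$ that forces the restriction $p>\omega/(\omega+\gamma)$, and one must check that the constants appearing in the ``local constancy'' of $D_\gamma$ (into which the quasi-triangle constant $A_0$ enters) are uniform in the scale $r$ and in the cube $\qa$. Everything else is routine bookkeeping with the doubling property.
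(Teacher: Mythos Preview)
Your argument is correct. The paper does not give its own proof of this lemma but merely cites it from \cite{whhy} (Lemmas~3.5 and 3.6 there), so there is no in-paper proof to compare against; your annular-decomposition approach after reducing the discrete sum to the integral $\int_\cx [D_\gamma(x,y;r)]^p\,d\mu(y)$ via the local constancy of $D_\gamma$ on the dyadic cubes $\qa$ is the standard one and is carried out cleanly.
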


\begin{lemma}\label{10.18.5}
Let $\gamma \in (0,\infty)$ and $r \in (\omega/(\omega+\gamma),1]$
with $\omega$ as in
\eqref{eq-doub}. Then there exists a positive constant $C$ such that,
for any $k,\ k' \in \zz$, $x \in \mathcal{X}$, and $a_\alpha^{k,m}\in\cc$
and $\ya \in \qa$ with $\alpha \in \ca_k$ and $m\in\{1,\dots,N(k,\alpha)\}$,
\begin{align*}
&\sum_{\alpha \in \ca_k}\sum_{m=1}^{N(k,\alpha)}\mu\left(\qa\right)
D_\gamma(x,\ya;\delta^{k\wedge k'})
|a_\alpha^{k,m}|\\
&\quad\leq C \delta^{[k-(k\wedge k')]\omega(1-1/r)}\left[M\left(\sum_{\alpha \in \ca_k}
\sum_{m=1}^{N(k,\alpha)}|a_\alpha^{k,m}|^r\mathbf 1_{\qa}\right)(x)\right]^{1/r},\notag
\end{align*}
where $D_\gamma(x,\ya;\delta^{k\wedge k'})$ is as in \eqref{decay}
and $M$ as in \eqref{m}.
\end{lemma}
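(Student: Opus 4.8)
The statement is a routine but delicate estimate of a kind standard in this theory. The plan is to split the sum dyadically according to the size of $d(x,\ya)$ and, on each annular piece, to dominate it by the central Hardy--Littlewood maximal function of $G:=\sum_{\alpha\in\ca_k}\sum_{m=1}^{N(k,\alpha)}|a_\alpha^{k,m}|^r\mathbf 1_{\qa}$; the geometric series arising from the decomposition index will converge precisely because $r>\omega/(\omega+\gamma)$, and the weight $\delta^{[k-(k\wedge k')]\omega(1-1/r)}$ will emerge from comparing the sizes of the level-$(k+j_0)$ dyadic cubes $\qa$ with the annuli at scale $\delta^{k\wedge k'}$.

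Write $l:=k\wedge k'$ and recall that $j_0\in\nn$ was fixed just before Lemma~\ref{crf}. Since $\bigcup_{\tau\in\mathfrak N(k,\alpha)}Q_\tau^{k+j_0}=Q_\alpha^k$ for each $\alpha\in\ca_k$ and $\bigcup_{\alpha\in\ca_k}Q_\alpha^k=\cx$, the family $\{\qa:\alpha\in\ca_k,\ 1\le m\le N(k,\alpha)\}$ is exactly the collection of all level-$(k+j_0)$ dyadic cubes, hence a pairwise disjoint cover of $\cx$, and $G=|a_\alpha^{k,m}|^r$ on $\qa$. For $j\in\zz_+$ set $\mathcal I_0:=\{(\alpha,m):d(x,\ya)<\delta^l\}$ and, for $j\ge1$, $\mathcal I_j:=\{(\alpha,m):\delta^{l-j+1}\le d(x,\ya)<\delta^{l-j}\}$. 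On $\mathcal I_j$ I would establish three facts: (a) by \eqref{decay} and Lemma~\ref{6.15.1}(i), $D_\gamma(x,\ya;\delta^l)\sim\delta^{j\gamma}/\mu(B(x,\delta^{l-j}))$; (b) by Lemma~\ref{2-cube}(iii), the quasi-triangle inequality, and $\delta^{k+j_0}\le\delta^{l-j}$ (valid since $k\ge l$ and $j_0\ge1$), every such $\qa$ lies in a fixed dilate $B(x,\widetilde C\delta^{l-j})$, whence, by disjointness and the doubling condition \eqref{eq-doub}, $\sum_{(\alpha,m)\in\mathcal I_j}\mu(\qa)|a_\alpha^{k,m}|^r\le\int_{B(x,\widetilde C\delta^{l-j})}G\,d\mu\lesssim\mu(B(x,\delta^{l-j}))\,M(G)(x)$; (c) again by Lemma~\ref{2-cube}(iii), the quasi-triangle inequality, and \eqref{eq-doub}, every such $\qa$ obeys $\mu(\qa)\gtrsim\delta^{(k-l+j)\omega}\mu(B(x,\delta^{l-j}))$.

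With (a), (b) and (c) in hand, for $S_j:=\sum_{(\alpha,m)\in\mathcal I_j}\mu(\qa)D_\gamma(x,\ya;\delta^l)|a_\alpha^{k,m}|$ I would invoke the elementary fact that, for $r\in(0,1]$ and nonnegative weights $\lambda_i\ge\lambda_{\min}>0$, one has $\sum_i\lambda_ib_i\le\lambda_{\min}^{1-1/r}(\sum_i\lambda_ib_i^r)^{1/r}$: indeed $1-1/r\le0$ forces $\lambda_i^{1-1/r}\le\lambda_{\min}^{1-1/r}$, after which $\sum_i\lambda_i^{1/r}b_i=\sum_i(\lambda_ib_i^r)^{1/r}\le(\sum_i\lambda_ib_i^r)^{1/r}$ by \eqref{r}. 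Applying this with $\lambda_i=\mu(\qa)$, $b_i=|a_\alpha^{k,m}|$, and $\lambda_{\min}$ as in (c) (noting $1-1/r\le0$ reverses the inequality there), and then inserting (a) and (b), all powers of $\mu(B(x,\delta^{l-j}))$ cancel, leaving $S_j\lesssim\delta^{(k-l)\omega(1-1/r)}\,\delta^{j[\gamma+\omega(1-1/r)]}\,[M(G)(x)]^{1/r}$. Since $\delta\in(0,1)$ and $\gamma+\omega(1-1/r)>0$ is equivalent to $r>\omega/(\omega+\gamma)$, summing $\sum_{j\in\zz_+}S_j$ gives the asserted bound, because $k-l=k-(k\wedge k')$. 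I expect item (c) to be the main obstacle: in a general space of homogeneous type the measures of individual dyadic cubes are not mutually comparable, so the weight $\delta^{[k-(k\wedge k')]\omega(1-1/r)}$ cannot be produced cube by cube but only through this uniform lower bound for all cubes meeting a fixed annulus, and extracting it cleanly requires careful bookkeeping of the constants $A_0$, $c_0$, $C_0$ and $j_0$ through Lemma~\ref{2-cube}(iii) and the quasi-triangle inequality; a secondary, routine matter is checking that every dilated ball appearing above is absorbed into $\mu(B(x,\delta^{l-j}))$ by \eqref{eq-doub} and that the endpoint annulus $j=0$ falls under the same computation with $\delta^{l-0}=\delta^l$.
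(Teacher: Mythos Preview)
The paper does not actually prove this lemma; it merely cites it from \cite[Lemmas~3.5 and~3.6]{whhy}. Your argument is the standard one and is correct: the dyadic annular decomposition in $d(x,\ya)$, together with the three ingredients (a)--(c) and the elementary weighted $\ell^r$--$\ell^1$ inequality you isolate, is exactly how such estimates are established in this literature. The key points---that the cubes $\qa$ meeting the $j$-th annulus are contained in a fixed dilate of $B(x,\delta^{l-j})$, that each such cube has measure $\gtrsim\delta^{(k-l+j)\omega}\mu(B(x,\delta^{l-j}))$ via Lemma~\ref{2-cube}(iii) and doubling, and that the powers of $\mu(B(x,\delta^{l-j}))$ cancel to leave a geometric series in $j$ convergent precisely when $r>\omega/(\omega+\gamma)$---are all handled correctly.
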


The following lemma is the Fefferman--Stein vector-valued maximal inequality
which was established in \cite[Theorem 1.2]{gly09}.
\begin{lemma}\label{fsvv}
Let $p\in(1,\infty)$, $q\in(1,\infty]$, and $M$ be the Hardy--Littlewood
maximal operator on $\mathcal{X}$ as
in \eqref{m}. Then there exists a positive constant $C$
such that, for any sequence $\{f_j\}_{j\in\zz}$ of
measurable functions on $\mathcal{X}$,
$$
\left\|\left\{\sum_{j\in\zz}[M(f_j)]^q\right\}^{1/q}\right\|_{L^p(\mathcal{X})}
\leq C\left\|\left(\sum_{j\in\zz}|f_j|^q\right)^{1/q}\right\|_{L^p(\mathcal{X})}
$$
with the usual modification made when $q = \infty$.
\end{lemma}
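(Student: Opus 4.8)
I would first reduce to the case of finitely many nonzero $f_j$: replacing $\{f_j\}_{j\in\zz}$ by its truncation to $|j|\le N$ does not enlarge the right-hand side, while the corresponding left-hand sides increase to the full one as $N\to\infty$, so by the monotone convergence theorem it suffices to prove the inequality for finitely supported sequences with a constant independent of $N$ and of the $f_j$. The case $q=\infty$ then follows at once from $\sup_j M(f_j)\le M(\sup_j|f_j|)$ and the classical Hardy--Littlewood maximal theorem on $\cx$ (valid since $p>1$), which rests on the basic covering lemma for quasi-metric spaces---where the selected balls must be dilated by a fixed power of $A_0$---the doubling property, and Marcinkiewicz interpolation between the resulting weak type $(1,1)$ bound for $M$ and the trivial $L^\infty$ bound. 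The case $p=q\in(1,\infty)$ follows from Tonelli's theorem and the scalar $L^q$-boundedness of $M$:
\[
\Bigl\|\Bigl\{\sum_j[M(f_j)]^q\Bigr\}^{1/q}\Bigr\|_{L^q(\cx)}^q=\sum_j\|M(f_j)\|_{L^q(\cx)}^q\lesssim\sum_j\|f_j\|_{L^q(\cx)}^q=\Bigl\|\Bigl(\sum_j|f_j|^q\Bigr)^{1/q}\Bigr\|_{L^q(\cx)}^q.
\]

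For $q<p<\infty$, set $r:=p/q\in(1,\infty)$ and argue by duality: choose a nonnegative $g\in L^{r'}(\cx)$ with $\|g\|_{L^{r'}(\cx)}=1$ such that $\bigl\|\sum_j[M(f_j)]^q\bigr\|_{L^r(\cx)}=\sum_j\int_\cx[M(f_j)(x)]^q g(x)\,d\mu(x)$. The crucial ingredient is the scalar Fefferman--Stein weighted estimate on $\cx$: for every nonnegative measurable $w$ and every $t\in(1,\infty)$, $\int_\cx[M(h)(x)]^t w(x)\,d\mu(x)\le C\int_\cx|h(x)|^t M(w)(x)\,d\mu(x)$. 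I would derive it by first proving the weighted weak type bound $w(\{x\in\cx: M(h)(x)>\lambda\})\le C\lambda^{-1}\int_\cx|h|\,M(w)\,d\mu$: for $x$ with $M(h)(x)>\lambda$ there is a ball $B\ni x$ with $\mu(B)<\lambda^{-1}\int_B|h|\,d\mu$; the covering lemma extracts a countable disjoint subfamily $\{B_i\}$ whose fixed dilates $\{CB_i\}$ cover $\{M(h)>\lambda\}$; and since $M(w)(y)\gtrsim\mu(B_i)^{-1}\int_{CB_i}w\,d\mu$ for $y\in B_i$ by the doubling property, one gets $w(CB_i)\lesssim\lambda^{-1}\int_{B_i}|h|\,M(w)\,d\mu$, which is summed over $i$. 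Interpolating this weak type bound (from $L^1(M(w)\,d\mu)$ to $L^{1,\infty}(w\,d\mu)$) with the trivial $L^\infty$ bound yields the displayed inequality. Applying it with $t=q$ and $w=g$, summing over $j$, and then using Hölder's inequality and the $L^{r'}$-boundedness of $M$ (legitimate because $r'>1$), I obtain
\[
\sum_j\int_\cx[M(f_j)]^q g\,d\mu\lesssim\int_\cx\Bigl(\sum_j|f_j|^q\Bigr)M(g)\,d\mu\le\Bigl\|\sum_j|f_j|^q\Bigr\|_{L^r(\cx)}\|M(g)\|_{L^{r'}(\cx)}\lesssim\Bigl\|\Bigl(\sum_j|f_j|^q\Bigr)^{1/q}\Bigr\|_{L^p(\cx)}^q,
\]
which is the asserted bound for $p>q$.

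The remaining case $1<p<q<\infty$ is the hardest. Here I would prove the vector-valued weak type $(1,1)$ inequality
\[
\mu\Bigl(\Bigl\{x\in\cx:\ \Bigl\{\sum_j[M(f_j)(x)]^q\Bigr\}^{1/q}>\lambda\Bigr\}\Bigr)\le\frac{C}{\lambda}\int_\cx\Bigl(\sum_j|f_j(x)|^q\Bigr)^{1/q}\,d\mu(x)
\]
and then reach every $p\in(1,q)$ by applying the Marcinkiewicz interpolation theorem to the subadditive map $\{f_j\}_j\mapsto\{M(f_j)\}_j$ from $L^p(\cx;\ell^q)$ to $L^p(\cx;\ell^q)$, interpolating this endpoint with the strong type $(q,q)$ bound already established. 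The weak type $(1,1)$ bound I would get from a Calderón--Zygmund decomposition on $\cx$, at height $\lambda$, of the scalar function $h:=\bigl(\sum_j|f_j|^q\bigr)^{1/q}\in L^1(\cx)$: this produces an exceptional set $\Omega$, a union of pairwise disjoint dyadic cubes from Lemma \ref{2-cube} with $\mu(\Omega)\lesssim\lambda^{-1}\|h\|_{L^1(\cx)}$, and a splitting $f_j=g_j+b_j$ such that $\bigl(\sum_j|g_j|^q\bigr)^{1/q}\lesssim\lambda$ pointwise with $\bigl\|\bigl(\sum_j|g_j|^q\bigr)^{1/q}\bigr\|_{L^q(\cx)}^q\lesssim\lambda^{q-1}\|h\|_{L^1(\cx)}$---so the good part, on the region where $\bigl(\sum_j[M(g_j)]^q\bigr)^{1/q}>\lambda/2$, is controlled by Chebyshev's inequality and the strong $(q,q)$ bound---while each $b_j$ is supported in $\Omega$ and has vanishing integral over each cube, so that off a fixed $A_0$-dilate of $\Omega$ the size and cancellation of the pieces together with an $\ell^q$-valued Minkowski inequality show $\bigl(\sum_j[M(b_j)]^q\bigr)^{1/q}$ to be integrable with $L^1(\cx)$-norm $\lesssim\lambda^{-1}\|h\|_{L^1(\cx)}$.

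I expect the vector-valued Calderón--Zygmund decomposition of this last step to be the main obstacle: on a general quasi-metric measure space the quasi-triangle constant $A_0$ forces one to enlarge the exceptional set by a power of $A_0$ and to track this through every doubling estimate, and the absence of a genuine triangle inequality means the smallness of $\bigl(\sum_j[M(b_j)]^q\bigr)^{1/q}$ off the enlarged set must be squeezed entirely out of the mean-zero property of the Calderón--Zygmund pieces and the nested structure of the cubes in Lemma \ref{2-cube}. A more compact alternative would be to combine the boundedness of $M$ on $L^t(\cx,w\,d\mu)$ for every $t\in(1,\infty)$ and every Muckenhoupt weight $w\in A_t(\cx)$ with the Rubio de Francia vector-valued extrapolation theorem on spaces of homogeneous type, which yields all cases $p,q\in(1,\infty)$ simultaneously---at the cost of first developing the $A_p(\cx)$ weight theory.
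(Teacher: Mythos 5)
The paper does not prove this lemma; it invokes it as a known result, citing Grafakos, Liu and Yang \cite[Theorem 1.2]{gly09}. So your proposal is a replacement for a citation rather than something to be compared against an in-text argument. Your overall plan -- reduce to finitely supported sequences, dispatch $q=\infty$, $p=q$, $q<p$, and $p<q$ separately, and reach $p<q$ by interpolating a vector-valued weak $(1,1)$ estimate against the strong $L^q(\ell^q)$ bound -- is the classical Fefferman--Stein roadmap, and parts (a)--(d) of your write-up (the reduction, $q=\infty$ via $\sup_j M(f_j)\le M(\sup_j|f_j|)$, $p=q$ by Tonelli, and $q<p$ by duality and the weighted inequality $\int[Mh]^tw\lesssim\int|h|^tMw$, which indeed transfers to $(\cx,d,\mu)$ via the Vitali-type covering lemma) are sound.

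The genuine gap is in the bad-part estimate for $1<p<q$. You write that off a fixed dilate $\Omega^*$ of the exceptional set, ``the size and cancellation of the pieces together with an $\ell^q$-valued Minkowski inequality show $\bigl(\sum_j[M(b_j)]^q\bigr)^{1/q}$ to be integrable with $L^1(\cx)$-norm $\lesssim\lambda^{-1}\|h\|_{L^1}$.'' Both halves of this are problematic. First, the rough Hardy--Littlewood maximal operator sees only $|b_{j,i}|$, not $b_{j,i}$, in its defining integral; the mean-zero property of the Calder\'on--Zygmund pieces is simply invisible to it, in contrast to the situation for a Calder\'on--Zygmund operator with a smooth kernel, where one expands the kernel and exploits $\int b_{j,i}=0$. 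Second, the claimed $L^1$ bound off $\Omega^*$ is false even on $\rr$: take a single index $j$, let $\Omega$ consist of one cube $Q_1=[0,1]$, and let $b_1$ be any function on $Q_1$ with $\int|b_1|=1$; then $M(b_1)(x)\sim|x|^{-1}$ for $|x|$ large, so $\int_{(\Omega^*)^c}M(b_1)\,dx$ diverges logarithmically. The correct way to close the argument is not to integrate $\bigl(\sum_j[M(b_j)]^q\bigr)^{1/q}$ over $(\Omega^*)^c$ at all, but to dominate pointwise: for $x\notin\Omega^*$ and any ball $B\ni x$ meeting some $Q_i$ one has $Q_i\subset CB$ (this is where the quasi-triangle constant $A_0$ enters, through the dilate $\Omega^*$), whence
\begin{equation*}
M(b_j)(x)\lesssim M\Bigl(\textstyle\sum_i\beta_{j,i}\mathbf 1_{Q_i}\Bigr)(x)=:M(\widetilde b_j)(x),\qquad \beta_{j,i}:=\oint_{Q_i}|b_{j,i}|\,d\mu.
\end{equation*}
The averaged function $\widetilde b_j$ is supported in $\Omega$ and, by the $\ell^q$-valued Minkowski integral inequality together with the stopping-time bound on the cubes, satisfies $\bigl(\sum_j\widetilde b_j^{\,q}\bigr)^{1/q}\lesssim\lambda$ pointwise and $\bigl\|\bigl(\sum_j\widetilde b_j^{\,q}\bigr)^{1/q}\bigr\|_{L^q(\cx)}^q\lesssim\lambda^{q-1}\|h\|_{L^1(\cx)}$; Chebyshev and the already-established $L^q(\ell^q)$ boundedness then give $\mu\bigl(\{x\notin\Omega^*:\bigl(\sum_j[M(b_j)]^q\bigr)^{1/q}>\lambda\}\bigr)\lesssim\lambda^{-1}\|h\|_{L^1(\cx)}$. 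No cancellation is needed; the good control on the averages does all the work. Your closing observation that a weighted/Rubio de Francia extrapolation route avoids this issue entirely is correct and is in fact closer in spirit to how this inequality is typically obtained on spaces of homogeneous type.
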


Now, we prove Theorem \ref{btl-gbtl}.

\begin{proof}[Proof of Theorem \ref{btl-gbtl}]
We first show (i).
Assume that $f\in\ca\hf$ and that $\{Q_k\}_{k\in\zz}$ is an exp-ATI.
From Remark \ref{q-f}, we deduce that
$|Q_k(f)|\leq\sup_{\phi\in\cf_{k}(\cdot)}|\langle f,\phi\rangle|$
and hence $\|f\|_{\hf}\leq \|f\|_{\ca\hf}$.

Conversely, assume that $f\in\hf$. By Lemma \ref{crf}, we know that,
for any  $l\in\zz$, $x\in\cx$, and $\phi\in\cf_l(x)$,
$$\langle f,\phi\rangle=\sum_{k=-\infty}^\infty\sum_{\alpha \in \ca_k}
\sum_{m=1}^{N(k,\alpha)}\mu\left(\qa\right)Q_kf
\left(\ya\right)\int_{\cx}\widetilde{Q}_k(z,\ya)\phi(z)\,d\mu(z).$$
Notice that, by an argument similar to that used in the
proof of \cite[Lemma 3.9]{whhy},  we have, for any fixed $\eta'\in(0,\beta\wedge\gamma)$,
$$\left|\int_{\cx}\widetilde{Q}_k(z,\ya)\phi(z)\,d\mu(z)\right|
\lesssim\delta^{|k-l|\eta'}D_\gamma(x,\ya;\delta^{k\wedge l}),$$
where $D_\gamma(x,\ya;\delta^{k\wedge k'})$ is as in \eqref{decay}.
Using this, Lemma \ref{10.18.5}, the arbitrariness of $\ya$,
and choosing $r\in(\omega/(\omega+\gamma), \min\{p,q,1\})$, we obtain
\begin{align*}
|\langle f,\phi\rangle|&\lesssim \sum_{k=-\infty}^\infty\delta^{|k-l|\eta'}
\sum_{\alpha\in\ca_k}\sum_{m=1}^{N(k,\alpha)}
\mu\left(\qa\right)\left|Q_kf\left(\ya\right)\right|D_\gamma(x,\ya;\delta^{k\wedge l})\\
&\lesssim \sum_{k=-\infty}^\infty\delta^{|k-l|\eta'}\delta^{[k-(k\wedge l)]\omega(1-1/r)}
\left[M\left(\sum_{\alpha \in \ca_k}
\sum_{m=1}^{N(k,\alpha)}\left|Q_kf\left(\ya\right)\right|^r\mathbf 1_{\qa}\right)(x)\right]^{1/r}\\
&\lesssim \sum_{k=-\infty}^\infty\delta^{|k-l|\eta'}\delta^{[k-(k\wedge l)]\omega(1-1/r)}
\left[M\left(\left|Q_kf\right|^r\right)(x)\right]^{1/r}.
\end{align*}
By this, we know that
\begin{align*}
\|f\|_{\ca\hf}&\lesssim \left\|\left[\sum_{l=-\infty}^\infty\delta^{(l-k)sq}
\left\{\sum_{k=-\infty}^\infty\delta^{|k-l|\eta'}\delta^{[k-(k\wedge l)]\omega(1-1/r)}\right.\right.\right.\\
&\quad\left.\left.\left.\times\vpz{\sum_{k=-\infty}^\infty}
\left[M\left(\delta^{-ksr}
\left|Q_kf\right|^r\right)\right]^{1/r}\right\}^q\right]^{1/q}\right\|_{L^p(\cx)},
\end{align*}
which, together with the H\"older inequality when $q\in(1,\infty]$,
or \eqref{r} when $q\in(\omega/[\omega+(\beta\wedge\gamma)],1]$, implies that
$$\|f\|_{\ca\hf}\lesssim\left\|\left\{\sum_{k=-\infty}^\infty
\left[M\left(\delta^{-ksr}\left|Q_kf\right|^r\right)\right]^{q/r}\right\}^{1/q}\right\|_{L^p(\cx)}.$$
From this and Lemma \ref{fsvv}, we deduce  that
$$\|f\|_{\ca\hf}\lesssim\|f\|_{\hf}.$$
This finishes the proof of (i).

The proof of (ii) is similar to that of (i) and we omit the details.
\end{proof}

Next, we establish the equivalence between
homogenous
Haj\l asz--Besov spaces and Haj\l asz--Triebel--Lizorkin
spaces, and  homogeneous grand Besov
and Triebel--Lizorkin spaces.

\begin{theorem}\label{hbtl-gbtl}
Let $\beta,\ \gamma\in (0,\eta)$ with $\eta$ as in Definition \ref{exp-ati},
and $s\in(0,\beta\wedge\gamma)$.
Assume that the measure $\mu$ of $\cx$ has a weak lower bound $Q=\omega$.
\begin{enumerate}
\item[{\rm(i)}] If $p\in(\omega/(\omega+s), \infty]$
and $q\in(\omega/(\omega+s), \infty]$, then $\ca\hf=\dot{M}^{s}_{p,q}(\cx)$.
\item[{\rm(ii)}] If  $p\in(\omega/(\omega+s), \infty]$
and $q\in(0, \infty]$, then $\ca\hb=\dot{N}^{s}_{p,q}(\cx)$.
\end{enumerate}

\end{theorem}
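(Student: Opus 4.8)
The plan is to prove the two inclusions in each part separately, treating (i) in detail since (ii) follows by an entirely analogous (in fact simpler, sequence-space-only) argument. I begin with the inclusion $\dot{M}^s_{p,q}(\cx)\hookrightarrow\ca\hf$. Fix $u\in\dot{M}^s_{p,q}(\cx)$ and a near-optimal Haj\l asz gradient $\{g_k\}_{k\in\zz}\in\dd^s(u)$. First I would show that $u$ can be identified with an element of $(\cgg)'$: since $s>0$ and $\mu$ has a weak lower bound (hence, by Proposition \ref{p2.15}, a genuine lower bound $Q=\omega$), one checks using the $s$-gradient estimate on balls that $u$ lies in $L^1_{\mathrm{loc}}$ and pairs continuously against test functions in $\cgg$ — here the vanishing-moment condition on $\mathring{\cg}$ is what kills the constant ambiguity of $u$. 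Then, for each $k\in\zz$, $x\in\cx$, and $\phi\in\cf_k(x)$, I would estimate $|\langle u,\phi\rangle|$ by writing $\langle u,\phi\rangle=\int_\cx[u(z)-u_{B(x,\delta^k)}]\phi(z)\,d\mu(z)$ using $\int\phi\,d\mu=0$, splitting $\cx$ into the annuli $\{z:d(x,z)<\delta^k\}$ and $\{z:\delta^{j+1}\le d(x,z)<\delta^j\}$ for $j\le k-1$, applying the Haj\l asz inequality with index $j$ on the $j$-th annulus together with the decay $|\phi(z)|\lesssim D_\gamma(x,z;\delta^k)$, and invoking Lemma \ref{6.15.1}(ii) to sum. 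This yields the pointwise bound
\begin{equation*}
\sup_{\phi\in\cf_k(x)}|\langle u,\phi\rangle|\lesssim\sum_{j=-\infty}^{k}\delta^{(k-j)s}\delta^{(k-j)\varepsilon}
\left[M\big(g_j^{\,r}\big)(x)\right]^{1/r}
\end{equation*}
for a suitable $r<\min\{p,q,1\}$ and a small $\varepsilon>0$ gained from the gradient decay relative to the test-function decay; multiplying by $\delta^{-ks}$, summing in $q$-norm over $k$, using Hölder (or \eqref{r}) in the convolution in $j$, taking $L^p$ norms, and applying the Fefferman--Stein inequality (Lemma \ref{fsvv}) then gives $\|u\|_{\ca\hf}\lesssim\|\{g_k\}\|$, hence $\|u\|_{\ca\hf}\lesssim\|u\|_{\dot{M}^s_{p,q}(\cx)}$.

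For the reverse inclusion $\ca\hf\hookrightarrow\dot{M}^s_{p,q}(\cx)$, fix $f\in\ca\hf$ and an exp-ATI $\{Q_k\}_{k\in\zz}$; by Remark \ref{q-f}, $|Q_k(f)|\le\sup_{\phi\in\cf_k(\cdot)}|\langle f,\phi\rangle|$, so $\|f\|_{\hf}\le\|f\|_{\ca\hf}<\infty$ and $f\in\hf$. The strategy is to produce, out of the Calder\'on reproducing formula (Lemma \ref{crf}), a concrete Haj\l asz gradient for $f$. Writing $f=\sum_k\wz Q_k f$ with $\wz Q_k f(\cdot)=\sum_{\alpha,m}\mu(\qa)\wz Q_k(\cdot,\ya)Q_kf(\ya)$, I would estimate, for $x,y$ with $\delta^{\ell+1}\le d(x,y)<\delta^\ell$, the difference $|f(x)-f(y)|$ by $\sum_{k}|\wz Q_kf(x)-\wz Q_kf(y)|$, using the smoothness estimate \eqref{4.23y} for $k\le\ell$ (where $d(x,y)\le\delta^\ell$ controls the increment) and the size estimate \eqref{4.23x} together with $|Q_kf(\ya)|\le\sup_{\phi\in\cf_k}|\langle f,\phi\rangle|$ plus Lemma \ref{10.18.5} for $k>\ell$. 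After extracting the factor $d(x,y)^s\sim\delta^{\ell s}$, this produces a candidate gradient of the form
\begin{equation*}
g_\ell(x):=\sum_{k=-\infty}^{\ell}\delta^{(\ell-k)(\beta-s)}\left[M\big(|\delta^{-ks}Q_kf|^{r}\big)(x)\right]^{1/r}
+\sum_{k=\ell+1}^{\infty}\delta^{(k-\ell)(s+\varepsilon)}\left[M\big(|\delta^{-ks}Q_kf|^{r}\big)(x)\right]^{1/r},
\end{equation*}
with $r\in(\omega/(\omega+\gamma),\min\{p,q,1\})$; both exponents are positive because $s<\beta$ and because of the decay margin, so Young's inequality for the convolution in $k$ plus Lemma \ref{fsvv} give $\|\{g_\ell\}\|_{L^p(\ell^q)}\lesssim\|f\|_{\hf}\lesssim\|f\|_{\ca\hf}$, whence $u:=f\in\dot{M}^s_{p,q}(\cx)$ with the right bound. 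The endpoint cases $p=\infty$ and $q=\infty$ are handled by replacing the $L^p(\ell^q)$ bound with the corresponding Carleson-type cube supremum, exactly as in Definition \ref{h-s-btl}(iii)--(iv) and Definition \ref{hfi}; since $\mu$ has a lower bound, the Hardy--Littlewood maximal operator is bounded on the relevant endpoint spaces, so the scheme goes through.

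The main obstacle, as the introduction flags for Lemma \ref{pc}, is the interference of the quasi-triangle constant $A_0$: the Haj\l asz inequality is only available on dyadic annuli $\delta^{j+1}\le d(x,y)<\delta^j$, whereas the natural decomposition coming from test functions and from the cube system involves dilated balls with $A_0$-factors, so one must carefully bound $|u(x)-u_B|$ by a telescoping sum of $|u_{2^iB}-u_{2^{i+1}B}|$-type terms and control the slight mismatch between ``radius $d(x,y)$'' and ``the scale $\delta^\ell$ for which $\delta^{\ell+1}\le d(x,y)<\delta^\ell$'' — this is precisely where the restriction $s<\beta\wedge\gamma$ (rather than merely $s>0$) and the choice of $r$ close enough to $\omega/(\omega+\gamma)$ are forced. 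A secondary technical point is verifying that the $g_\ell$ constructed above genuinely satisfies the almost-everywhere pointwise inequality of Definition \ref{h-gra}(ii), including on the exceptional null set where the reproducing formula is only known in the distributional sense; this is resolved by a Lebesgue-point argument using that $f$, being in $\hf$, has a locally integrable representative.
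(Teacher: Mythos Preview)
Your forward inclusion $\dot{M}^s_{p,q}(\cx)\hookrightarrow\ca\hf$ follows the paper's line closely: subtract the ball average using $\int\phi=0$, decompose into dyadic annuli, and push into a maximal function. The missing ingredient in your sketch is the Poincar\'e-type control of $\oint_B|u-u_B|$ in terms of the Haj\l asz gradient at the correct scale; you cannot simply ``apply the Haj\l asz inequality with index $j$ on the $j$-th annulus'' because points in that annulus need not be at mutual distance $\sim\delta^j$. The paper supplies this via Lemma~\ref{vpc} (for $p,q>1$) and, crucially, Lemma~\ref{vpc2} (for $p\wedge q\le1$), the latter resting on the Sobolev--Poincar\'e Lemma~\ref{pc} --- this, not any telescoping mismatch, is where the $A_0\delta^{p/\omega}<1$ restriction enters. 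Your claimed bound with $[M(g_j^r)]^{1/r}$ for $r<1$ is correct in spirit but needs exactly Lemma~\ref{vpc2} to be justified. The paper then splits into five cases on $(p,q)$; your unified formulation is fine once that lemma is in hand.

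For the reverse inclusion $\ca\hf\hookrightarrow\dot{M}^s_{p,q}(\cx)$ you take a genuinely different route. The paper does \emph{not} use the Calder\'on reproducing formula or the kernels $\widetilde{Q}_k$ at all here. Instead it sets $g_k(x):=\delta^{-ks}\sup_{\phi\in\cf_k(x)}|\langle f,\phi\rangle|$ directly (so that $\|\{g_k\}\|_{L^p(\ell^q)}=\|f\|_{\ca\hf}$ tautologically), takes a 1-exp-ATI $\{Q_k\}$, and observes via Lemma~\ref{expfk} that $\delta^{ks}d(x,y)^{-s}[Q_k(x,\cdot)-Q_k(y,\cdot)]\in\cf_k(x)$ whenever $d(x,y)\le\delta^k$; this immediately gives $|Q_kf(x)-Q_kf(y)|\lesssim d(x,y)^s g_k(x)$. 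A telescoping sum then yields $|f(x)-f(y)|\lesssim\sum_{j\ge k_0}\delta^{js}[g_j(x)+g_j(y)]$, and $h_k:=\sum_{j\ge k}\delta^{(j-k)s}g_j$ is the Haj\l asz gradient. This avoids all the size/smoothness bookkeeping for $\widetilde{Q}_k$ and the two-sided sum in your $g_\ell$. Your approach should also work, but it is heavier and, as written, the low-frequency piece $\sum_{k\le\ell}\delta^{(\ell-k)(\beta-s)}[\cdots]$ requires $s<\beta$ to converge, which you have, but the paper's argument needs no such decay from below.

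Finally, you misplace the role of the weak lower bound. It is \emph{not} needed for boundedness of $M$ (doubling alone suffices). It enters in Lemma~\ref{btl-loc}, which asserts that $f\in\ca\dot F^s_{p,\infty}(\cx)$ has a locally integrable representative; for $p\in(\omega/(\omega+s),1]$ this uses Corollary~\ref{l-past}, i.e.\ the global Sobolev embedding $\dot{M}^{s,p}(\cx)\hookrightarrow L^{p^\ast}(\cx)$ modulo constants, which fails without the lower bound. Your ``Lebesgue-point argument'' is exactly this lemma, and it is not a secondary technical point but the place where the hypothesis on $\mu$ is actually consumed in the reverse inclusion.
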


To prove Theorem \ref{hbtl-gbtl}, we need several lemmas.
The following lemma  was originally shown in \cite[Theorem 8.7]{h00}
when $s=1$ and $A_0=1$.
When $s\in(0,1)$ and $A_0\in(1,\infty)$, we need more
restrictions on $A_0$ and $\delta$.
We borrow some ideas from the proof of \cite[Theorem 8.7]{h00}.
In what follows, for any measurable set $E\subset \cx$ with $\mu(E)>0$,
let
$$\oint_{E}:=\frac{1}{\mu(E)}\int_{E}.$$

\begin{lemma}\label{pc}
Let $s\in(0,\fz)$, $p\in(0,\omega/s)$, and  $p^\ast:=\frac{\omega p}{\omega-sp}$
with $\omega$ as in \eqref{eq-doub}.
If $A_0\delta^{p/\omega}<1$, then there exists a positive
constant $C$ such that, for any $B_0:=B(x_0,r_0)\subset\cx$
with $x_0\in\cx$ and $r_0\in(0,\infty)$,
$u\in\dot{M}^{s,p}(B(x_0,\delta^{-1}r_0))$, and $g\in\cd^s(u)$, one has $u\in L^{p^\ast}(B_0)$ and
\begin{equation}\label{pc-eq}
\inf_{c\in\rr}\left[\oint_{B_0}|u(y)-c|^{p^\ast}\,d\mu(y)\right]^{\frac{1}{p^\ast}}
\leq Cr_0^s\left\{\oint_{\delta^{-1}B_0}[g(y)]^p\,d\mu(y)\right\}^{1/p}.
\end{equation}
\end{lemma}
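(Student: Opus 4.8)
The plan is to prove the Poincaré inequality \eqref{pc-eq} by the standard chain argument of Haj\l asz--Koskela, adapted to the quasi-metric setting. First I would establish the basic Sobolev--Poincaré estimate on a single ball: for any ball $B=B(z,t)$ with $\delta^{-1}B\subset B(x_0,\delta^{-1}r_0)$ and any $c\in\rr$, the truncated quantity $\oint_B|u-c|^{p^\ast}\,d\mu$ is controlled, via the pointwise inequality $|u(x)-u(y)|\le[d(x,y)]^s[g(x)+g(y)]$ valid off a null set, by a sum of terms of the form $t^{sp^\ast}\oint_{\delta^{-1}B}g^{p}\,d\mu$ raised to the appropriate power; here one uses that for $\mu$-a.e.\ $x\in B$ one has $|u(x)-u_{B}|\lesssim t^s\,[M_{\delta^{-1}B}g](x)$ (a maximal function restricted to $\delta^{-1}B$), and then invokes the weak-type $(p,p)$ bound for $M$ together with the weak lower bound $Q=\omega$ of $\mu$ (Definition~\ref{lower} and Proposition~\ref{p2.15}), which upgrades the trivial bound to the sharp Sobolev exponent $p^\ast=\omega p/(\omega-sp)$. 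The integrability $u\in L^{p^\ast}(B_0)$ comes out of this same estimate.

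Next I would run the telescoping/chain argument. Fix $x$ a Lebesgue point of $u$ in $B_0$ and consider the sequence of balls $B_j:=B(x,\delta^{j}r_0)$ for $j\in\zz_+$, so that $B_j\subset B_0$ shrinks to $x$ and, crucially, $\delta^{-1}B_j=B(x,\delta^{j-1}r_0)\subset\delta^{-1}B_0$ for every $j\ge0$. Write
\begin{equation*}
|u(x)-u_{B_0}|\le\sum_{j=0}^{\infty}\left|u_{B_{j+1}}-u_{B_j}\right|
\lesssim\sum_{j=0}^{\infty}\oint_{B_j}\left|u-u_{B_j}\right|\,d\mu,
\end{equation*}
using $\mu(B_j)\sim\mu(B_{j+1})$ from the doubling condition. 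Applying the single-ball estimate to each $\oint_{B_j}|u-u_{B_j}|\,d\mu$ and using the weak lower bound to produce a factor $(\delta^{j}r_0)^{s}$ with the measure normalization, each summand is bounded by a constant times $(\delta^{j}r_0)^{s}\bigl(\oint_{\delta^{-1}B_0}g^{p}\,d\mu\bigr)^{1/p}$ — or, more precisely, by $\delta^{js}r_0^{s}$ times that quantity. The geometric series $\sum_{j\ge0}\delta^{js}$ converges since $s>0$, yielding $|u(x)-u_{B_0}|\lesssim r_0^{s}\bigl(\oint_{\delta^{-1}B_0}g^p\,d\mu\bigr)^{1/p}$ for a.e.\ $x\in B_0$; then raising to the $p^\ast$ power, averaging over $B_0$, and taking $c=u_{B_0}$ gives \eqref{pc-eq}.

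The main obstacle — and the reason the hypothesis $A_0\delta^{p/\omega}<1$ appears — is controlling the interaction between the quasi-triangle constant $A_0$ and the dilation factor $\delta$ in two places: first, in keeping the enlarged balls $\delta^{-1}B_j$ inside $\delta^{-1}B_0$ when one must pass through $A_0$'s in comparing $d$-balls (so that the single-ball Sobolev--Poincaré estimate can legitimately be applied with gradient $g$ on $\delta^{-1}B_0$), and second, in ensuring the geometric decay in the telescoping sum is not destroyed by accumulating powers of $A_0$ at each scale — this is exactly where $A_0\delta^{p/\omega}<1$ (equivalently $A_0<\delta^{-p/\omega}$) guarantees that the scale-by-scale ratio stays below $1$. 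One must also be careful that the chain of balls is genuinely nested in the quasi-metric sense and that the maximal function estimates are applied with the \emph{centered} maximal operator $M$ from \eqref{m}, which is what the weak lower bound interacts with cleanly. I would borrow the structure of the argument in \cite[Theorem~8.7]{h00}, but with these $A_0$- and $\delta$-bookkeeping modifications throughout, and with Proposition~\ref{p2.15} invoked to convert the weak lower bound into the pointwise lower bound $\mu(B(x,r))\gtrsim r^\omega$ that the Sobolev embedding actually uses.
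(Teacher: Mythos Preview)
Your proposal has a genuine gap at its core. The ``single-ball Sobolev--Poincar\'e estimate'' you say you would establish first --- namely
\[
\left(\oint_B |u-u_B|^{p^\ast}\,d\mu\right)^{1/p^\ast}\lesssim t^s\left(\oint_{\delta^{-1}B} g^p\,d\mu\right)^{1/p}
\]
for a generic ball $B$ --- is precisely the content of the lemma itself. You cannot assume it in order to feed it into a chain argument. And the route you sketch for obtaining it (pointwise bound $|u(x)-u_B|\lesssim t^s M_{\delta^{-1}B}g(x)$, then weak-type $(p,p)$ for $M$) only yields $u-u_B\in L^{p,\infty}$, not $L^{p^\ast}$; the weak-type maximal inequality does not by itself upgrade $g\in L^p$ to $Mg\in L^{p^\ast}$. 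You also invoke the weak lower bound $Q=\omega$ and Proposition~\ref{p2.15}, but Lemma~\ref{pc} carries no such hypothesis --- it must hold on an arbitrary space of homogeneous type (subject only to $A_0\delta^{p/\omega}<1$).

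Even if one granted the single-ball estimate, the telescoping step is wrong: from $\oint_{B_j}|u-u_{B_j}|\,d\mu\lesssim(\delta^j r_0)^s(\oint_{\delta^{-1}B_j}g^p)^{1/p}$ you pass to $(\delta^j r_0)^s(\oint_{\delta^{-1}B_0}g^p)^{1/p}$, but averages over the smaller balls $\delta^{-1}B_j$ are not dominated by the average over $\delta^{-1}B_0$. Your conclusion $|u(x)-u_{B_0}|\lesssim r_0^s(\oint_{\delta^{-1}B_0}g^p)^{1/p}$ for a.e.\ $x$ is an $L^\infty$ bound, which is simply false for generic $g\in L^p$.

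The paper's proof (following \cite[Theorem~8.7]{h00}, which is \emph{not} a telescoping-balls argument) works with the level sets $E_k:=\{x\in\delta^{-1}B_0:\,g(x)\le\delta^{-k}\}$. On each $E_k$ the function $u$ is H\"older; one constructs, for each $x_k\in B_0\cap E_k$, a finite chain of points $x_{k-1},\ldots,x_{k_0}$ with $x_{k-i}\in E_{k-i}$ and $d(x_{k-i},x_{k-i-1})<r_{k-i}$, where $r_j\sim[\mu(\delta^{-1}B_0\setminus E_{j-1})]^{1/\omega}$ is controlled via Chebyshev. Summing along the chain estimates $a_k:=\sup_{B_0\cap E_k}|u-\widetilde c|$; then $\int_{B_0}|u-\widetilde c|^{p^\ast}\le\sum_k a_k^{p^\ast}\mu(E_k\setminus E_{k-1})$ closes against $\int g^p$. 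The hypothesis $A_0\delta^{p/\omega}<1$ enters exactly when one iterates the quasi-triangle inequality along the chain $x_k,\ldots,x_{k_0}$ to keep it inside $\delta^{-1}B_0$: the accumulated factor is $\sum_i A_0^i\,\delta^{ip/\omega}$, which must be finite.
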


\begin{proof}
If $\int_{\delta^{-1}B_0}[g(y)]^p\,d\mu(y)=\infty$,
then \eqref{pc-eq} holds true. If
$$\int_{\delta^{-1}B_0}[g(y)]^p\,d\mu(y)=0,$$
then we know that $g(x)=0$ for almost every $x\in \delta^{-1}B_0$ and
hence there exists a $c\in\rr$ such that $u(x)=c$ for almost every $x\in \delta^{-1}B_0$.
Thus, in this case, \eqref{pc-eq} holds true.

In what follows,  we assume that
$$0<\int_{\delta^{-1}B_0}[g(y)]^p\,d\mu(y)<\infty.$$
Note that this implies $g>0$ almost everywhere in $\cx$.
Moreover, we may also assume that, for every $x\in\delta^{-1}B_0$,
\begin{equation}\label{g-low}
g(x)\geq \delta^{1+1/p}\left\{\oint_{\delta^{-1}B_0}[g(y)]^p\,d\mu(y)\right\}^{1/p},
\end{equation}
as otherwise we may replace $g$ by
$\widetilde{g}(x):=g(x)
+\{\oint_{\delta^{-1}B_0}[g(y)]^p\,d\mu(y)\}^{1/p}$
for any $x\in\delta^{-1}B_0$, because
$$\left\{\oint_{\delta^{-1}B_0}[\widetilde{g}(y)]^p\,d\mu(y)\right\}^{1/p}
\lesssim \left\{\oint_{\delta^{-1}B_0}[g(y)]^p\,d\mu(y)\right\}^{1/p}.$$
For any $k\in\zz$, define
$$E_k:=\left\{x\in\delta^{-1}B_0:\ g(x)\leq \delta^{-k}\right\}.$$
It is easy to see that, for any $k\in\zz$,
$E_{k-1}\subset E_{k}$ and
\begin{equation}\label{ek}
\lim_{k\to\infty}\mu(E_k)=\mu(\delta^{-1}B_0).
\end{equation}
Since $g>0$ almost everywhere in $\cx$, we also have
\begin{equation*}
\mu\left(\delta^{-1}B_0\setminus\bigcup_{k\in\zz}(E_k\setminus E_{k-1})\right)=0,
\end{equation*}
which allows us to write
\begin{equation}\label{g}
\int_{\delta^{-1}B_0}[g(y)]^p\,d\mu(y)\sim
\sum_{k=-\infty}^\infty\delta^{-kp}\mu(E_k\setminus E_{k-1}).
\end{equation}
For any $c\in\rr$, if we let $a_k:=\sup_{y\in B_0\cap E_k}|u(y)-c|$,
then $a_k$ is nondecreasing and
\begin{equation}\label{u}
\int_{B_0} |u(y)-c|^{p^\ast}\,d\mu(y)\leq
\sum_{k=-\infty}^\infty a_k^{p^\ast}\mu(B_0\cap[E_k\setminus E_{k-1}]).
\end{equation}
Note that, if $\mu(\delta^{-1}B_0\setminus E_{k-1})=0$,
then $\mu(E_k\setminus E_{k-1})=0$. Thus, to estimate \eqref{g} and \eqref{u},
we only need to consider $k\in\zz$ such that
$\mu(\delta^{-1}B_0\setminus E_{k-1})>0$, which is always
assumed in what follows. Let
\begin{equation}\label{b}
b:=(4A_0)^{-\omega}\delta^\omega r_0^{-\omega}\mu(\delta^{-1}B_0)
\end{equation}
and
$$
r_k:=2b^{-\frac{1}{\omega}}[\mu(\delta^{-1}B_0\setminus E_{k-1})]^{\frac{1}{\omega}}.
$$
Then we know that $r_k\in(0,\infty)$. Moreover, by the Chebyshev inequality, we know that
\begin{equation}\label{ekc}
\mu(\delta^{-1}B_0\setminus E_k)=\mu(\{x\in\delta^{-1}B_0:\ g(x)>\delta^{-k}\})
\leq\delta^{kp}\int_{\delta^{-1}B_0}[g(y)]^p\,d\mu(y),
\end{equation}
which implies that $\lim_{k\to\infty}r_k=0$. Thus, there exists a $k_0\in\zz$,
which will be determined later, such that, for any $k>k_0$,  we can find an $x_k\in B_0$
satisfying $B(x_k,r_k)\subset \delta^{-1}B_0$, where $r_k\leq\delta^{-1}r_0$.
Observe that, by the doubling condition of $\cx$, we can conclude that, for any $k>k_0$,
$\mu(B(x_k,r_k))\geq br_k^{\omega}$.
Combining this and the definition of $r_k$, we find that
$$\mu(B(x_k,r_k))\geq br_k^{\omega}>\mu(\delta^{-1}B_0
\setminus E_{k-1})=\mu(\delta^{-1}B_0)-\mu(E_{k-1}).$$
From this, we deduce that $B(x_k,r_k)\cap E_{k-1}\neq\emptyset$, that is,
there exists an $x_{k-1}\in B(x_k,r_k)\cap E_{k-1}$.
Now, if $B(x_{k-1},r_{k-1})\subset \delta^{-1}B_0$,
then we can repeat the above procedure to find an $x_{k-2}$
such that $x_{k-2}\in B(x_{k-1},r_{k-1})\cap E_{k-2}$.
As a summary, for any $i\in\{1,\dots,k-k_0+1\}$,
if $B(x_{k-i},r_{k-i})\subset \delta^{-1}B_0$,
then we can find an $x_{k-i-1}$ such that
$x_{k-i-1}\in B(x_{k-i},r_{k-i})\cap E_{k-i-1}$.
We now want to determine $k_0$.
Note that, by \eqref{ekc}, $x_k\in B_0$, and
the assumption that $A_0\delta^{p/\omega}<1$,
we know that, for any $y\in B(x_{k_0},r_{k_0})$,
\begin{align*}
d(y,x_0)&\leq A_0[d(y,x_k)+d(x_k, x_0)]< A_0d(y,x_k)+A_0r_0\\
&\leq A_0^2[d(y,x_{k_0})+d(x_{k_0},x_k)]+A_0r_0\\
&\leq A_0^2r_{k_0}+A_0^3[d(x_{k_0},x_{k_0+1})+d(x_{k_0+1},x_k)]+A_0r_0\\
&\leq A_0^2r_{k_0}+A_0^3r_{k_0+1}+\cdots
+A_0^{k-k_0+1}r_{k-1}+A_0^{k-k_0+1}r_{k}
+A_0r_0\\
&\leq 2A_0^{-k_0+3}b^{-1/\omega}
\left\{\int_{\delta^{-1}B_0}[g(z)]^p\,d\mu(z)\right\}^{1/\omega}
\sum_{i=k_0-1}^{k-1}(A_0\delta^{p/\omega})^i+A_0r_0\\
&\leq A_0^2\delta^{(k_0-1)p/\omega}\frac{2b^{-1/\omega}}{1-A_0\delta^{p/\omega}}
\left\{\int_{\delta^{-1}B_0}[g(z)]^p\,d\mu(z)\right\}^{1/\omega}+A_0r_0.
\end{align*}
If
\begin{equation}\label{koi}
A_0^2\delta^{(k_0-1)p/\omega}\frac{2b^{-1/\omega}}{1-A_0\delta^{p/\omega}}
\left\{\int_{\delta^{-1}B_0}[g(z)]^p\,d\mu(z)\right\}^{1/\omega}+A_0r_0\leq \delta^{-1}r_0,
\end{equation}
then we know that, for any $i\in\{1,\dots,k-k_0\}$,
$B(x_{k-i},r_{k-i})\subset \delta^{-1}B_0$.
Observe that \eqref{koi} is equivalent to
\begin{equation}\label{ko}
\delta^{1-k_0}\geq
\left[\frac{2A_0^2}{(1-A_0\delta^{p/\omega})(\delta^{-1}-A_0)}\right]^{\omega/p}
(br_0^\omega)^{-1/p}\left\{\int_{\delta^{-1}B_0}[g(z)]^p\,d\mu(z)\right\}^{1/p}.
\end{equation}
We claim that, if \eqref{ko} holds true,
then, for any $k\geq k_0$, $r_k\leq \delta^{-1}r$.
Indeed, by the definition of $r_k$, \eqref{ekc}, \eqref{ko}, and
the fact that $\delta$ is very small, we conclude that
\begin{align*}
r_k&\leq 2b^{-\frac{1}{\omega}}\left\{\delta^{(k-1)p}\int_{\delta^{-1}B_0}[g(y)]^p\,d\mu(y)\right\}^\frac{1}{\omega}\\
&\leq 2b^{-\frac{1}{\omega}}\delta^{\frac{(k_0-1)p}{\omega}}
\left\{\int_{\delta^{-1}B_0}[g(y)]^p\,d\mu(y)\right\}^{\frac{1}{\omega}}\\
&\leq 2b^{-\frac{1}{\omega}}
\left[\frac{2A_0^2}{(1-A_0\delta^{p/\omega})(\delta^{-1}-A_0)}
\right]^{-1}b^{\frac{1}{\omega}}r_0 \left\{\int_{\delta^{-1}B_0}[g(y)]^p\,d\mu(y)\right\}^{-\frac{1}{\omega}}\\
&\leq \frac{(1-A_0\delta^{p/\omega})(\delta^{-1}-A_0)}{A_0^2}r_0\\
&\leq \delta^{-1}r_0.
\end{align*}
Thus, the above claim holds true.
Observe that \eqref{g-low} implies $E_k=\emptyset$ for $k\in\zz$ small enough.
By this and \eqref{ek}, we conclude that there exists a $\widetilde{k}_0\in\zz$ such that
\begin{equation}\label{tko}
\mu(E_{\widetilde{k}_0-1})<\delta \mu(\delta^{-1}B_0)
\leq\mu(E_{\widetilde{k}_0}).\end{equation}
From this, we deduce that $E_{\widetilde{k}_0}\neq\emptyset$ and, by \eqref{g-low},
we have, for any $x\in E_{\widetilde{k}_0}$,
$$
\delta^{1+1/p}\left\{\oint_{\delta^{-1}B_0}[g(y)]^p\,d\mu(y)\right\}^{1/p}
\leq g(x)\leq \delta^{-\widetilde{k}_0}.
$$
On the other hand, since $\delta$ is very small,
we may assume that $\delta<1/2$. Then, by
\eqref{tko} and \eqref{ekc}, we know that
$$
\delta\mu(\delta^{-1}B_0)<(1-\delta)\mu(\delta^{-1}B_0)<\mu(\delta^{-1}B_0\setminus E_{\widetilde{k}_0-1})
\leq \delta^{(\widetilde{k}_0-1)p}\int_{\delta^{-1}B_0}[g(y)]^p\,d\mu(y).
$$
Combining the above two estimates, we find that
$$
\delta^{1+1/p}\left\{\oint_{\delta^{-1}B_0}[g(y)]^p\,d\mu(y)\right\}^{1/p}
\leq \delta^{-\widetilde{k}_0}
\leq \delta^{-1-1/p}\left\{\oint_{\delta^{-1}B_0}[g(y)]^p\,d\mu(y)\right\}^{1/p}.
$$
Let $l_0$ be the smallest integer such that
$$\delta^{-l_0}>\max\left\{\delta^{-2-1/p}\left[\frac{2A_0^2}
{(1-A_0\delta^{p/\omega})(\delta^{-1}-A_0)}\right]^{\omega/p},1\right\}
\left[\frac{\mu(\delta^{-1}B_0)}{br_0^\omega}\right]^{1/p}$$
and let $k_0:=\widetilde{k}_0+l_0$.
Then we conclude that \eqref{ko} holds true and
\begin{align*}
\delta^{-k_0}&=\delta^{-1}\delta^{-\widetilde{k}_0}\delta^{-(l_0-1)}\\
&\leq \delta^{-1}\delta^{-1-1/p}
\left\{\oint_{\delta^{-1}B_0}[g(y)]^p\,d\mu(y)\right\}^{1/p}\\
&\quad\times\max\left\{\delta^{-2-1/p}\left[\frac{2A_0^2}
{(1-A_0\delta^{p/\omega})(\delta^{-1}-A_0)}\right]^{\omega/p},1\right\}
\left[\frac{\mu(\delta^{-1}B_0)}{br_0^\omega}\right]^{1/p}\\
&\lesssim (br_0^\omega)^{-1/p}
\left\{\int_{\delta^{-1}B_0}[g(z)]^p\,d\mu(z)\right\}^{1/p},
\end{align*}
which, together with \eqref{ko}, implies that
\begin{equation}\label{e-ko}
\delta^{-k_0}\sim(br_0^\omega)^{-1/p}
\left\{\int_{\delta^{-1}B_0}[g(z)]^p\,d\mu(z)\right\}^{1/p}.
\end{equation}
Now, we estimate $a_k$. We consider two cases on $k$.

{\it Case 1)} $k>k_0$. In this case, it suffices to consider
$k>k_0$ such that $E_k\cap B_0\neq\emptyset$.
For any $x_k\in E_k\cap B_0$, choose
$\{x_{k-1},\dots,x_{k_0}\}$ as above.
Then, by $g\in\cd^s(u)$, the definition of $r_k$, \eqref{ekc}, and $p\in(0,\omega/s)$,
we find that, for any $c\in\rr$,
\begin{align*}
|u(x_k)-c|&\leq\sum_{i=0}^{k-k_0-1}|u(x_{k-i})-u(x_{k-i-1})|+|u(x_{k_0})-c|\\
&\leq \sum_{i=0}^{k-k_0-1}[d(x_{k-i},x_{k-i-1})]^s[g(x_{k-i})+g(x_{k-i-1})]+|u(x_{k_0})-c|\\
&\lesssim \sum_{i=0}^{k-k_0-1}\delta^{-k+i}r_{k-i}^s+|u(x_{k_0})-c|\\
&\lesssim b^{-s/\omega}\left\{\int_{\delta^{-1}B_0}[g(z)]^p\,d\mu(z)\right\}^{s/\omega}
\sum_{i=0}^{k-k_0-1}\delta^{-k+i}\delta^{(k-i-1)ps/\omega}+|u(x_{k_0})-c|\\
&\lesssim b^{-s/\omega}
\left\{\int_{\delta^{-1}B_0}[g(z)]^p\,d\mu(z)\right\}^{s/\omega}
\delta^{-(k-1)(1-ps/\omega)}+|u(x_{k_0})-c|,
\end{align*}
which implies that
$$a_k\lesssim b^{-s/\omega}
\left\{\int_{\delta^{-1}B_0}[g(z)]^p\,d\mu(z)\right\}^{s/\omega}
\delta^{-k(1-ps/\omega)}+\sup_{x\in E_{k_0}}|u(x)-c|.$$
Choose $\widetilde{c}\in\rr$ such that $\essinf_{x\in E_{k_0}}|u(x)-\widetilde{c}|=0$.
Then we can find $\{y_j\}_{j\in\nn}\subset E_{k_0}$
such that $\lim_{j\to\infty}|u(y_j)-\widetilde{c}|=0$.
By $g\in\cd^s(u)$ and the definition of $E_{k_0}$,
we have, for any $x\in E_{k_0}$,
\begin{align}\label{ci}
|u(x)-\widetilde{c}|&=\lim_{j\to\infty}|[u(x)-\widetilde{c}]-[u(y_j)-\widetilde{c}]|
=\lim_{j\to\infty}|u(x)-u(y_j)|\notag\\
&\leq \varlimsup_{j\to\infty}[d(x,y_j)]^s[g(x)+g(y_j)]
\leq 2^{s+1}A_0^sr_0^s\delta^{-k_0-s},
\end{align}
which further implies that, for any $k>k_0$,
\begin{equation}\label{aki}
a_k\lesssim b^{-s/\omega}
\left\{\int_{\delta^{-1}B_0}[g(z)]^p\,d\mu(z)\right\}^{s/\omega}
\delta^{-k(1-ps/\omega)}+r_0^s\delta^{-k_0}.
\end{equation}
{\it Case 2)} $k\leq k_0$. In this case, by \eqref{ci}
and the fact that $E_k$ is increasing, we know that
\begin{equation}\label{akii}
a_k=\sup_{y\in B_0\cap E_k}|u(y)-\widetilde{c}|
\leq \sup_{y\in B_0\cap E_{k_0}}|u(y)-\widetilde{c}|
\leq \sup_{y\in E_{k_0}}|u(y)-\widetilde{c}|\lesssim r_0^s\delta^{-k_0},
\end{equation}
where we let $a_k:=0$ if $B_0\cap E_k=\emptyset$.

From \eqref{aki}, \eqref{akii}, \eqref{u}, \eqref{g},
\eqref{e-ko}, and \eqref{b}, we deduce that
\begin{align*}
\int_{B_0} |u(y)-\widetilde{c}|^{p^\ast}\,d\mu(y)
&\leq\sum_{k=-\infty}^\infty a_k^{p^\ast}\mu(B_0\cap[E_k\setminus E_{k-1}])\\
&\lesssim b^{-sp^\ast/\omega}
\left\{\int_{\delta^{-1}B_0}[g(z)]^p\,d\mu(z)\right\}^{sp^\ast/\omega}
\sum_{k=-\infty}^\infty\delta^{-k(1-ps/\omega)p^\ast}\mu(E_k\setminus E_{k-1})\\
&\quad+r_0^{sp^\ast}\delta^{-k_0p^\ast}\mu(B_0)\\
&\lesssim b^{-sp^\ast/\omega}
\left\{\int_{\delta^{-1}B_0}[g(z)]^p\,d\mu(z)\right\}^{sp^\ast/\omega}
\left\{\int_{\delta^{-1}B_0}[g(z)]^p\,d\mu(z)\right\}\\
&\quad +\mu(B_0)r_0^{sp^\ast} (br_0^\omega)^{-p^\ast/p}
\left\{\int_{\delta^{-1}B_0}[g(z)]^p\,d\mu(z)\right\}^{p^\ast/p}\\
&\lesssim \left[1+\frac{\mu(B_0)}{br_0^\omega}\right]b^{-sp^\ast/\omega}
\left\{\int_{\delta^{-1}B_0}[g(z)]^p\,d\mu(z)\right\}^{p^\ast/p}\\
&\lesssim \frac{\mu(B_0)}{br_0^\omega}b^{-sp^\ast/\omega}
\left\{\int_{\delta^{-1}B_0}[g(z)]^p\,d\mu(z)\right\}^{p^\ast/p},
\end{align*}
which implies that
$$\left[\oint_{B_0}|u(y)-\widetilde{c}|^{p^\ast}\,d\mu(y)\right]^{\frac{1}{p^\ast}}
\lesssim (br_0^\omega)^{-1/p^\ast}b^{-s/\omega}
\left\{\int_{\delta^{-1}B_0}[g(y)]^p\,d\mu(y)\right\}^{1/p}.$$
Recalling that $b=(4A_0)^{-\omega}\delta^\omega
r_0^{-\omega}\mu(\delta^{-1}B_0)$, then we conclude that
$$(br_0^\omega)^{-1/p^\ast}b^{-s/\omega}
\lesssim r_0^s[\mu(\delta^{-1}B_0)]^{-1/p}.$$
This finishes the proof of Lemma \ref{pc}.
\end{proof}

\begin{remark}\label{replace}
Let $p^\ast$, $u$, and $B_0$ be as in Lemma \ref{pc}.
If $p^\ast\in[1,\infty)$, then $u\in L^1(B_0)$ and, moreover, the left hand side
of \eqref{pc-eq} can be replaced by
$$\left[\oint_{B_0}|u(y)-u_{B_0}|^{p^\ast}\,d\mu(y)\right]^{\frac{1}{p^\ast}},$$
where
$$u_{B_0}:=\oint_{B_0}u(y)\,d\mu(y).$$
Indeed, we can find a $c_0\in\rr$ such that
$$\left[\oint_{B_0}|u(y)-c_0|^{p^\ast}\,d\mu(y)\right]^{\frac{1}{p^\ast}}
\leq2\inf_{c\in\rr}\left[\oint_{B_0}|u(y)-c|^{p^\ast}\,d\mu(y)\right]^{\frac{1}{p^\ast}}.$$
By the H\"older inequality, we have
\begin{align*}
\left[\oint_{B_0}|u(y)-u_{B_0}|^{p^\ast}\,d\mu(y)\right]^{\frac{1}{p^\ast}}
&=\left[\oint_{B_0}
\left|u(y)-\oint_{B_0}u(z)\,d\mu(z)\right|^{p^\ast}\,d\mu(y)\right]^{\frac{1}{p^\ast}}\\
&\leq \left[\oint_{B_0}\oint_{B_0}
|u(y)-u(z)|^{p^\ast}\,d\mu(z)\,d\mu(y)\right]^{\frac{1}{p^\ast}}\\
&\lesssim \left[\oint_{B_0}|u(y)-c_0|^{p^\ast}\,d\mu(y)\right]^{\frac{1}{p^\ast}}.
\end{align*}
This finishes the proof of the above claim.
\end{remark}

The next result is a consequence of Lemma \ref{pc} and highlights
the fact that functions in $\dot{M}^s_{p,q}(\cx)$ are actually locally
integrable whenever $p\in(\omega/(\omega+s),\infty)$.

\begin{corollary}\label{pc-int}
Let $s\in(0,\infty)$, $q\in(0,\infty]$, and $p\in(\omega/(\omega+s),\infty)$, where $\omega$ as in \eqref{eq-doub}.
Then every function in $\dot{M}^s_{p,q}(\cx)$ is locally integrable on $\cx$.
\end{corollary}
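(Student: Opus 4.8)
The plan is to reduce the global statement to the scale-invariant estimate of Lemma~\ref{pc} applied on a fixed ball, and then to upgrade a $\dot M^s_{p,q}$ bound into a $\dot M^{s,p}$-type bound on large dilates via a telescoping argument over dyadic annuli. First I would fix a function $u\in\dot M^s_{p,q}(\cx)$, a Haj\l asz gradient $\{g_k\}_{k\in\zz}\in\dd^s(u)$ with $\|(\sum_k g_k^q)^{1/q}\|_{L^p(\cx)}$ essentially minimal (when $q=\infty$ take the sup), and set $g:=(\sum_k g_k^q)^{1/q}$ if $q\le p$, or more carefully $g:=(\sum_k g_k^{\min\{p,q\}})^{1/\min\{p,q\}}\in L^p(\cx)$ using \eqref{r} to control it by the defining quantity. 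The key observation is that for any fixed ball $B=B(x_0,r_0)$ there is a single exponent $\widetilde s\in(0,s]$ and a fixed radius comparison so that $\{g_k\}$ restricted to $B$ yields an $\widetilde s$-gradient of $u$ in the sense of Definition~\ref{h-gra}(i): indeed, for $x,y\in B$ with $\delta^{k+1}\le d(x,y)<\delta^k$, one has $d(x,y)\le 2A_0r_0$, so only finitely many scales $k\ge k_B$ (where $\delta^{k_B}\sim r_0$) are relevant, and summing the pointwise inequalities over those scales produces $|u(x)-u(y)|\le [d(x,y)]^{s}\,[G(x)+G(y)]$ with $G:=\sum_{k\ge k_B}g_k\le (\#\{k\})^{1-1/p}(\sum_k g_k^p)^{1/p}\in L^p(\delta^{-1}B)$. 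Hence $u\in\dot M^{s,p}(\delta^{-1}B)$ with $G\in\cd^s(u)$, provided $A_0\delta^{p/\omega}<1$, which holds since $\delta\le(2A_0)^{-10}$.

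Next, with $p\in(\omega/(\omega+s),\infty)$ we have $p<\omega/s$ precisely when $s<\omega(1/p-1)$... — more usefully, choose $p_0\in(\omega/(\omega+s),p]$ small enough that $p_0<\omega/s$ and apply Lemma~\ref{pc} to get, for every ball $B_0$,
\begin{equation*}
\inf_{c\in\rr}\left[\oint_{B_0}|u(y)-c|^{p_0^\ast}\,d\mu(y)\right]^{1/p_0^\ast}
\le C r_0^{\,s}\left\{\oint_{\delta^{-1}B_0}[G(y)]^{p_0}\,d\mu(y)\right\}^{1/p_0},
\end{equation*}
where $p_0^\ast=\omega p_0/(\omega-sp_0)\ge 1$; in fact choosing $p_0$ even smaller we may arrange $p_0^\ast\ge1$ comfortably, so by Remark~\ref{replace} the constant $c$ can be taken to be $u_{B_0}$ and $u\in L^1(B_0)$. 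Since $B_0$ was arbitrary and every compact subset of $\cx$ is covered by finitely many such balls, this gives $u\in L^1_{\loc}(\cx)$, which is the assertion.

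The main obstacle — really the only nontrivial point — is verifying the hypotheses needed to invoke Lemma~\ref{pc}: one must check that the relevant $G$ (built from $\{g_k\}$) genuinely lies in $\dot M^{s,p}$ on the dilated ball $B(x_0,\delta^{-1}r_0)$ and that $G\in L^{p_0}$ there, which requires passing from the $L^p$-control of $(\sum_k g_k^q)^{1/q}$ to $L^{p_0}$-control of a finite sum $\sum_{k_B\le k\le k_B'} g_k$ on a bounded set; this uses the finiteness of the index range (so Hölder's inequality applies with a harmless constant depending on $r_0$) together with $L^p\subset L^{p_0}_{\loc}$ since $p_0\le p$. One also has to make sure the condition $A_0\delta^{p_0/\omega}<1$ is met, which is immediate from the smallness of $\delta$; and that $p_0<\omega/s$, which is arranged by taking $p_0$ sufficiently close to $\omega/(\omega+s)$, for which $p_0^\ast\to1^+$, exactly the borderline guaranteeing integrability. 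No lower-bound hypothesis on $\mu$ is needed here, consistent with the remark preceding the corollary.
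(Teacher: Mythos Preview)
Your overall framework---pick a small exponent $p_0\in(\omega/(\omega+s),\,p\wedge(\omega/s))$, apply Lemma~\ref{pc} on an arbitrary ball to land in $L^{p_0^\ast}(B_0)\subset L^1(B_0)$---is exactly what the paper does. The gap is in how you manufacture an $s$-gradient from the sequence $\{g_k\}$.

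You claim that for $x,y$ in a fixed ball $B$ ``only finitely many scales $k\ge k_B$ are relevant'' and then bound $G:=\sum_{k\ge k_B}g_k$ by $(\#\{k\})^{1-1/p}(\sum_k g_k^p)^{1/p}$ via H\"older over a finite index set. This is incorrect: the constraint $d(x,y)\le 2A_0r_0$ gives only a \emph{lower} bound $k\ge k_B$ on the relevant scale, but as $x,y\in B$ can be arbitrarily close, $k$ ranges over all of $\{k_B,k_B+1,\dots\}$, an infinite set. So $\#\{k\}=\infty$ and the H\"older step collapses. Your later remark about ``finiteness of the index range'' has the same problem.

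The fix is much simpler than your construction and is essentially contained in your own first sentence. For any pair $x,y$ (outside the null set) there is exactly \emph{one} $k$ with $\delta^{k+1}\le d(x,y)<\delta^k$, and for that $k$ one has $g_k\le\bigl(\sum_j g_j^q\bigr)^{1/q}=:g$ pointwise. Hence $|u(x)-u(y)|\le[d(x,y)]^s[g_k(x)+g_k(y)]\le[d(x,y)]^s[g(x)+g(y)]$, so the single function $g$ is a global $s$-gradient of $u$ and $g\in L^p(\cx)$ by the very definition of $\dot M^s_{p,q}(\cx)$. There is no need to sum over scales or to localize the gradient construction to $B$. From here your second paragraph goes through verbatim: $g\in L^p(\cx)\subset L^{p_0}_{\loc}(\cx)$, so $u\in\dot M^{s,p_0}(\delta^{-1}B_0)$, and Lemma~\ref{pc} (with the smallness of $\delta$ ensuring $A_0\delta^{p_0/\omega}<1$) gives $u\in L^{p_0^\ast}(B_0)\subset L^1(B_0)$.
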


\begin{proof}
Fix $u\in\dot{M}^s_{p,q}(\cx)$ and observe that, if $\{g_k\}_{k\in\zz}\in\dd^s(u)$, then $g\in\cd^s(u)$, where
$$g:=\left(\sum\limits_{k=-\infty}^\infty g_k^q\right)^{1/q}$$
with the usual modification when $q=\infty$. Thus, $u\in\dot{M}^{s,p}(\cx)$.
Consider any ball  $B_0\subset\cx$ and suppose that $A_0\delta^{p/\omega}<1$.
If we choose $t\in(\omega/(\omega+s),p\wedge(\omega/s))$, then $u\in\dot{M}^{s,t}(\delta^{-1}B_0)$
and Lemma~\ref{pc} implies $u\in L^{t^\ast}(B_0)$, where $t^\ast=\frac{\omega t}{\omega-st}>1$.
Thus, $u\in L^1(B_0)$, which completes the proof of Corollary \ref{pc-int}.
\end{proof}
	
The following result also follows from Lemma \ref{pc}.

\begin{corollary}\label{l-past}
Let $s\in(0,\fz)$, $p\in(\omega/(\omega+s),\omega/s)$,
and  $p^\ast:=\frac{\omega p}{\omega-sp}$ with $\omega$ as in \eqref{eq-doub}.
Assume that $\cx$ has a weak lower bound $Q=\omega$.
Then there exists a constant $C\in\rr$
such that, for any $u\in \dot{M}^{s,p}(\cx)$, $u-C\in L^{p^\ast}(\cx)$ and
\begin{equation}\label{s-p}
\|u-C\|_{L^{p\ast}(\cx)} \leq \widetilde{C} \|u\|_{\dot{M}^{s,p}(\cx)},
\end{equation}
where $\widetilde{C}$  is a positive constant independent of $u$.
\end{corollary}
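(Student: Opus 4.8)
The plan is to globalize the local Poincar\'e-type estimate of Lemma \ref{pc}: apply it on an exhausting sequence of balls centered at the distinguished point $x_0$ coming from the weak lower bound, record for each such ball the resulting additive constant, and then let the radii tend to infinity, showing that these constants converge and that the limit is the desired $C$. Fix $u\in\dot{M}^{s,p}(\cx)$ and an arbitrary $g\in\cd^s(u)$. Under the hypothesis $p\in(\omega/(\omega+s),\omega/s)$ one has $p^\ast=\omega p/(\omega-sp)\in(1,\infty)$, so $L^{p^\ast}(\cx)$ is a Banach space; I also assume, as I may (cf.\ the proof of Corollary \ref{pc-int}), that $\delta$ is small enough that $A_0\delta^{p/\omega}<1$, so that Lemma \ref{pc} applies. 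Combining the weak lower bound at $x_0$ with the doubling condition \eqref{eq-doub} (cf.\ Proposition \ref{p2.15}), I obtain the two-sided comparison $\mu(B(x_0,R))\sim R^\omega$ for all $R\in[1,\infty)$, with equivalence constants depending only on $x_0$ and the structural constants of $\cx$.

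For each $j\in\nn$, I would set $B_j:=B(x_0,\delta^{-j})$ and apply Lemma \ref{pc} to $B_j$ (with $r_0=\delta^{-j}$; note $u\in\dot{M}^{s,p}(\delta^{-1}B_j)$ because $u\in\dot{M}^{s,p}(\cx)$). Together with $\|g\|_{L^p(\delta^{-1}B_j)}\le\|g\|_{L^p(\cx)}$, this produces a constant $c_j\in\rr$ with
\begin{equation*}
\|u-c_j\|_{L^{p^\ast}(B_j)}\lesssim\delta^{-js}\,\mu(B_j)^{1/p^\ast}\,\mu\left(\delta^{-1}B_j\right)^{-1/p}\,\|g\|_{L^p(\cx)}.
\end{equation*}
The key point is that, because $1/p^\ast=1/p-s/\omega$ and $\mu(B_j)\sim\mu(\delta^{-1}B_j)\sim\delta^{-j\omega}$, the prefactor is comparable to $\delta^{-js}\delta^{-j\omega(1/p^\ast-1/p)}=\delta^{-js}\delta^{js}=1$, uniformly in $j$; this is precisely where the weak lower bound enters. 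Hence $\sup_{j\in\nn}\|u-c_j\|_{L^{p^\ast}(B_j)}\lesssim\|g\|_{L^p(\cx)}$. I would then show that $\{c_j\}_{j\in\nn}$ is Cauchy: for $i\le j$, using $B_i\subset B_j$ and this uniform bound,
\begin{equation*}
|c_i-c_j|\,\mu(B_i)^{1/p^\ast}=\|c_i-c_j\|_{L^{p^\ast}(B_i)}\le\|u-c_i\|_{L^{p^\ast}(B_i)}+\|u-c_j\|_{L^{p^\ast}(B_j)}\lesssim\|g\|_{L^p(\cx)},
\end{equation*}
and $\mu(B_i)\gtrsim\delta^{-i\omega}\to\infty$ forces $\{c_j\}$ to converge; set $C:=\lim_{j\to\infty}c_j\in\rr$. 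Fixing $m$, letting $j\to\infty$ and invoking Fatou's lemma on $B_m$, and then letting $m\to\infty$ (so that $\bigcup_{m\in\nn}B_m=\cx$) together with the monotone convergence theorem, I would conclude $u-C\in L^{p^\ast}(\cx)$ with $\|u-C\|_{L^{p^\ast}(\cx)}\lesssim\|g\|_{L^p(\cx)}$.

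Finally, I would remove the dependence on $g$. Since $\mu(\cx)=\infty$ (the running assumption of this section) and $p^\ast<\infty$, the zero function is the only constant lying in $L^{p^\ast}(\cx)$; hence, if two choices of $g\in\cd^s(u)$ led to two limiting constants, their difference --- an $L^{p^\ast}(\cx)$ constant --- would vanish, so $C$ depends on $u$ alone. Taking the infimum over all $g\in\cd^s(u)$ in the bound just obtained then yields $\|u-C\|_{L^{p^\ast}(\cx)}\le\widetilde{C}\|u\|_{\dot{M}^{s,p}(\cx)}$ with $\widetilde{C}$ independent of $u$, which is \eqref{s-p}. I expect the main obstacle to be verifying the uniformity in $j$ of the local estimate: it rests entirely on the exponent identity $1/p^\ast=1/p-s/\omega$ and on the two-sided volume growth $\mu(B(x_0,R))\sim R^\omega$, so this is exactly where the weak lower bound is genuinely used; the accompanying limiting argument for the additive constants is routine but not automatic, as it uses $\mu(\cx)=\infty$ both for convergence of $\{c_j\}$ and for the well-definedness of $C$ independently of $g$. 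Everything else reduces to Lemma \ref{pc} and standard measure theory.
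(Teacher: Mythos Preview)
Your proposal is correct and follows essentially the same route as the paper: apply Lemma~\ref{pc} on an exhausting family of balls centered at $x_0$, use the weak lower bound together with the exponent identity $1/p^\ast=1/p-s/\omega$ to make the local estimates uniform, extract a limiting constant, and conclude via Fatou. The only cosmetic differences are that the paper uses radii $k\in\nn$ and the specific constants $u_{B_k}$ (via Remark~\ref{replace}), showing $\{u_{B_k}\}$ is merely bounded and passing to a convergent subsequence, whereas you use radii $\delta^{-j}$, abstract near-minimizers $c_j$, and a cleaner Cauchy argument; your additional remark that $C$ is independent of the choice of $g$ (because nonzero constants do not lie in $L^{p^\ast}(\cx)$ when $\mu(\cx)=\infty$) is a nice clarification the paper omits.
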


\begin{proof}
Let $s$ and $p$ be as in this corollary. Let $u\in \dot{M}^{s,p}(\cx)$
and fix a point $x_0\in\cx$.
For any $k\in\nn$, let $B_k:=B(x_0,k)$. Choose a $g\in\mathcal{D}^s(u)$
such that $\|g\|_{L^p(\cx)}\leq 2\|u\|_{\dot{M}^{s,p}(\cx)}$.
By Lemma \ref{pc}, we find that, for any $k\in\nn$,
\begin{align*}
\left[\oint_{B_k}|u(y)-u_{B_k}|^{p^\ast}\,d\mu(y)\right]^{\frac{1}{p^\ast}}
\lesssim k^s\left\{\oint_{\delta^{-1}B_k}[g(y)]^p\,d\mu(y)\right\}^{1/p}
\lesssim k^s[\mu(B_k)]^{-1/p}\|u\|_{\dot{M}^{s,p}(\cx)}.
\end{align*}
This, together with the assumption that $\cx$
has a weak lower bound $Q=\omega$, and Proposition \ref{p2.15}, implies that
\begin{align}\label{3.13x}
\left[\int_{B_k}|u(y)-u_{B_k}|^{p^\ast}\,d\mu(y)\right]^{\frac{1}{p^\ast}}
&\lesssim k^s[\mu(B_k)]^{1/p^\ast-1/p}\|u\|_{\dot{M}^{s,p}(\cx)}\notag\\
&\lesssim k^{s(1-Q/\omega)}\|u\|_{\dot{M}^{s,p}(\cx)}\lesssim\|u\|_{\dot{M}^{s,p}(\cx)}.
\end{align}
By this, we find that, for any $k\in\nn$,
\begin{align*}
\left|u_{B_k}-u_{B_1}\right|
&\leq\frac{1}{\mu(B_1)}\int_{B_1}\left|u(y)-u_{B_k}\right|\,d\mu(y)\\
&\leq \frac{1}{[\mu(B_1)]^{1/p^\ast}}\left[\int_{B_1}
\left|u(y)-u_{B_k}\right|^{p^\ast}\,d\mu(y)\right]^{\frac{1}{p^\ast}}
\leq\frac{1}{[\mu(B_1)]^{1/p^\ast}}\|u\|_{\dot{M}^{s,p}(\cx)},
\end{align*}
which implies that $\{u_{B_k}\}_{k\in\nn}\subset \rr$ is a bounded sequence.
From this,
we deduce that there exist a subsequence
$\{u_{B_{k_j}}\}_{j\in\nn}$ and a constant $C\in\rr$
such that $C=\lim_{j\to\infty}u_{B_{k_j}}$.
Moreover, by \eqref{3.13x} and the Fatou lemma, we further conclude that
\begin{align*}\left[\int_{\cx}|u(x)-C|^{p^\ast}\,d\mu(x)\right]^{\frac{1}{p^\ast}}
&=\left[\int_{\cx}\lim_{j\to\infty}\left|[u(x)-u_{B_k}]
\mathbf{1}_{B_{k_j}}(x)\right|^{p^\ast}\,d\mu(x)\right]^{\frac{1}{p^\ast}}\\
&\leq\varliminf_{j\to\infty}\left[\int_{B_{k_j}}\left|u(x)-u_{B_{k_j}}\right|^{p^\ast}\,d\mu(x)\right]^{\frac{1}{p^\ast}}
\lesssim\|u\|_{\dot{M}^{s,p}(\cx)}.
\end{align*}
This finishes the proof of Corollary \ref{l-past}.
\end{proof}

\begin{remark}
Let $\omega$ be as in \eqref{eq-doub} and $p\in(0,\omega)$.
In \cite[Theorem 22]{agh20}, Alvarado et al. proved that,
if $\cx$ is uniformly perfect (see \cite[(39)]{agh20}),
then that \eqref{s-p} holds true with $s=1$ is equivalent
to that $\cx$ has a lower bound.
\end{remark}

The following lemma is a  Poincar\'e type inequality
for $\dd^s(u)$ (see also \cite[Lemma 2.1]{kyz11}).

\begin{lemma}\label{vpc}
Let $s\in(0,\fz)$. Then there exists a positive constant $C$
such that, for any $k\in\zz$, any measurable function $u$ on
$\cx$, $x\in\cx$, and $\{g_j\}_{j\in\zz}\in\dd^s(u)$,
\begin{equation}\label{eq-vpc}
\inf_{c\in\rr}\oint_{B(x,\delta^k)}|u(y)-c|\,d\mu(y)\leq C\delta^{ks}
\sum_{j=k-3}^{k-1}\oint_{B(x,\delta^{k-2})}g_j(y)\,d\mu(y).
\end{equation}
\end{lemma}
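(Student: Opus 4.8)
The plan is to mimic the proof of \cite[Lemma 2.1]{kyz11}: compare $u$ on $B_0:=B(x,\delta^k)$ with its average over a \emph{satellite} region $R$ that sits inside $\widetilde B:=B(x,\delta^{k-2})$, lies at distance comparable to $\delta^{k-1}$ from $B_0$, and satisfies $\mu(R)\gtrsim\mu(B_0)$. First dispose of the trivial cases. If $\sum_{j=k-3}^{k-1}\oint_{\widetilde B}g_j\,d\mu=\infty$ there is nothing to prove; and if this quantity is finite then, once the pointwise bound below is available and $\mu(R)>0$, fixing a point $z_0\in R$ with $\sum_{j=k-3}^{k-1}g_j(z_0)<\infty$ shows that $u\in L^1(B_0)$, so that all the averages make sense (alternatively one may replace averages of $u$ by medians throughout, which sidesteps integrability).

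The heart of the argument is the construction of $R$. The choice $R=\widetilde B\setminus B(x,\delta^{k-1})$ used on RD-spaces relies on reverse doubling to secure $\mu(R)\sim\mu(\widetilde B)$; here one argues instead geometrically, exploiting that $\widetilde B$ has radius $\delta^{-2}$ times that of $B_0$ while $\delta$ is tiny compared with $A_0$ (recall $\delta\le(2A_0)^{-10}$). One looks for a point $w\in\cx$ with $d(x,w)\in[4A_0^3\delta^k,(2A_0)^{-1}\delta^{k-2}]$ and sets $R:=B(w,\delta^k)$: then the quasi-triangle inequality gives $\delta^k\le d(y,z)<\delta^{k-3}$ for all $y\in B_0$ and $z\in R$, one checks $R\subset\widetilde B$, and, since $B_0\subset B(w,2A_0\delta^{k-2})=2A_0\delta^{-2}R$, the doubling condition \eqref{eq-doub} yields $\mu(R)\gtrsim\mu(B_0)$. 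If no such $w$ exists — a ``gap'' in $\cx$ around $x$ at these scales, so that $B(x,(2A_0)^{-1}\delta^{k-2})=B(x,4A_0^3\delta^k)$ — one instead produces a satellite of the form $B(w,\tfrac12\delta^k)$ centered at a point $w$ with $d(x,w)\in[3A_0^2\delta^k,4A_0^3\delta^k)$, and, should this fail as well, descends to the first finer scale at which such a point appears (points arbitrarily close to $x$ exist because $\mu(\{x\})=0$ while $\mu(B(x,r))>0$ for every $r>0$), still retaining $\mu(B(w,\tfrac12\delta^k))\gtrsim\mu(B_0)$ by doubling; equivalently one may take $R$ to be a level-$k$ dyadic sibling of the cube containing $x$ inside its level-$(k-2)$ ancestor from Lemma~\ref{2-cube}. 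I expect this step — locating a satellite of mass $\gtrsim\mu(B_0)$ at distance $\sim\delta^{k-1}$ from $B_0$ with no recourse to reverse doubling — to be the main obstacle, and this is precisely where the smallness of $\delta$ relative to $A_0$ is spent.

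With $R$ fixed, take $\{g_j\}_{j\in\zz}\in\dd^s(u)$ with exceptional set $E$, $\mu(E)=0$. For a.e.\ $y\in B_0$ and a.e.\ $z\in R$ we have $d(y,z)\in[\delta^k,\delta^{k-3})$, so the index $j$ with $\delta^{j+1}\le d(y,z)<\delta^j$ lies in $\{k-3,k-2,k-1\}$ and hence
$$
|u(y)-u(z)|\le[d(y,z)]^s\big[g_j(y)+g_j(z)\big]\le\delta^{(k-3)s}\sum_{j=k-3}^{k-1}\big[g_j(y)+g_j(z)\big].
$$
Choosing $c:=\oint_R u\,d\mu$ (or a median of $u$ over $R$) and using $|u(y)-c|\le\oint_R|u(y)-u(z)|\,d\mu(z)$, we integrate in $y$ over $B_0$ and in $z$ over $R$ to get
$$
\oint_{B_0}|u(y)-c|\,d\mu(y)\le\delta^{(k-3)s}\sum_{j=k-3}^{k-1}\left[\oint_{B_0}g_j(y)\,d\mu(y)+\oint_R g_j(z)\,d\mu(z)\right].
$$
Finally, $\delta^{(k-3)s}=\delta^{-3s}\delta^{ks}\lesssim\delta^{ks}$; and since $B_0\subset\widetilde B$ with $\mu(\widetilde B)\le C_{(\mu)}\delta^{-2\omega}\mu(B_0)$ and $R\subset\widetilde B$ with $\mu(R)\gtrsim\mu(B_0)$, we have $\oint_{B_0}g_j\le\mu(B_0)^{-1}\int_{\widetilde B}g_j\lesssim\oint_{\widetilde B}g_j$ and likewise $\oint_R g_j\lesssim\oint_{\widetilde B}g_j$. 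Combining these estimates yields \eqref{eq-vpc}.
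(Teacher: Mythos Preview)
Your argument follows the paper's template: choose a region $R\subset\widetilde B=B(x,\delta^{k-2})$ with $\delta^k\le d(y,z)<\delta^{k-3}$ for all $(y,z)\in B_0\times R$, bound $|u(y)-u(z)|$ via $g_{k-3}+g_{k-2}+g_{k-1}$, and average. The paper simply takes $R=B(x,\delta^{k-2})\setminus B(x,A_0\delta^{k-1})$, verifies the two-sided distance estimate, and in its final line passes from $\oint_R g_j$ to $\oint_{\widetilde B}g_j$ without comment. You are right that this last passage needs $\mu(R)\gtrsim\mu(\widetilde B)$, which is exactly what reverse doubling would supply; your satellite-ball construction is an attempt to secure the comparable bound $\mu(R)\gtrsim\mu(B_0)$ from doubling alone.

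The gap lies in your fallback. If no $w$ with $d(x,w)\in[3A_0^2\delta^k,(2A_0)^{-1}\delta^{k-2}]$ exists --- which can genuinely occur in a doubling space, e.g.\ when $B(x,\delta^{k-2})$ coincides as a set with a ball of radius $\ll\delta^k$ --- then ``descending to a finer scale'' forces $d(x,w)\ll\delta^k$, so the satellite sits inside $B_0$ and the lower bound $d(y,z)\ge\delta^k$ is lost; the Haj\l asz index then exceeds $k-1$ and the pointwise estimate no longer involves $g_{k-3},g_{k-2},g_{k-1}$. The dyadic-sibling alternative has the same defect: in this situation the level-$(k-2)$ cube containing $x$ may coincide with its sole level-$k$ descendant. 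In fact, in such a configuration one can choose $\{g_j\}\in\dd^s(u)$ with $g_{k-3}=g_{k-2}=g_{k-1}=0$ a.e.\ on $\widetilde B$ while $u$ oscillates on $B_0$, so no region $R\subset\widetilde B$ can meet both of your requirements simultaneously --- the difficulty you flagged is real and is removed neither by the paper's annulus nor by your satellite.
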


\begin{proof}
Let $s$, $u$, and $\{g_j\}_{j\in\zz}$ be as in this lemma.
Observe that, for any $k\in\zz$ and $x\in\cx$,
\begin{align}\label{3.19x}
\inf_{c\in\rr}\oint_{B(x,\delta^k)}|u(y)-c|\,d\mu(y)
&\leq\oint_{B(x,\delta^k)}|u(y)-u_{B(x,\delta^{k-2})\setminus B(x,A_0\delta^{k-1})}|\,d\mu(y)\notag\\
&\leq\oint_{B(x,\delta^k)}\oint_{B(x,\delta^{k-2})
\setminus B(x,A_0\delta^{k-1})}|u(y)-u(z)|\,d\mu(y)\mu(z).
\end{align}
Note that, due to the fact that $\delta$ is very small, we have,
for any $y \in B(x,\delta^k)$ and $z\in B(x,\delta^{k-2})\setminus B(x,A_0\delta^{k-1})$,
$$d(y,z)\leq A_0[d(y,x)+d(x,z)]< 2A_0\delta^{k-2}\leq\delta^{k-3}$$
and
$$d(x,z)\leq A_0[d(x,y)+d(y,z)]<A_0\delta^k+A_0d(y,z),$$
which implies that
$$\delta^k\leq d(y,z)<\delta^{k-3}.$$
From this, we deduce that there exists a unique $j_0\in\{k-1,k-2,k-3\}$ such that
$$\delta^{j_0+1}\leq d(y,z)<\delta^{j_0}$$
and hence
$$
|u(y)-u(z)|\leq [d(y,z)]^s[g_{j_0}(y)+g_{j_0}(z)]
\leq \delta^{(k-3)s}\sum_{j=k-3}^{k-1}[g_j(y)+g_j(z)].
$$
Therefore, by this and \eqref{3.19x}, we conclude that
\begin{align*}
&\inf_{c\in\rr}\oint_{B(x,\delta^k)}|u(y)-c|\,d\mu(y)\\
&\quad\lesssim \delta^{ks}\sum_{j=k-3}^{k-1}\oint_{B(x,\delta^k)}
\oint_{B(x,\delta^{k-2})\setminus B(x,A_0\delta^{k-1})}[g_j(y)+g_j(z)]\,d\mu(y)\mu(z)\\
&\quad\lesssim \delta^{ks}\sum_{j=k-3}^{k-1}\oint_{B(x,\delta^{k-2})}g_j(y)\,d\mu(y),
\end{align*}
which completes the proof of Lemma \ref{vpc}.
\end{proof}

\begin{remark}\label{vpc-R}
Similarly to Remark \ref{replace}, under the assumptions same as in
Lemma \ref{vpc}, the left hand side of \eqref{eq-vpc} can be replaced by
$$\oint_{B(x,\delta^k)}|u(y)-u_{B(x,\delta^k)}|\,d\mu(y).$$
\end{remark}

Using Lemma \ref{pc}, we can show the following Poincar\'e type inequality,
which is very useful in the case when $p\in(0,1]$.

\begin{lemma}\label{vpc2}
Let $s\in(0,\fz)$, $p\in(0,1]$, and $\varepsilon,\ \varepsilon'\in(0,s)$
with $\varepsilon<\varepsilon'$.
If $A_0\delta^{p/\omega}<1$, then there exists a positive
constant $C$ such that, for any $k\in\zz$,
$x\in\cx$, any measurable function $u$, and
$\{g_j\}_{j\in\zz}\in\dd^s(u)$,
\begin{align}\label{eq-vpc2}
&{}\inf_{c\in\rr}\left[\oint_{B(x,\delta^k)}|u(y)-c|^{\frac{\omega p}{\omega-\varepsilon p}}
\,d\mu(y)\right]^{\frac{\omega-\varepsilon p}{\omega p}}\notag\\
&\quad\leq C\delta^{k\varepsilon'}\sum_{j=k-2}^\infty\delta^{j(s-\varepsilon')}
\left\{\oint_{B(x,\delta^{k-1})}[g_j(y)]^p\,d\mu(y)\right\}^{\frac{1}{p}}.
\end{align}
\end{lemma}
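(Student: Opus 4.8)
The plan is to deduce \eqref{eq-vpc2} from the Poincar\'e--Sobolev inequality of Lemma \ref{pc}, used with the smoothness exponent $\varepsilon$ in place of $s$, after manufacturing from the given $s$-Haj\l asz gradient $\{g_j\}_{j\in\zz}$ a single $\varepsilon$-gradient of $u$ on the dilated ball $\delta^{-1}B_0$. Fix $k\in\zz$ and $x\in\cx$, write $B_0:=B(x,\delta^k)$ so that $\delta^{-1}B_0=B(x,\delta^{k-1})$, and assume that the right-hand side of \eqref{eq-vpc2} is finite, since otherwise there is nothing to prove. Because $\delta$ is small (in particular $2A_0\delta<1$), any $y,z\in\delta^{-1}B_0$ satisfy $d(y,z)\le 2A_0\delta^{k-1}<\delta^{k-2}$, so the unique integer $j_0$ with $\delta^{j_0+1}\le d(y,z)<\delta^{j_0}$ satisfies $j_0\ge k-2$; thus only the levels $j\ge k-2$ of $\{g_j\}_{j\in\zz}$ are relevant for pairs of points in $\delta^{-1}B_0$.

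Next, I would set, for $y\in\delta^{-1}B_0$,
$$g(y):=\sum_{j=k-2}^{\infty}\delta^{j(s-\varepsilon)}g_j(y),$$
and verify that $g\in\cd^\varepsilon(u)$ on $\delta^{-1}B_0$: for $y,z$ in $\delta^{-1}B_0$ outside the relevant null set, with $j_0\ge k-2$ as above, the definition of $\dd^s(u)$ together with $0<d(y,z)<\delta^{j_0}$, $s>\varepsilon$, and $g_j\ge0$ gives
$$|u(y)-u(z)|\le[d(y,z)]^s\left[g_{j_0}(y)+g_{j_0}(z)\right]\le[d(y,z)]^\varepsilon\delta^{j_0(s-\varepsilon)}\left[g_{j_0}(y)+g_{j_0}(z)\right]\le[d(y,z)]^\varepsilon\left[g(y)+g(z)\right].$$
Since $p\in(0,1]$, the elementary inequality $\left(\sum_j a_j\right)^p\le\sum_j a_j^p$ and the monotone convergence theorem yield
$$\oint_{\delta^{-1}B_0}[g(y)]^p\,d\mu(y)\le\sum_{j=k-2}^{\infty}\delta^{jp(s-\varepsilon)}\oint_{\delta^{-1}B_0}[g_j(y)]^p\,d\mu(y),$$
which will be finite by the estimate of the next paragraph; in particular $u\in\dot{M}^{\varepsilon,p}(\delta^{-1}B_0)$, and, since $A_0\delta^{p/\omega}<1$, Lemma \ref{pc} with $s$ there replaced by $\varepsilon$ (so that $p^\ast=\frac{\omega p}{\omega-\varepsilon p}$; note $\varepsilon p<\omega$ is implicit in the left-hand side of \eqref{eq-vpc2}) gives
$$\inf_{c\in\rr}\left[\oint_{B_0}|u(y)-c|^{\frac{\omega p}{\omega-\varepsilon p}}\,d\mu(y)\right]^{\frac{\omega-\varepsilon p}{\omega p}}\lesssim\delta^{k\varepsilon}\left\{\oint_{\delta^{-1}B_0}[g(y)]^p\,d\mu(y)\right\}^{1/p}.$$

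It then remains to pass from the $L^p$-average of the single function $g$ back to the weighted sum of $L^p$-averages appearing on the right-hand side of \eqref{eq-vpc2}. Writing $A_j:=\{\oint_{\delta^{-1}B_0}[g_j(y)]^p\,d\mu(y)\}^{1/p}$ and splitting $\delta^{jp(s-\varepsilon)}=\delta^{jp(s-\varepsilon')}\delta^{jp(\varepsilon'-\varepsilon)}$, I would apply the H\"older inequality with exponents $1/p$ and $1/(1-p)$ (the case $p=1$ being trivial) to obtain
$$\sum_{j=k-2}^{\infty}\delta^{jp(s-\varepsilon)}A_j^p\le\left(\sum_{j=k-2}^{\infty}\delta^{j(s-\varepsilon')}A_j\right)^p\left(\sum_{j=k-2}^{\infty}\delta^{\frac{jp(\varepsilon'-\varepsilon)}{1-p}}\right)^{1-p};$$
because $\varepsilon<\varepsilon'$, the last geometric series converges and is comparable, up to multiplicative constants, to $\delta^{(k-2)p(\varepsilon'-\varepsilon)}$, hence to $\delta^{kp(\varepsilon'-\varepsilon)}$. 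Taking $p$-th roots and combining with the two displays of the previous paragraph yields
\begin{align*}
\inf_{c\in\rr}\left[\oint_{B_0}|u(y)-c|^{\frac{\omega p}{\omega-\varepsilon p}}\,d\mu(y)\right]^{\frac{\omega-\varepsilon p}{\omega p}}
&\lesssim\delta^{k\varepsilon}\delta^{k(\varepsilon'-\varepsilon)}\sum_{j=k-2}^{\infty}\delta^{j(s-\varepsilon')}A_j\\
&=\delta^{k\varepsilon'}\sum_{j=k-2}^{\infty}\delta^{j(s-\varepsilon')}\left\{\oint_{\delta^{-1}B_0}[g_j(y)]^p\,d\mu(y)\right\}^{1/p},
\end{align*}
which is exactly \eqref{eq-vpc2} since $\delta^{-1}B_0=B(x,\delta^{k-1})$; the same H\"older bound, applied to $\sum_j\delta^{jp(s-\varepsilon)}\|g_j\|_{L^p(\delta^{-1}B_0)}^p$, also confirms the finiteness used above whenever the right-hand side of \eqref{eq-vpc2} is finite.

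The step I expect to be the main obstacle is this last passage when $p\in(0,1)$: since $L^p$ is then only $p$-subadditive, the Minkowski inequality is unavailable, and one genuinely needs the strict gap $\varepsilon<\varepsilon'$ to produce the summable geometric factor $\delta^{jp(\varepsilon'-\varepsilon)/(1-p)}$ in the H\"older estimate --- this is exactly why the hypotheses $p\le1$ and $\varepsilon<\varepsilon'$ are imposed. A secondary technical point is the scale bookkeeping in the first paragraph (ensuring that only levels $j\ge k-2$ intervene, which uses that $\delta$ is small relative to $A_0$) and checking that Lemma \ref{pc} applies, for which the hypothesis $A_0\delta^{p/\omega}<1$ is used.
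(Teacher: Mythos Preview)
Your proof is correct and follows essentially the same approach as the paper: construct a single $\varepsilon$-gradient $g$ on $\delta^{-1}B_0$ from the levels $g_j$ with $j\ge k-2$, apply Lemma~\ref{pc} with smoothness $\varepsilon$, and use H\"older with exponents $1/p$ and $(1/p)'=1/(1-p)$ to convert the $\varepsilon$-weighted sum into the $\varepsilon'$-weighted one. The only cosmetic difference is that the paper defines $g$ as the pointwise $\ell^p$-aggregate $g=\bigl(\sum_{j\ge k-2}\delta^{j(s-\varepsilon)p}g_j^p\bigr)^{1/p}$ rather than your $\ell^1$-sum, but since $p\le1$ both reduce to the same $L^p$-bound $\sum_{j\ge k-2}\delta^{jp(s-\varepsilon)}\oint_{\delta^{-1}B_0}g_j^p$ and the subsequent H\"older step is identical.
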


\begin{proof} Let all the notation be the same as in this lemma.
Without loss of generality, we may assume that the
right hand side of \eqref{eq-vpc2} is less than infinity.
For any $k\in\zz$ and $x\in\cx$, let
$$g(x):=\left\{\sum_{j=k-2}^\infty\delta^{j(s-\varepsilon)p}[g_j(x)]^p\right\}^{\frac{1}{p}}.$$
We claim that $g\in\cd^s(u)$ and $u\in\dot{M}^{\varepsilon,p}(B(x,\delta^{k-1}))$.
Indeed, for any $y,\ z\in B(x,\delta^{k-1})$, we have
$$d(y,z)\leq A_0[d(y,x)+d(x,z)]<2A_0\delta^{k-1}<\delta^{k-2}.$$
Therefore, there exists a unique integer $j_0 \geq k-2$ such that
$$\delta^{j_0+1}\leq d(y,z)<\delta^{j_0}.$$
Then, since $\{g_j\}_{j\in\zz}\in\dd^s(u)$,
it follows that there exists an $E\subset\cx$ with $\mu(E)=0$
such that, for any $y,\ z\in B(x,\delta^{k-1})\setminus E$,
\begin{align*}
|u(y)-u(z)|&\leq[d(y,z)]^s[g_{j_0}(y)+g_{j_0}(z)]\leq
[d(y,z)]^\varepsilon\delta^{j(s-\varepsilon)}[g_{j_0}(y)+g_{j_0}(z)]\\
&\leq [d(y,z)]^\varepsilon[g(y)+g(z)],
\end{align*}
which implies that $g\in\cd^\varepsilon(u)$.
On the other hand, by $p\in(0,1]$, the H\"older inequality with exponent $1/p\geq1$,
and $0<\varepsilon<\varepsilon'<s$, we know that
\begin{align*}
\|g\|_{L^p(B(x,\delta^{k-1}))}
&=\left\{\int_{B(x,\delta^{k-1})}
\sum_{j=k-2}^\infty\delta^{j(s-\varepsilon)p}[g_j(y)]^p\,d\mu(y)\right\}^{\frac{1}{p}}\\
&=\left\{\sum_{j=k-2}^\infty
\delta^{j(\varepsilon'-\varepsilon)p}\delta^{j(s-\varepsilon')p}
\int_{B(x,\delta^{k-1})}[g_j(y)]^p\,d\mu(y)\right\}^{\frac{1}{p}}\\
&\leq \left[\sum_{j=k-2}^\infty
\delta^{j(\varepsilon'-\varepsilon)p(1/p)'}\right]^{\frac{1}{(1/p)'p}}
\sum_{j=k-2}^\infty\delta^{j(s-\varepsilon')}
\left\{\int_{B(x,\delta^{k-1})}[g_j(y)]^p\,d\mu(y)\right\}^{\frac{1}{p}}\\
&\lesssim \delta^{k(\varepsilon'-\varepsilon)}
\left[V_{\delta^{k-1}}(x)\right]^{1/p}\sum_{j=k-2}^\infty
\delta^{j(s-\varepsilon')}
\left\{\oint_{B(x,\delta^{k-1})}[g_j(y)]^p\,d\mu(y)\right\}^{\frac{1}{p}}<\infty.
\end{align*}
From this, we deduce that $u\in\dot{M}^{\varepsilon,p}(B(x,\delta^{k-1}))$.
Combining the above claim and Lemma \ref{pc}, and
using the H\"older inequality with exponent $1/p\geq1$, we find that
\begin{align*}
&\inf_{c\in\rr}\left[\oint_{B(x,\delta^k)}|u(y)-c|^{\frac{\omega p}{\omega-\varepsilon p}}
\,d\mu(y)\right]^{\frac{\omega-\varepsilon p}{\omega p}}\\
&\quad\lesssim \delta^{k\varepsilon}
\left\{\oint_{B(x,\delta^{k-1})}[g(y)]^p\,d\mu(y)\right\}^{1/p}\\
&\quad\sim \delta^{k\varepsilon}
\left\{\sum_{j=k-2}^\infty\delta^{j(s-\varepsilon)p}
\oint_{B(x,\delta^{k-1})}[g_j(y)]^p\,d\mu(y)\right\}^{1/p}\\
&\quad\lesssim \delta^{k\varepsilon}\left\{\sum_{j=k-2}^\infty
\delta^{j(\varepsilon'-\varepsilon)p(1/p)'}\right\}^{\frac{1}{(1/p)'p}}
\sum_{j=k-2}^\infty\delta^{j(s-\varepsilon')}
\left\{\oint_{B(x,\delta^{k-1})}[g_j(y)]^p\,d\mu(y)\right\}^{\frac{1}{p}}\\
&\quad\lesssim\delta^{k\varepsilon'}\sum_{j=k-2}^\infty\delta^{j(s-\varepsilon')}
\left\{\oint_{B(x\delta^{k-1})}[g_j(y)]^p\,d\mu(y)\right\}^{\frac{1}{p}}.
\end{align*}
This finishes the proof of Lemma \ref{vpc2}.
\end{proof}

The following lemma illustrates  that  any  element of $\ca\dot{F}^s_{p,\infty}(\cx)$
is a locally integrable function.

\begin{lemma}\label{btl-loc}
Let $\beta,\ \gamma\in (0,\eta)$ with $\eta$ as in Definition \ref{exp-ati},
$s\in (0, \beta\wedge\gamma)$, and $p\in(\omega/(\omega+s),\infty)$
with $\omega$ as in \eqref{eq-doub}.
Assume that the measure $\mu$ of $\cx$ has a weak lower bound $Q=\omega$.
Then, for any $f\in\ca\dot{F}^s_{p,\infty}(\cx)$,
there exists an $\widetilde{f}\in L^1_{\loc}(\cx)$ such that
$f=\widetilde{f}$ in $(\cgg)'.$
\end{lemma}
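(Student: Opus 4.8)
The plan is to realize $f$ via the discrete Calder\'on reproducing formula of Lemma \ref{crf} and to verify that the resulting series, once suitable constants are subtracted from its low-frequency part, converges in $L^1(B)$ on every ball $B$. The starting point is that, by Remark \ref{q-f} and Definition \ref{gbtl}(ii) with $q=\infty$, the function $G:=\sup_{k\in\zz}\delta^{-ks}\sup_{\phi\in\cf_k(\cdot)}|\langle f,\phi\rangle|$ belongs to $\lp$ with $\|G\|_{\lp}=\|f\|_{\ca\dot{F}^s_{p,\infty}(\cx)}$, so that $G<\fz$ almost everywhere and $|Q_k(f)(x)|\le\delta^{ks}G(x)$ for all $k\in\zz$ and $x\in\cx$. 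I then fix $\{\widetilde{Q}_k\}_{k\in\zz}$ as in Lemma \ref{crf}, a parameter $r\in(\omega/(\omega+s),\min\{1,p\})$ close enough to $\min\{1,p\}$ (possible since $p>\omega/(\omega+s)$), and, using that $\ya$ in Lemma \ref{crf} is an arbitrary point of $\qa$, I choose $\ya$ so that $[G(\ya)]^r\le 2\oint_{\qa}[G(y)]^r\,d\mu(y)$ (admissible because $G\in L^r_{\loc}(\cx)$). With these choices, Lemma \ref{crf} gives $f=\sum_{k\in\zz}f_k$ in $(\cgg)'$, where $f_k(\cdot):=\sum_{\alpha\in\ca_k}\sum_{m=1}^{N(k,\alpha)}\mu(\qa)\widetilde{Q}_k(\cdot,\ya)Q_k(f)(\ya)$.

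Next, I fix a ball $B_0:=B(x_0,r_0)$ and $k_0\in\zz$ with $\delta^{k_0}\sim r_0$, and treat the high- and low-frequency parts of $\sum_{k}f_k$ separately. For the high-frequency part $\sum_{k\ge k_0}f_k$, I split, for each $k\ge k_0$, the sum defining $f_k$ into the cubes with $d(\ya,x_0)\le 2A_0r_0$ (``close'') and the remaining ones (``far''). For the close cubes, \eqref{4.23x} and Lemma \ref{6.15.1}(ii) yield $\int_{B_0}|\widetilde{Q}_k(x,\ya)|\,d\mu(x)\ls 1$; combining $|Q_k(f)(\ya)|\le\delta^{ks}G(\ya)$, the choice of $\ya$, the H\"older inequality, the superadditivity of $t\mapsto t^{1/r}$, and --- crucially --- the weak lower bound $Q=\omega$ (which, via Proposition \ref{p2.15} and Lemma \ref{2-cube}(iii), gives $\mu(\qa)\gs\delta^{(k+j_0)\omega}$) then produces a bound on the close-cube contribution to $\int_{B_0}|f_k|\,d\mu$ of the form $C_{(B_0)}\|G\|_{\lp}\,\delta^{k(s+\omega-\omega/r)}$. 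For the far cubes, one groups them into the dyadic annuli $\{2^jr_0\le d(\ya,x_0)<2^{j+1}r_0\}_{j\ge1}$, where \eqref{4.23x} and Lemma \ref{6.15.1} supply the extra spatial decay $\int_{B_0}|\widetilde{Q}_k(x,\ya)|\,d\mu(x)\ls\mu(B_0)(\delta^k)^\gamma[(2^jr_0)^\gamma\mu(B(x_0,2^jr_0))]^{-1}$, and the same ingredients (the weak lower bound again used to sum the $j$-series) give an even faster-decaying contribution $C_{(B_0)}\|G\|_{\lp}\,\delta^{k(s+\gamma+\omega-\omega/r)}$. Since $r>\omega/(\omega+s)$ forces $s+\omega-\omega/r>0$, both geometric series in $k$ converge, so $\sum_{k\ge k_0}f_k$ converges absolutely in $L^1(B_0)$.

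For the low-frequency part $\sum_{k<k_0}f_k$, where $\delta^{ks}$ blows up, I use instead the cancellation $\int_\cx\widetilde{Q}_k(x,y)\,d\mu(y)=0$ from Lemma \ref{crf}(iii). Fixing once and for all a base point $x_\ast\in\cx$, for all $k$ small enough that $d(x,x_\ast)\le(2A_0)^{-1}\delta^k$ whenever $x\in B_0$, the first-variable H\"older regularity \eqref{4.23y} (exponent $\beta$) of $\widetilde{Q}_k$, together with the pointwise estimate $\sum_{\alpha,m}\mu(\qa)D_\gamma(x_\ast,\ya;\delta^k)G(\ya)\ls\delta^{-k\omega/p}\|G\|_{\lp}$ obtained by the same close/far splitting about $x_\ast$ and the weak lower bound, give, for $x\in B_0$, $|f_k(x)-f_k(x_\ast)|\ls (d(x,x_\ast)/\delta^k)^\beta\,\delta^{ks}\,\delta^{-k\omega/p}\|G\|_{\lp}\ls C_{(B_0,x_\ast)}\|G\|_{\lp}\,\delta^{k(s-\beta-\omega/p)}$; as $s<\beta$ makes the exponent negative, $\sum_{k<k_0}[f_k(\cdot)-f_k(x_\ast)]$ converges absolutely and uniformly on $B_0$ (the finitely many intermediate $k$ for which \eqref{4.23y} does not yet apply are handled term by term, each $f_k$ being locally bounded). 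Setting $\widetilde{f}:=\sum_{k\ge0}f_k+\sum_{k<0}[f_k-f_k(x_\ast)]$ --- the cutoff $0$ differing from $k_0$ only by finitely many $L^1(B_0)$ terms --- gives $\widetilde{f}\in L^1_{\loc}(\cx)$. Finally, because every $\phi\in\cgg$ satisfies $\int_\cx\phi\,d\mu=0$, the subtracted constants $f_k(x_\ast)$ vanish upon pairing, so $\langle\widetilde{f},\phi\rangle=\sum_{k\in\zz}\langle f_k,\phi\rangle=\langle f,\phi\rangle$ by Lemma \ref{crf}; that is, $f=\widetilde{f}$ in $(\cgg)'$.

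The main difficulty is the regime $p\le1$ in the high-frequency estimate: a naive pointwise bound of $f_k$ by $\delta^{ks}$ times a fractional power of a Hardy--Littlewood maximal function \emph{need not} be locally integrable, which is what forces the finer close/far decomposition above, and it is exactly there that the weak lower bound $Q=\omega$ enters essentially (controlling $\mu(\qa)^{1-1/r}$ and the annular tails) and makes the restriction $p>\omega/(\omega+s)$ sharp. A secondary --- but genuinely nontrivial --- technical point, reminiscent of the proof of Lemma \ref{pc}, is the bookkeeping forced by the constant $A_0$ in the quasi-triangle inequality when iterating the above estimates over the dyadic annuli, as well as the (standard) verification that the pairing $\langle\widetilde{f},\phi\rangle$ converges absolutely, so that the interchanges with $\sum_{k\in\zz}$ in the last step are legitimate.
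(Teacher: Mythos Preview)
Your approach is correct and genuinely different from the paper's. You realize $f$ via the discrete Calder\'on reproducing formula of Lemma~\ref{crf} (built from an exp-ATI) and estimate the $L^1(B_0)$-norm of each piece $f_k$ directly, subtracting constants on the low-frequency side. The paper instead works with a \emph{1-exp-ATI} $\{Q_k\}$ (Definition~\ref{1-exp-ati}): defining the same $G$, it first shows $|Q_k f(x)-Q_{k+i}f(x)|\lesssim\delta^{ks}G(x)$, then splits cases on $p$. For $p>1$ this makes $\{Q_kf-Q_{k+i}f\}_i$ Cauchy in $L^p$, and $\widetilde f:=Q_0f-\lim_i(Q_0f-Q_if)$ does the job. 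For $p\le1$ the paper uses Lemma~\ref{expfk} to prove the pointwise Haj\l asz inequality $|Q_kf(x)-Q_kf(y)|\lesssim[d(x,y)]^s[G(x)+G(y)]$, so each $Q_kf\in\dot{M}^{s,p}(\cx)$ with norm $\lesssim\|G\|_{L^p}$; then Corollary~\ref{l-past} (the Sobolev-type embedding $\dot{M}^{s,p}\hookrightarrow L^{p^\ast}$ modulo constants, itself resting on Lemma~\ref{pc} and the weak lower bound) yields constants $C_k$ with $\|Q_kf-C_k\|_{L^{p^\ast}}\lesssim\|G\|_{L^p}$, and weak-$L^{p^\ast}$ compactness supplies $\widetilde f$.

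The paper's route is shorter and, in the $p\le1$ regime, delivers the global integrability $\widetilde f\in L^{p^\ast}(\cx)$, which makes the final pairing with $\phi\in\cgg$ immediate. Your route is more hands-on and self-contained --- it does not invoke Lemma~\ref{pc} or Corollary~\ref{l-past}, and it exposes very transparently where the lower bound $\mu(\qa)\gtrsim\delta^{k\omega}$ is spent --- but the last step, showing that $\int_\cx\widetilde f\,\phi\,d\mu$ is absolutely convergent and equals $\langle f,\phi\rangle$ for test functions $\phi$ that are \emph{not} compactly supported, is the one place where bare $L^1_{\loc}$ control is not quite enough; you would need to extract a little more from your estimates (e.g., at-most-polynomial growth of $\widetilde f$ at infinity, which your low-frequency bound in fact gives) to close it.
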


To prove Lemma \ref{btl-loc}, we need the notion of
approximations of the identity with exponential
decay and integration 1; see \cite[Definition 2.8]{hhllyy} for more details.

\begin{definition}\label{1-exp-ati}
Let $\eta\in(0,1)$ be as in Definition \ref{exp-ati}.
A sequence $\{Q_k\}_{k\in\zz}$ of bounded linear integral operators
on $L^2(\cx)$ is called  an \emph{approximation of the identity with exponential
decay and integration 1} (for short, 1-exp-ATI) if $\{Q_k\}_{k\in\zz}$
has the following properties:
\begin{enumerate}
\item[{\rm(i)}] for any $k\in\zz$, $Q_k$
satisfies (ii), (iii), and (iv) of Definition \ref{exp-ati} but without the decay factor
$$\exp\left\{-\nu\left[\frac{\max\{d(x,\cy^k),d(y,\cy^k)\}}{\delta^k}\right]^a\right\};$$
\item[{\rm(ii)}] for any $k\in\zz$ and $x\in\cx$,
$$\int_\cx Q_k(x,y)\,d\mu(y)=1=\int_\cx Q_k(y,x)\,d\mu(y);$$
\item[{\rm(iii)}] letting, for any $k\in\zz$,
$P_k:=Q_k-Q_{k-1}$, then $\{P_k\}_{k\in\zz}$ is an exp-ATI.
\end{enumerate}
\end{definition}

\begin{remark} As was pointed out in
\cite[Remark 2.9]{hhllyy}, the existence of the 1-exp-ATI is guaranteed by \cite[Lemma 10.1]{ah13}.
Moreover, by the proofs of \cite[Proposition 2.9]{hlyy} and
\cite[Proposition 2.7(iv)]{hmy08}, we know that, if $\{Q_k\}_{k\in\zz}$ is a 1-exp-ATI, then, for any $f\in L^2(\cx)$,
$\lim_{k\to\infty}Q_kf=f$ in $L^2(\cx)$.
\end{remark}

By an argument similar to that used in the proof of \cite[Proposition 2.10]{hlyy},
we have the following conclusion; we omit the details.

\begin{lemma}\label{expfk}
Let $\beta,\ \gamma\in(0,\eta)$ with $\eta$ as in Definition \ref{exp-ati}, $s\in(0,\beta\wedge\gamma)$, $k\in\zz$, $x$, $y\in\cx$, and $\{Q_k\}_{k\in\zz}$ be a 1-exp-ATI. For any $z\in\cx$, let
$$\phi(z):=\delta^{ks}[d(x,y)]^{-s}[Q_k(x,z)-Q_k(y,z)].$$
If  $d(x,y)\in (0,\delta^k]$, then $\phi\in\cf_k(x)$, where $\cf_k(x)$ is as in Definition \ref{gbtl}.
\end{lemma}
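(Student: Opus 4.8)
The plan is to verify that the function $\phi$ defined by
$$
\phi(z):=\delta^{ks}[d(x,y)]^{-s}[Q_k(x,z)-Q_k(y,z)]
$$
meets the three requirements of membership in $\cf_k(x)$: it has vanishing integral, it satisfies the size estimate \eqref{t-size} with $x_1=x$ and $r=\delta^k$, and it satisfies the regularity estimate \eqref{t-reg}, all with norm controlled by a constant independent of $k$, $x$, $y$. The vanishing integral is immediate from the cancellation condition of the exp-ATI $\{P_k\}_{k\in\zz}$—or, more directly, since $\{Q_k\}_{k\in\zz}$ is a 1-exp-ATI, $\int_\cx Q_k(x,z)\,d\mu(z)=1=\int_\cx Q_k(y,z)\,d\mu(z)$, so the difference integrates to zero; hence $\phi\in\mathring{\cg}_0^\eta(\beta,\gamma)$ once the size and regularity bounds are in place.

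For the size estimate, I would treat $Q_k(x,z)-Q_k(y,z)$ as a first difference in the first variable. Since $d(x,y)\leq\delta^k$, in particular $d(x,y)\leq(2A_0)^{-1}[\delta^k+d(x,z)]$ is \emph{not} automatic when $d(x,z)$ is small, so I would split into two regimes. When $d(x,y)\leq(2A_0)^{-1}[\delta^k+d(x,z)]$, apply the regularity estimate \eqref{10.23.4} for the 1-exp-ATI kernel (which holds because $\{Q_k\}$ satisfies (iii) of Definition \ref{exp-ati} after removing the $\cy^k$-decay factor, and the argument of Lemma \ref{pro-qk} still applies) to get
$$
|Q_k(x,z)-Q_k(y,z)|\lesssim\left[\frac{d(x,y)}{\delta^k+d(x,z)}\right]^\eta D_\Gamma(x,z;\delta^k),
$$
and then, using $d(x,y)\leq\delta^k$ and $s<\eta$ (which follows from $s<\beta\wedge\gamma<\eta$), bound $[d(x,y)]^{-s}[d(x,y)/(\delta^k+d(x,z))]^\eta\lesssim\delta^{-ks}$ times a factor $[d(x,y)/(\delta^k+d(x,z))]^{\eta-s}\leq 1$; multiplying by $\delta^{ks}$ gives $|\phi(z)|\lesssim D_\Gamma(x,z;\delta^k)$. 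Since $d(x,y)\le\delta^k$, Lemma \ref{6.15.1}(i) lets me replace $D_\Gamma(x,z;\delta^k)$ by $D_\Gamma(x,z;\delta^k)$ (they are comparable whether centered at $x$ or with $\delta^k+d(x,z)$), and taking $\Gamma$ at least $\gamma$ yields the desired $|\phi(z)|\lesssim D_\gamma(x,z;\delta^k)$. In the complementary regime $d(x,y)>(2A_0)^{-1}[\delta^k+d(x,z)]$, one has $\delta^k+d(x,z)\lesssim d(x,y)\leq\delta^k$, so $d(x,z)\lesssim\delta^k$; here I would simply use the individual size bounds $|Q_k(x,z)|\lesssim D_\Gamma(x,z;\delta^k)$ and $|Q_k(y,z)|\lesssim D_\Gamma(y,z;\delta^k)$ from \eqref{10.23.3}, note that $d(y,z)\lesssim\delta^k$ as well (via the quasi-triangle inequality and $d(x,y)\leq\delta^k$), so both are $\lesssim 1/V_{\delta^k}(x)$, and then $\delta^{ks}[d(x,y)]^{-s}\leq\delta^{ks}\cdot\delta^{-ks}\cdot(\text{const})$—wait, here $d(x,y)$ can be arbitrarily small, so this crude bound fails; instead I again invoke the regularity estimate, which is available precisely because in this regime the smallness condition $d(x,y)\leq(2A_0)^{-1}[\delta^k+d(x,z)]$ may fail but one can still use $d(x,x)=0$ trivially—more carefully, I would note that the relevant bound is symmetric and in the worst case $d(x,z),d(y,z)\lesssim\delta^k$ forces $D_\Gamma(x,z;\delta^k)\sim D_\Gamma(y,z;\delta^k)\sim 1/V_{\delta^k}(x)$, and the regularity estimate applied with the roles arranged so that the difference quotient is legitimate gives the $[d(x,y)/\delta^k]^\eta$ gain needed to absorb $[d(x,y)]^{-s}$.

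For the regularity estimate of $\phi$, fix $z,w\in\cx$ with $d(z,w)\leq(2A_0)^{-1}[\delta^k+d(x,z)]$; I need
$$
|\phi(z)-\phi(w)|\lesssim\left[\frac{d(z,w)}{\delta^k+d(x,z)}\right]^\beta D_\gamma(x,z;\delta^k).
$$
Now $\phi(z)-\phi(w)=\delta^{ks}[d(x,y)]^{-s}\big([Q_k(x,z)-Q_k(x,w)]-[Q_k(y,z)-Q_k(y,w)]\big)$, which is a second difference. When $d(x,y)\leq(2A_0)^{-2}[\delta^k+d(x,z)]$ and $d(z,w)\leq(2A_0)^{-2}[\delta^k+d(x,z)]$, the second-difference regularity estimate (part (iii) of Lemma \ref{pro-qk}, valid for the 1-exp-ATI kernel by the same reasoning) gives a bound with a factor $[d(x,y)/(\delta^k+d(x,z))]^\eta[d(z,w)/(\delta^k+d(x,z))]^\eta$; using $s<\eta$ and $\beta<\eta$ as before, the $x$-$y$ factor absorbs $[d(x,y)]^{-s}$ and the $z$-$w$ factor supplies the needed $[d(z,w)/(\delta^k+d(x,z))]^\beta$. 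The remaining regime, where $d(x,y)$ is comparable to $\delta^k+d(x,z)$, is again handled by reducing to first-difference regularity estimates in the $z$ variable for $Q_k(x,\cdot)$ and $Q_k(y,\cdot)$ separately, using that $d(x,z)$, $d(y,z)$ are then all $\lesssim\delta^k$ so the decay factors $D_\Gamma$ are mutually comparable. The main obstacle—and the reason the hypothesis $d(x,y)\in(0,\delta^k]$ matters—is the careful bookkeeping of the constant $A_0$ in these case splits: one must check that whenever a regularity or second-difference regularity estimate is invoked, the associated $A_0$-dependent smallness condition genuinely holds, and that the exponents satisfy $s<\beta\wedge\gamma\le\eta$ strictly so that the powers of $d(x,y)/(\delta^k+d(x,z))$ and $d(z,w)/(\delta^k+d(x,z))$ can be split as (gain for absorbing $[d(x,y)]^{-s}$) times (leftover $\leq 1$) times (the required $\beta$-power). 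Once all cases are assembled, the implicit constants depend only on the exp-ATI data and on $s$, $\beta$, $\gamma$, $A_0$, $\omega$, hence not on $k$, $x$, $y$, so $\|\phi\|_{\mathring{\cg}(x,\delta^k,\beta,\gamma)}\lesssim 1$ and, after rescaling by this constant, $\phi\in\cf_k(x)$ as claimed.
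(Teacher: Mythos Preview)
The paper omits the proof entirely, referring to the technique of \cite[Proposition~2.10]{hlyy} (i.e., the argument behind Lemma~\ref{pro-qk}), and your outline---vanishing integral from the integration-$1$ property, size condition via first-variable regularity of $Q_k$, regularity condition via the second-difference bound---is precisely the natural implementation of that hint.

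One correction and one simplification. In your ``complementary regime'' $d(x,y)>(2A_0)^{-1}[\delta^k+d(x,z)]$ you worry that ``$d(x,y)$ can be arbitrarily small, so this crude bound fails.'' This is wrong: that inequality forces $d(x,y)>(2A_0)^{-1}\delta^k$, so $\delta^{ks}[d(x,y)]^{-s}\leq(2A_0)^s$ and the crude size bounds suffice immediately. More to the point, the case split is unnecessary for the size estimate: since $d(x,y)\leq\delta^k$ by hypothesis, the \emph{original} regularity condition (iii) of Definition~\ref{exp-ati} (which requires only $d(x,x')\leq\delta^k$, not the stronger bound of Lemma~\ref{pro-qk}(ii)) applies to $|Q_k(x,z)-Q_k(y,z)|$ for every $z$, giving the factor $[d(x,y)/\delta^k]^\eta$ directly. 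The genuine work is in the regularity of $\phi$, where extending the second-difference bound to the range $d(z,w)\leq(2A_0)^{-1}[\delta^k+d(x,z)]$ is exactly the content of the argument the paper cites.

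One small omission: $\cf_k(x)$ demands $\phi\in\mathring{\cg}_0^\eta(\beta,\gamma)$, the completion of $\mathring{\cg}(\eta,\eta)$, not merely $\phi\in\mathring{\cg}(x,\delta^k,\beta,\gamma)$. This is automatic here, since the exponential decay of the 1-exp-ATI kernel gives $D_\Gamma$-control for every $\Gamma>0$ and the $\eta$-H\"older regularity in $z$ is built into Definition~\ref{1-exp-ati}(i), so in fact $\phi\in\mathring{\cg}(\eta,\eta)$; but you should say so.
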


Next, we prove Lemma \ref{btl-loc}.

\begin{proof}[Proof of Lemma \ref{btl-loc}]
Assume that $f\in\ca\dot{F}^s_{p,\infty}(\cx)$ and $\{Q_k\}_{k\in\zz}$ is an 1-exp-ATI.
For any $x\in\cx$, let
$$g(x):=\sup_{k\in\zz}\delta^{-ks}\sup_{\phi\in\cf_k(x)}|\langle f,\phi\rangle|.$$
First, for any $k\in\zz$, $i\in\nn$, and $x\in\cx$, we have
\begin{align*}
|Q_{k}f(x)-Q_{k+i}f(x)|\leq\sum_{j=0}^{i-1}|Q_{k+j}f(x)-Q_{k+j+1}f(x)|
=\sum_{j=0}^{i-1}|\langle f,Q_{k+j}(x,\cdot)-Q_{k+j+1}(x,\cdot)\rangle|.
\end{align*}
Note that, by \eqref{10.23.3} and \eqref{10.23.4},
we obtain, for any $k\in\zz$, and $x\in\cx$,
$$Q_{k+j}(x,\cdot)-Q_{k+j+1}(x,\cdot)\in\cf_{k+j+1}(x).$$
From this, we deduce that
\begin{align}\label{q-ca}
|Q_{k}f(x)-Q_{k+i}f(x)|\leq \sum_{j=0}^{i-1}\delta^{(k+j+1)s}g(x)\lesssim \delta^{ks}g(x).
\end{align}
We now consider the case $p\in(1,\infty)$ first.
To this end, note that \eqref{q-ca} implies
that $\{Q_{k}f-Q_{k+i}f\}_{i\in\nn}$ is
a Cauchy sequence in $L^p(\cx)$.
By the completeness of $L^p(\cx)$,
we find that there exists an $f_k\in L^p(\cx)$ such that
$$\lim_{i\to\infty}(Q_{k}f-Q_{k+i}f)=f_k$$
both in $L^p(\cx)$ and pointwise.
Observe that, for any $k,\ k'\in\zz$, we have
\begin{align*}
f_k&=\lim_{i\to\infty}(Q_{k}f-Q_{k+i}f)
=Q_{k}f-Q_{k'}f+\lim_{i\to\infty}(Q_{k'}f-Q_{k+i}f)\\
&=Q_{k}f-Q_{k'}f+f_{k'}
\end{align*}
both in $L^p(\cx)$ and pointwise.
Let $\widetilde{f}:=Q_0f-f_0$. Then, by this,  we conclude that
$\widetilde{f}\in L^1_{\loc}(\cx)$ and,
for any $k\in\zz$, $\widetilde{f}=Q_kf-f_k$.
On the other hand, since $Q_kf\to f$ in $(\cgg)'$ as $k\to\infty$,
it follows that, for any $\psi\in\cgg$,
\begin{align*}
\langle \widetilde{f},\psi\rangle&=\int_{\cx}\widetilde{f}(x)\psi(x)\,d\mu(x)\\
&=\int_{\cx}
\left\{Q_0f(x)-\lim_{i\to \infty}[Q_0f(x)-Q_if(x)]\right\}\psi(x)\,d\mu(x)\\
&=\int_{\cx}
Q_0f(x)\psi(x)\,d\mu(x)-
\lim_{i\to \infty}\int_{\cx}[Q_0f(x)-Q_if(x)]\psi(x)\,d\mu(x)\\
&=\lim_{i\to \infty}\int_{\cx}
Q_if(x)\psi(x)\,d\mu(x)
=\lim_{i\to \infty}\left\langle Q_if, \psi\right\rangle = \langle f, \psi\rangle,
\end{align*}
which completes the proof of this lemma in the case when $p\in(1,\infty)$.

Next, we consider the case $p\in(\omega/(\omega+s),1]$.
For any $x,\ y\in\cx$, let $k_0\in\zz$ be such that $\delta^{k_0+1}<d(x,y)\leq \delta^{k_0}$.
Then, by Lemma \ref{expfk} and \eqref{q-ca}, we know that, for any $k\in\zz$ with $k>k_0$,
\begin{align}\label{q-gi}
|Q_kf(x)-Q_kf(y)|&\leq|Q_kf(x)-Q_{k_0}f(x)|+
|Q_{k_0}f(x)-Q_{k_0}f(y)|\noz\\
&\quad+|Q_{k_0}f(y)-Q_kf(y)|\noz\\
&\lesssim \delta^{k_0s}[g(x)+g(y)]\lesssim [d(x,y)]^s[g(x)+g(y)].
\end{align}
On the other hand, we find that, for any $k\in\zz$ with $k\leq k_0$,
$$|Q_kf(x)-Q_kf(y)|=[d(x,y)]^s\delta^{-ks}\left\langle f, \delta^{ks}
[d(x,y)]^{-s}[Q_k(x,\cdot)-Q_k(y,\cdot)]\right\rangle.$$
Note that, due to $k\leq k_0$, then $d(x,y)\leq  \delta^k$.
From this and Lemma \ref{expfk}, we deduce that
$$\delta^{ks}[d(x,y)]^{-s}[Q_k(x,\cdot)-Q_k(y,\cdot)]\in\cf_k(x),$$
which implies that
\begin{equation}\label{q-gii}
|Q_kf(x)-Q_kf(y)|\lesssim [d(x,y)]^s[g(x)+g(y)].
\end{equation}
Combining \eqref{q-gi} and \eqref{q-gii}, we know that, for any $k\in\zz$,
\begin{equation}\label{q-g}
|Q_kf(x)-Q_kf(y)|\lesssim [d(x,y)]^s[g(x)+g(y)].
\end{equation}
Moreover, by $f\in\ca\dot{F}^s_{p,\infty}(\cx)$, we know that $g\in L^p(\cx)$.
From this and \eqref{q-g}, we further deduce that, for any $k\in\zz$,
$$Q_kf\in \dot{M}^{s,p}(\cx)\quad\text{and}\quad\|Q_kf\|_{\dot{M}^{s,p}(\cx)}
\lesssim \|g\|_{L^p(\cx)}\sim\|f\|_{\ca\dot{F}^s_{p,\infty}(\cx)}.$$
By this and Corollary \ref{l-past}, we find that, for any $k\in\zz$,
there exists a constant $C_k$ such that
$$Q_kf-C_k\in L^{p^\ast}(\cx)\quad\text{and}\quad\|Q_kf-C_k\|_{L^{p^\ast}(\cx)}
\lesssim \|Q_kf\|_{\dot{M}^{s,p}(\cx)}\lesssim\|f\|_{\ca\dot{F}^s_{p,\infty}(\cx)}.$$
From this and the weak compactness property of $L^{p^\ast}(\cx)$ (recall that $p^\ast>1$ in this case),
we deduce that there exist a subsequence $\{Q_{k_j}f-C_{k_j}\}_{j\in\nn}$
and a function $\widetilde{f}\in L^{p^\ast}(\cx)$ such that
$$\widetilde{f}=\lim_{j\to\infty}[Q_{k_j}f-C_{k_j}]$$
both weakly in $L^{p^\ast}(\cx)$ and also in $(\cgg)'$.
Besides, by \eqref{q-ca},
we find that, for any $j,\ j'\in\nn$, $Q_{k_j}f-Q_{k_{j'}}f\in L^p(\cx)$
and $[Q_{k_j}f-C_{k_{j}}]-[Q_{k_{j'}}f-C_{k_{j'}}]\in L^{p^\ast}(\cx)$,
which further implies that $C_{k_{j}}=C_{k_{j'}}$.
By the fact that $Q_{k_j}f\to f$ in $(\cgg)'$ as $j\to\infty$,
we conclude that
$$f=f-C_{k_0}=\lim_{j\to\infty}[Q_{k_j}f-C_{k_j}]=\widetilde{f}$$
in $(\cgg)'$. This finishes the proof of Lemma \ref{btl-loc}.
\end{proof}

We now present the proof of Theorem \ref{hbtl-gbtl}.
\begin{proof}[Proof of Theorem \ref{hbtl-gbtl}]
By similarity, we only prove (i).
We first show $\dot{M}^s_{p,q}(\cx)\subset \ca\hf$.
To this end, fix $u\in\dot{M}^s_{p,q}(\cx)$ and recall that $u$
is locally integrable on $\cx$ by Corollary~\ref{pc-int}. With this in mind, we
consider five cases on $p$ and $q$.

{\it Case 1)} $p\in (1,\infty)$ and $q\in (1,\infty]$.
In this case, we only consider the case $q\in(1,\infty)$,
because the proof of the case $q=\infty$ is similar to that of $q\in(1,\infty)$.
Choose $\{g_k\}_{k\in\zz}\in \dd^s(u)$
such that
$$\left\|\left(\sum_{k=-\infty}^\infty g_k^q\right)^{\frac{1}{q}}\right\|_{L^p(\cx)}
\lesssim \|u\|_{\dot{M}^s_{p,q}(\cx)}.$$
Then we have, for any $k\in\zz$, $x\in\cx$, and $\phi\in\cf_k(x)$,
\begin{align*}
|\langle u,\phi\rangle|&=\left|\int_{\cx}\phi(y)u(y)\,d\mu(y)\right|
=\left|\int_{\cx}\phi(y)\left[u(y)-u_{B(x,\delta^{k})}\right]\,d\mu(y)\right|\\
&\leq \int_{\cx}D_\gamma\left(x,y;\delta^k\right)
\left|u(y)-u_{B(x,\delta^{k})}\right|\,d\mu(y)\\
&\lesssim \sum_{j=0}^\infty\delta^{j\gamma}\oint_{B(x,\delta^{k-j})}
\left|u(y)-u_{B(x,\delta^{k})}\right|\,d\mu(y),
\end{align*}
where $D_\gamma(x,y;\delta^k)$ is as in \eqref{decay}.
Notice that, for any $k\in\zz$, $j\in\zz_+$, and $x\in\cx$,
\begin{align*}
&\oint_{B(x,\delta^{k-j})}\left|u(y)-u_{B(x,\delta^{k})}\right|\,d\mu(y)\\
&\quad= \oint_{B(x,\delta^{k-j})}\left|u(y)-u_{B(x,\delta^{k-1})}
+u_{B(x,\delta^{k-1})}-u_{B(x,\delta^{k})}\right|\,d\mu(y)\\
&\quad\leq \oint_{B(x,\delta^{k-j})}\left|u(y)-u_{B(x,\delta^{k-1})}\right|\,d\mu(y)
+\frac{1}{V_{\delta^k}(x)}\int_{B(x,\delta^{k})}\left|u(y)-u_{B(x,\delta^{k-1})}\right|\,d\mu(y)\\
&\quad\lesssim \oint_{B(x,\delta^{k-j})}\left|u(y)-u_{B(x,\delta^{k-1})}\right|\,d\mu(y)
+\oint_{B(x,\delta^{k-1})}\left|u(y)-u_{B(x,\delta^{k-1})}\right|\,d\mu(y)\\
&\quad\lesssim \sum_{i=0}^{j} \oint_{B(x,\delta^{k-i})}
\left|u(y)-u_{B(x,\delta^{k-i})}\right|\,d\mu(y).
\end{align*}
Combining the above two inequalities, we obtain, for any $k\in\zz$,
\begin{align}\label{u-phi}
|\langle u,\phi\rangle|&\lesssim \sum_{j=0}^\infty\delta^{j\gamma}
\sum_{i=0}^{j}\oint_{B(x,\delta^{k-i})}\left|u(y)-u_{B(x,\delta^{k-i})}\right|\,d\mu(y)\notag\\
&\sim \sum_{i=0}^\infty
\sum_{j=i}^{\infty}\delta^{j\gamma}\oint_{B(x,\delta^{k-i})}
\left|u(y)-u_{B(x,\delta^{k-i})}\right|\,d\mu(y)\notag\\
&\lesssim \sum_{i=0}^\infty\delta^{i\gamma}\oint_{B(x,\delta^{k-i})}
\left|u(y)-u_{B(x,\delta^{k-i})}\right|\,d\mu(y).
\end{align}
From this and Lemma \ref{vpc} (see also Remark~\ref{vpc-R}), we deduce that, for any $k\in\zz$,
\begin{align}\label{u-phi2}
|\langle u,\phi\rangle|&\lesssim \sum_{i=0}^\infty\delta^{i\gamma}
\delta^{(k-i)s}\sum_{j=k-i-3}^{k-i-1}\oint_{B(x,\delta^{k-i-2})}g_j(y)\,d\mu(y)\notag\\
&\lesssim \delta^{ks}\sum_{j=-\infty}^k
\sum_{i=k-j-3}^{k-j-1}\delta^{i(\gamma-s)}
\oint_{B(x,\delta^{k-i-2})}g_j(y)\,d\mu(y)\notag\\
&\lesssim \delta^{k\gamma}\sum_{j=-\infty}^k\delta^{-j(\gamma-s)}
\oint_{B(x,\delta^{j-2})}g_j(y)\,d\mu(y)
\lesssim \delta^{k\gamma}\sum_{j=-\infty}^k\delta^{-j(\gamma-s)}
M(g_j)(x).
\end{align}
By this, the H\"older inequality, and Lemma \ref{fsvv}, we conclude that
\begin{align*}
\|u\|_{\ca\hf}
&\lesssim \left\|\left\{\sum_{k=-\infty}^\infty\delta^{-ksq}
\left[\delta^{k\gamma}\sum_{j=-\infty}^k\delta^{-j(\gamma-s)}
M(g_j)\right]^q\right\}^{1/q}\right\|_{L^p(\cx)}\\
&\lesssim \left\|\left\{\sum_{k=-\infty}^\infty\delta^{k(\gamma-s)}
\sum_{j=-\infty}^k\delta^{-j(\gamma-s)}[M(g_j)]^q\right\}^{1/q}\right\|_{L^p(\cx)}\\
&\lesssim \left\|\left\{\sum_{j=-\infty}^\infty[M(g_j)]^q\right\}^{1/q}\right\|_{L^p(\cx)}
\lesssim \left\|\left\{\sum_{j=-\infty}^\infty g_j^q\right\}^{1/q}\right\|_{L^p(\cx)}
\lesssim \|u\|_{\dot{M}^s_{p,q}(\cx)},
\end{align*}
which is the desired estimate in this case.

{\it Case 2)} $p\in(1,\infty)$ and $q\in (\omega/(\omega+s),1]$.
In this case, choose $\{g_k\}_{k\in\zz}\in \dd^s(u)$ as in Case~1).
Recall that we may assume $A_0\delta^{p/\omega}<1$. As such,
combining \eqref{u-phi} and Lemma \ref{vpc2} [with $p=\omega/(\omega+\varepsilon)<1$], we find that
for any fixed $\varepsilon,\ \varepsilon'\in(0,s)$ with $\varepsilon<\varepsilon'$, there holds true
\begin{align}\label{u-phi3}
|\langle u,\phi\rangle|
&\lesssim \sum_{i=0}^\infty\delta^{i\gamma} \delta^{(k-i)\varepsilon'}
\sum_{j=k-i-2}^\infty\delta^{j(s-\varepsilon')}
\left\{\oint_{B(x,\delta^{k-i-1})}[g_j(y)]^{\frac{\omega}{\omega+\varepsilon}}
\,d\mu(y)\right\}^{\frac{\omega+\varepsilon}{\omega}}\notag\\
&\lesssim \delta^{k\varepsilon'}\sum_{j=-\infty}^{k-2}\delta^{j(s-\varepsilon')}
\left[M\left([g_j(x)]^{\frac{\omega}{\omega
+\varepsilon}}\right)\right]^{\frac{\omega+\varepsilon}{\omega}}
\sum_{i=k-j-2}^\infty\delta^{i(\gamma-\varepsilon')}\notag\\
&\quad+\delta^{k\varepsilon'}\sum_{j=k-1}^\infty\delta^{j(s-\varepsilon')}
\left[M\left([g_j(x)]^{\frac{\omega}{\omega+
\varepsilon}}\right)\right]^{\frac{\omega+\varepsilon}{\omega}}
\sum_{i=0}^\infty\delta^{i(\gamma-\varepsilon')}\notag\\
&\lesssim \delta^{k\gamma}\sum_{j=-\infty}^{k-2}\delta^{j(s-\gamma)}
\left[M\left([g_j(x)]^{\frac{\omega}{\omega+
\varepsilon}}\right)\right]^{\frac{\omega+\varepsilon}{\omega}}
+\delta^{k\varepsilon'}\sum_{j=k-1}^\infty\delta^{j(s-\varepsilon')}
\left[M\left([g_j(x)]^{\frac{\omega}{\omega
+\varepsilon}}\right)\right]^{\frac{\omega+\varepsilon}{\omega}}.
\end{align}
Choosing $\varepsilon\in(0,s)$ such that
$\frac{\omega}{\omega+s}<\frac{\omega}{\omega+\varepsilon}<q$, and using \eqref{u-phi3},
\eqref{r} (with $\theta=q$),
and Lemma \ref{fsvv}, we conclude that
\begin{align*}
\|u\|_{\ca\hf}&\lesssim \left\|\left(\sum_{k=-\infty}^\infty\delta^{-ksq}
\left\{\delta^{k\gamma}\sum_{j=-\infty}^{k-2}\delta^{j(s-\gamma)}
\left[M\left([g_j]^{\frac{\omega}{\omega
+\varepsilon}}\right)\right]^{\frac{\omega
+\varepsilon}{\omega}}\right.\right.\right.\\
&\quad\left.\left.\left.+\delta^{k\varepsilon'}\sum_{j=k-1}^\infty\delta^{j(s-\varepsilon')}
\left[M\left([g_j]^{\frac{\omega}{\omega+\varepsilon}}\right)
\right]^{\frac{\omega+\varepsilon}{\omega}}\right\}^q\right)^{1/q}\right\|_{L^p(\cx)}\\
&\lesssim \left\|\left\{\sum_{k=-\infty}^\infty\delta^{-ksq}
\delta^{k\gamma q}\sum_{j=-\infty}^{k-2}\delta^{j(s-\gamma)q}
\left[M\left([g_j]^{\frac{\omega}{\omega+\varepsilon}}\right)\right]^
{\frac{(\omega+\varepsilon)q}{\omega}}\right\}^{1/q}\right\|_{L^p(\cx)}\\
&\quad+\left\|\left\{\sum_{k=-\infty}^\infty\delta^{-ksq}
\delta^{k\varepsilon' q}\sum_{j=k-1}^\infty\delta^{j(s-\varepsilon')q}
\left[M\left([g_j]^{\frac{\omega}{\omega+\varepsilon}}\right)\right]^
{\frac{(\omega+\varepsilon)q}{\omega}}\right\}^{1/q}\right\|_{L^p(\cx)}\\
&\lesssim \left\|\left\{\sum_{j=-\infty}^\infty
\left[M\left([g_j]^{\frac{\omega}{\omega+\varepsilon}}\right)\right]^
{\frac{(\omega+\varepsilon)q}{\omega}}\right\}^{1/q}\right\|_{L^p(\cx)}
\lesssim \left\|\left\{\sum_{j=-\infty}^\infty
g_j^q\right\}^{1/q}\right\|_{L^p(\cx)}\\
&\lesssim \|u\|_{\dot{M}^s_{p,q}(\cx)}.
\end{align*}
This is the desired estimate in this case.

{\it Case 3)} $p\in (\omega/(\omega+s),1]$ and $q\in (\omega/(\omega+s),\infty]$.
In this case, the proof of Theorem \ref{hbtl-gbtl} is
similar to that of Case 2); the details are omitted.

{\it Case 4)} $p=\infty$ and $q\in (1,\infty]$.
In this case, we only consider the case $q\in(1,\infty)$
because the proof of the case $q=\infty$ is similar to that of $q\in(1,\infty)$.
Choose $\{g_k\}_{k\in\zz}\in \dd^s(u)$
such that
\begin{equation}\label{ch-g}
\sup_{k\in\zz}\sup_{x\in\cx}
\left\{\sum_{j=k}^\infty\oint_{B(x,\delta^k)} [g_j(y)]^q\,d\mu(y)\right\}^{\frac{1}{q}}
\lesssim \|u\|_{\dot{M}^s_{\infty,q}(\cx)}.
\end{equation}
By \eqref{u-phi2} and the H\"older inequality, we know that, for any $l\in\zz$ and $x\in\cx$,
\begin{align*}
&\oint_{B(x,2A_0C_0\delta^l)}\sum_{k=l}^\infty\delta^{-ksq}
\sup_{\phi\in\cf_k(z)}|\langle u,\phi\rangle|^q\,d\mu(z)\\
&\quad\lesssim\oint_{B(x,2A_0C_0\delta^l)}\sum_{k=l}^\infty
\delta^{-ksq}\left[\delta^{k\gamma}\sum_{j=-\infty}^k\delta^{-j(\gamma-s)}
\oint_{B(z,\delta^{j-2})}g_j(y)\,d\mu(y)\right]^q\,d\mu(z)\\
&\quad \lesssim \oint_{B(x,2A_0C_0\delta^l)}\sum_{k=l}^\infty\delta^{k(\gamma-s)}
\sum_{j=-\infty}^k\delta^{-j(\gamma-s)}
\left[\oint_{B(z,\delta^{j-2})}g_j(y)\,d\mu(y)\right]^q\,d\mu(z)\\
&\quad \lesssim \oint_{B(x,2A_0C_0\delta^l)}\sum_{k=l}^\infty\delta^{k(\gamma-s)}
\sum_{j=-\infty}^{l-1}\delta^{-j(\gamma-s)}
\left[\oint_{B(z,\delta^{j-2})}g_j(y)\,d\mu(y)\right]^q\,d\mu(z)\\
&\quad\quad+\oint_{B(x,2A_0C_0\delta^l)}\sum_{k=l}^\infty\delta^{k(\gamma-s)}
\sum_{j=l}^k\delta^{-j(\gamma-s)}
\left[\oint_{B(z,\delta^{j-2})}g_j(y)\,d\mu(y)\right]^q\,d\mu(z)\\
&\quad =: \mathrm{Y}_1+\mathrm{Y}_2.
\end{align*}
To estimate $\mathrm{Y}_1$, by the H\"older inequality with exponent $q>1$,
and \eqref{ch-g}, we find that, for any $j\in\zz$ and $z\in\cx$,
$$\left[\oint_{B(z,\delta^{j-2})}g_j(y)\,d\mu(y)\right]^q
\leq \oint_{B(z,\delta^{j-2})}[g_j(y)]^q\,d\mu(y)\lesssim \|u\|_{\dot{M}^s_{\infty,q}(\cx)}^q.$$
From this, we further deduce that
\begin{align*}
\mathrm{Y}_1\lesssim \|u\|_{\dot{M}^s_{\infty,q}(\cx)}^q
\oint_{B(x,2A_0C_0\delta^l)}\sum_{k=l}^\infty\delta^{k(\gamma-s)}
\sum_{j=-\infty}^{l-1}\delta^{-j(\gamma-s)}\,d\mu(z)
\lesssim \|u\|_{\dot{M}^s_{\infty,q}(\cx)}^q.
\end{align*}
To estimate $\mathrm{Y}_2$, note that, for any $j\in\zz$ with $j\geq l$,
$z\in B(x,2A_0C_0\delta^l)$, and $y\in B(z,\delta^{j-2})$, we have
$$d(y,x)\leq A_0[d(y,z)+d(z,x)]<A_0\delta^{j-2}+2A_0^2C_0\delta^l
\leq(A_0\delta+2A_0^2C_0\delta^3)\delta^{l-3}\leq\delta^{l-3},$$
which further implies that
\begin{align*}
\mathrm{Y}_2&\lesssim \oint_{B(x,2A_0C_0\delta^l)}
\sum_{k=l}^\infty\delta^{k(\gamma-s)}
\sum_{j=l}^k\delta^{-j(\gamma-s)}\left[\oint_{B(z,\delta^{j-2})}
g_j(y)\mathbf{1}_{B(x, {\delta^{l-3}})}(y)\,d\mu(y)\right]^q\,d\mu(z)\\
&\lesssim \oint_{B(x,2A_0C_0\delta^l)}\sum_{k=l}^\infty\delta^{k(\gamma-s)}
\sum_{j=l}^k\delta^{-j(\gamma-s)}
\left[M\left(g_j\mathbf{1}_{B(x,
\delta^{l-3})}\right)(z)\right]^q\,d\mu(z)\\
&\lesssim \sum_{j=l}^\infty\oint_{B(x,2A_0C_0\delta^l)}
\left[M\left(g_j\mathbf{1}_{B(x,
\delta^{l-3})}\right)(z)\right]^q\,d\mu(z)\\
&\lesssim \sum_{j=l}^\infty\oint_{B(x,2A_0C_0\delta^l)}
\left[g_j(z)\mathbf{1}_{B(x,\delta^{l-3})}(z)\right]^q\,d\mu(z)\\
&\lesssim \sum_{j=l-3}^\infty\oint_{B(x,\delta^{l-3})}
\left[g_j(z)\right]^q\,d\mu(z)\lesssim \|u\|_{\dot{M}^s_{\infty,q}(\cx)}^q.
\end{align*}
Combining the estimates of $\mathrm{Y}_1$ and $\mathrm{Y}_2$,
we conclude that, for any $l\in\zz$ and $x\in\cx$,
$$\oint_{B(x,2A_0C_0\delta^l)}\sum_{k=l}^\infty\delta^{-ksq}
\sup_{\phi\in\cf_k(z)}|\langle u,\phi\rangle|^q\,d\mu(z)
\lesssim \|u\|_{\dot{M}^s_{\infty,q}(\cx)}^q,$$
which, together with Lemma~\ref{2-cube}(iii), implies
the desired estimate in this case.

{\it Case 5)} $p=\infty$ and $q\in (\omega/(\omega+s),1]$.
In this case, choose $\{g_k\}_{k\in\zz}\in \dd^s(u)$ as in Case 4).
Arguing as in \eqref{u-phi3}, we have, for any fixed $\varepsilon,\ \varepsilon'\in(0,s)$
with $\varepsilon<\varepsilon'$,
\begin{align*}
&\oint_{B(x,2A_0C_0\delta^l)}\sum_{k=l}^\infty\delta^{-ksq}
\sup_{\phi\in\cf_k(z)}|\langle u,\phi\rangle|^q\,d\mu(z)\\
&\quad \lesssim\oint_{B(x,2A_0C_0\delta^l)}\sum_{k=l}^\infty\delta^{-ksq}
\sum_{i=0}^\infty\delta^{i\gamma q} \delta^{(k-i)\varepsilon'q}\\
&\quad\quad\times\sum_{j=k-i-2}^\infty\delta^{j(s-\varepsilon')q}
\left\{\oint_{B(z,\delta^{k-i-1})}[g_j(y)]^{\frac{\omega}{\omega+\varepsilon}}
\,d\mu(y)\right\}^{\frac{(\omega+\varepsilon)q}{\omega}}\,d\mu(z)\\
&\quad \sim\oint_{B(x,2A_0C_0\delta^l)}\sum_{k=l}^\infty\delta^{-ksq}
\sum_{m=-\infty}^k\delta^{(k-m)\gamma q} \delta^{m\varepsilon'q}\\
&\quad\quad\times\sum_{j=m-2}^\infty\delta^{j(s-\varepsilon')q}
\left\{\oint_{B(z,\delta^{m-1})}[g_j(y)]^{\frac{\omega}{\omega+\varepsilon}}
\,d\mu(y)\right\}^{\frac{(\omega+\varepsilon)q}{\omega}}\,d\mu(z)\\
&\quad\lesssim \oint_{B(x,2A_0C_0\delta^l)}\sum_{m=-\infty}^\infty
\delta^{(l\vee m)(\gamma-s)q}\delta^{m(\varepsilon'-\gamma)q}\\
&\quad\quad\times\sum_{j=m-2}^\infty\delta^{j(s-\varepsilon')q}
\left\{\oint_{B(z,\delta^{m-1})}[g_j(y)]^{\frac{\omega}{\omega+\varepsilon}}
\,d\mu(y)\right\}^{\frac{(\omega+\varepsilon)q}{\omega}}\,d\mu(z)\\
&\quad \lesssim \oint_{B(x,2A_0C_0\delta^l)}\sum_{m=-\infty}^l
\delta^{l(\gamma-s)q}\delta^{m(\varepsilon'-\gamma)q}\\
&\quad\quad\times\sum_{j=m-2}^\infty\delta^{j(s-\varepsilon')q}
\left\{\oint_{B(z,\delta^{m-1})}[g_j(y)]^{\frac{\omega}{\omega+\varepsilon}}
\,d\mu(y)\right\}^{\frac{(\omega+\varepsilon)q}{\omega}}\,d\mu(z)\\
&\quad\quad+ \oint_{B(x,2A_0C_0\delta^l)}
\sum_{m=l+1}^\infty\delta^{m(\varepsilon'-s)q}\cdots\\
&\quad=:\mathrm{Y}_3+\mathrm{Y}_4.
\end{align*}
To estimate $\mathrm{Y}_3$, choosing $\varepsilon\in(0,s)$
such that $\omega/(\omega+s)<\omega/(\omega+\varepsilon)<q$,
and using the H\"older inequality and \eqref{ch-g}, we find that,
for any $m\in\zz$ and $z\in\cx$,
$$\left\{\oint_{B(z,\delta^{m-1})}[g_j(y)]^{\frac{\omega}{\omega+\varepsilon}}
\,d\mu(y)\right\}^{\frac{(\omega+\varepsilon)q}{\omega}}
\leq \oint_{B(z,\delta^{m-1})}[g_j(y)]^q\,d\mu(y)
\lesssim \|u\|_{\dot{M}^s_{\infty,q}(\cx)}^q,$$
which implies that
$$\mathrm{Y}_3\lesssim \|u\|_{\dot{M}^s_{\infty,q}(\cx)}^q
\sum_{m=-\infty}^l\delta^{l(\gamma-s)q}\delta^{m(\varepsilon'-\gamma)q}
\sum_{j=m-2}^\infty\delta^{j(s-\varepsilon')q}
\lesssim \|u\|_{\dot{M}^s_{\infty,q}(\cx)}^q.$$
To estimate $\mathrm{Y}_4$, observe that, for any $m\in\zz$ with $m\geq l+1$,
$z\in B(x,2A_0C_0\delta^l)$, and $y\in B(z,\delta^{m-1})$, we have
$$d(y,x)\leq A_0[d(y,z)+d(z,x)]<A_0\delta^{m-2}+2A_0^2C_0\delta^l
\leq(A_0\delta+2A_0^2C_0\delta^2)\delta^{l-2}\leq\delta^{l-2},$$
which further implies that
\begin{align*}
\mathrm{Y}_4&\lesssim \oint_{B(x,2A_0C_0\delta^l)}
\sum_{m=l+1}^\infty\delta^{m(\varepsilon'-s)q}
\sum_{j=m-2}^\infty\delta^{j(s-\varepsilon')q}\\
&\quad\times\left\{\oint_{B(z,\delta^{m-1})}[g_j(y)
\mathbf{1}_{B(x,\delta^{l-2})}(y)]^{\frac{\omega}{\omega+\varepsilon}}
\,d\mu(y)\right\}^{\frac{(\omega+\varepsilon)q}{\omega}}\,d\mu(z)\\
&\lesssim \oint_{B(x,2A_0C_0\delta^l)}\sum_{m=l+1}^\infty
\delta^{m(\varepsilon'-s)q}\sum_{j=m-2}^\infty\delta^{j(s-\varepsilon')q}
\left\{M\left([g_j\mathbf{1}_{B(x,\delta^{l-2})}]^{\frac{\omega}{\omega+\varepsilon}}\right)(z)
\right\}^{\frac{(\omega+\varepsilon)q}{\omega}}\,d\mu(z)\\
&\lesssim \oint_{B(x,2A_0C_0\delta^l)}\sum_{j=l-1}^\infty
\left\{M\left([g_j\mathbf{1}_{B(x,\delta^{l-2})}]^{\frac{\omega}{\omega+\varepsilon}}\right)(z)
\right\}^{\frac{(\omega+\varepsilon)q}{\omega}}\,d\mu(z)\\
&\lesssim \sum_{j=l-2}^\infty\oint_{B(x,\delta^{l-2})}
\left[g_j(z)\right]^q\,d\mu(z)\lesssim \|u\|_{\dot{M}^s_{\infty,q}(\cx)}^q.
\end{align*}
Combining the estimates of $\mathrm{Y}_3$ and $\mathrm{Y}_4$,
we conclude that, for any $l\in\zz$ and $x\in\cx$,
$$
\oint_{B(x,2A_0C_0\delta^l)}\sum_{k=l}^\infty\delta^{-ksq}
\sup_{\phi\in\cf_k(z)}|\langle u,\phi\rangle|^q\,d\mu(z)
\lesssim \|u\|_{\dot{M}^s_{\infty,q}(\cx)}^q,
$$
which, together with Lemma~\ref{2-cube}(iii), implies the desired estimate in this case.

Thus, we have $\dot{M}^s_{p,q}(\cx)\subset\ca\hf$.

We finally show $\ca\hf\subset\dot{M}^s_{p,q}(\cx)$. Assume that $f\in\ca\hf$.
By Lemma \ref{btl-loc} and its proof, we know that there
exist a subsequence $\{Q_{k_j}f\}_{j\in\nn}$ and a constant $C\in\rr$
such that, for almost every $x\in\cx$, $\lim_{j\to\infty}Q_{k_j}f(x)=f(x)-C$.
For any $k\in\zz$ and $x\in\cx$, let
$$g_k(x):=\delta^{-ks}\sup_{\phi\in\cf_k(x)}|\langle f,\phi\rangle|.$$
For almost every $x,\ y\in\cx$ and $k_0\in\zz$ satisfying
$\delta^{k_0+1}\leq d(x,y)<\delta^{k_0}$, we can find a $j_0\in\nn$ satisfying
$k_0\leq k_{j_0}$ and we can estimate
\begin{align*}
|f(x)-f(y)|
&=\left|f(x)-Q_{k_{j_0}}f(x)+Q_{k_{j_0}}f(x)
-Q_{k_{j_0}}f(y)+Q_{k_{j_0}}f(y)-f(y)\right|\\
&\leq \left|Q_{k_{j_0}}f(x)-Q_{k_{j_0}}f(y)\right|
+\sum_{j=j_0}^\infty \left[\left|Q_{k_{j+1}}f(x)-Q_{k_j}f(x)\right|
+\left|Q_{k_{j+1}}f(x)-Q_{k_j}f(x)\right|\right]\\
&\leq \delta^{k_{j_0}s}[g_{k_{j_0}}(x)+g_{k_{j_0}}(y)]
+\sum_{k=k_0}^\infty \left[\left|Q_{k+1}f(x)-Q_kf(x)\right|
+\left|Q_{k+1}f(x)-Q_{k}f(x)\right|\right]\\
&\leq 2\sum_{k=k_0}^\infty\delta^{ks}[g_k(x)+g_k(y)].
\end{align*}
For any $k\in\zz$, define
$$h_k:=2\sum_{j=k}^\infty\delta^{(-k+j-1)s}g_j.$$
We then have, for almost every $x,\ y\in\cx$
with  $\delta^{k+1}\leq d(x,y)<\delta^k$,
\begin{align*}
|f(x)-f(y)|&\leq 2\sum_{j=k}^\infty\delta^{js}[g_j(x)+g_j(y)]
\leq \delta^{(k+1)s}[h_k(x)+h_k(y)]\\
&\leq [d(x,y)]^s[h_k(x)+h_k(y)],
\end{align*}
which implies that $\{h_k\}_{k\in\zz}\in\dd^s(f)$.
Note that, by the H\"older inequality when $q\in(1,\infty]$,
or \eqref{r} when $q\in(\omega/(\omega+s),1]$, we obtain
\begin{align*}
\sum_{k=-\infty}^\infty h_k^q
\lesssim \sum_{k=-\infty}^\infty
\left[\sum_{j=k}^\infty\delta^{(-k+j-1)s}g_j\right]^q
\lesssim\sum_{k=-\infty}^\infty\sum_{j=k}^\infty\delta^{(-k+j-1)sq/2}g_j^q
\lesssim \sum_{j=-\infty}^\infty g_j^q,
\end{align*}
which implies that, for any given $p \in(\omega/(\omega+s),\infty)$
and $q \in(\omega/(\omega+s),\infty]$,
\begin{align*}
\|f\|_{\dot{M}^s_{p,q}(\cx)}
\leq \left\|\left(\sum_{k=\infty}^\infty h_k^q\right)^{1/q}\right\|_{L^p(\cx)}
\lesssim \left\|\left(\sum_{j=\infty}^\infty g_j^q\right)^{1/q}\right\|_{L^p(\cx)}
\sim \|f\|_{\ca\hf}.
\end{align*}
When $p=\infty$, then, by the H\"older inequality when $q\in(1,\infty]$, or
\eqref{r} when $q\in(\omega/(\omega+s),1)$, we have, for any $l\in\zz$,
\begin{align*}
\sum_{k=l}^\infty h_k^q
\lesssim \sum_{k=l}^\infty\left[\sum_{j=k}^\infty\delta^{(-k+j-1)s}g_j\right]^q
\lesssim\sum_{k=l}^\infty\sum_{j=k}^\infty\delta^{(-k+j-1)sq/2}g_j^q
\lesssim \sum_{j=l}^\infty g_j^q,
\end{align*}
which implies that, for any $x\in\cx$,
$$\sum_{k=l}^\infty\oint_{B(x,\delta^l)} [h_k(y)]^q\,d\mu(y)
\lesssim  \sum_{j=l}^\infty \oint_{B(x,\delta^l)}[g_j(y)]^q\,d\mu(y).$$
From this, we deduce that $\|f\|_{\dot{M}^s_{\infty,q}(\cx)}\lesssim \|f\|_{\ca\hfi}$,
which completes the proof of (i)
and hence of Theorem \ref{hbtl-gbtl}.
\end{proof}

Now, we can show Theorem \ref{mr}.
\begin{proof}[Proof of Theorem \ref{mr}]
The Theorem \ref{mr} is a direct corollary of Theorems \ref{btl-gbtl} and
\ref{hbtl-gbtl}; we omit the details.
\end{proof}

\section{Pointwise characterization of inhomogeneous
Besov and Triebel--Lizorkin spaces}\label{s4}

In this section, we establish the inhomogeneous
version of Theorem  \ref{mr}.
Let us begin with the notion of inhomogeneous approximations
of the identity with exponential decay (see \cite[Definition 6.1]{hlyy}).

\begin{definition}\label{iati}
Let $\eta\in(0,1)$ be as in Definition \ref{exp-ati}.
A sequence $\{Q_k\}_{k\in\zz_+}$ of bounded linear integral operators on
$L^2(\cx)$ is called an  \emph{inhomogeneous approximation
of the identity with exponential decay} (for short,
exp-IATI) if $\{Q_k\}_{k\in\zz_+}$ has the following properties:
\begin{enumerate}
\item[{\rm(i)}] $\sum_{k=0}^\infty Q_k=I$ in $L^2(\cx)$;
\item[{\rm(ii)}] for any $k\in\nn$, $Q_k$
satisfies (ii) through (v) of Definition \ref{exp-ati};
\item[{\rm(iii)}] $Q_0$ satisfies (ii), (iii), and (iv) of Definition
\ref{exp-ati} with $k=0$ but without the decay factor
$$\exp\left\{-\nu\left[\max\left\{d(x,\cy^0),d(y,\cy^0)\right\}\right]^a\right\};$$
moreover, for any $x\in\cx$,
\begin{equation}\label{e6.1}
\int_\cx Q_0(x,y)\,d\mu(y)=1=\int_\cx Q_0(y,x)\,d\mu(y).
\end{equation}
\end{enumerate}
\end{definition}

\begin{remark} As was pointed out in \cite[Remark 6.2]{hlyy},
the existence of an exp-IATI on $\cx$ is guaranteed by the
main results from \cite{ah13}.
In Definition \ref{iati}, due to \eqref{e6.1},
we do not need $\diam \cx = \infty$ to guarantee the existence of an exp-IATI on $\cx$.
In other words, $\diam \cx$ can be finite or infinite.
\end{remark}

Based on the notion of exp-IATIs,
He et al. established the
following inhomogeneous discrete Calder\'on
reproducing formulae in \cite[Theorems 6.10 and 6.13]{hlyy}.

\begin{lemma}\label{icrf}
Let $\{Q_k\}_{k\in\zz_+}$ be an {\rm exp-IATI} and $\beta,\ \gamma\in (0, \eta)$ with
$\eta$ as in Definition \ref{exp-ati}.
For any $k\in\zz_+,$ $\alpha\in\ca_k$, and
$m\in\{1,\dots,N(k,\alpha)\}$,
suppose that $\ya$ is an arbitrary point in $\qa$.
Then there exist an $N\in\nn$ and a
sequence $\{\widetilde{Q}_k\}_{k\in\zz_+}$ of bounded
linear integral operators on $L^2(\mathcal{X})$ such
that, for any $f \in (\icgg)'$,
\begin{align*}
f(\cdot) &= \sum_{\alpha \in \ca_0}\sum_{m=1}^{N(0,\alpha)}\int_{\qo}
\widetilde{Q}_0(\cdot,y)\,d\mu(y)Q^{0,m}_{\alpha,1}(f)\\
&\quad+\sum_{k=1}^N\sum_{\alpha \in \ca_k}\sum_{m=1}^{N(k,\alpha)}
\mu\left(\qa\right)\widetilde{Q}_k(\cdot,\ya)Q^{k,m}_{\alpha,1}(f)\\
&\quad+\sum_{k=N+1}^\infty\sum_{\alpha \in \ca_k}\sum_{m=1}^{N(k,\alpha)}
\mu\left(\qa\right)\widetilde{Q}_k(\cdot,\ya)Q_kf\left(\ya\right)
\end{align*}
in $(\icgg)',$ where, for any $k\in\{0,\ldots,N\}$, $\alpha\in\ca_k$,
$m\in\{1,\dots,N(k,\alpha)\}$,
and $x\in\cx$,
$$Q^{k,m}_{\alpha,1}(x):=\frac{1}{\mu(\qa)}\int_{\qa}Q_k(y,x)\,d\mu(y),$$
and $Q^{k,m}_{\alpha,1}(f):=\langle f, Q^{k,m}_{\alpha,1}\rangle$.
Moreover, for any $k\in\zz_+$, the kernel of $\widetilde{Q}_k$,
still denoted by $\widetilde{Q}_k$,  satisfies
\eqref{4.23x}, \eqref{4.23y}, and the
following integral condition: for any $x\in\mathcal{X}$,
$$\int_{\mathcal{X}}\widetilde{Q}_k(x,y)\,d\mu(y)=
\int_{\mathcal{X}}\widetilde{Q}_k(y,x)\,d\mu(y)=\begin{cases}
1 &\text{if } k \in \{0,\dots,N\},\\
0 &\text{if } k\in \{N+1,N+2,\ldots\}.
\end{cases}$$
\end{lemma}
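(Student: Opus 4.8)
Since Lemma~\ref{icrf} is quoted from \cite[Theorems 6.10 and 6.13]{hlyy}, I would reconstruct it by the two-stage scheme standard for discrete Calder\'on reproducing formulae on spaces of homogeneous type, i.e.\ the inhomogeneous analogue of the argument behind Lemma~\ref{crf}: first produce a \emph{continuous} inhomogeneous reproducing formula, then discretize it and absorb the discretization error by a Neumann series.

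\emph{Stage 1.} Starting from $\sum_{k\in\zz_+}Q_k=I$ on $L^2(\cx)$ (Definition~\ref{iati}(i)), I would expand $I=\sum_{k,l\in\zz_+}Q_kQ_l$ and split according to whether $|k-l|\le M$ or $|k-l|>M$. Almost-orthogonality bounds for the composites $Q_kQ_l$ (size and regularity estimates of the type in Lemma~\ref{pro-qk}, derived from the exp-IATI axioms together with Lemma~\ref{6.15.1}) show that $R_M:=\sum_{|k-l|>M}Q_kQ_l$ has $L^2(\cx)$-operator norm tending to $0$ as $M\to\infty$; hence $T_M:=I-R_M$ is invertible on $L^2(\cx)$ for $M$ large, and $f=\sum_{k\in\zz_+}\widehat{Q}_kQ_kf$ with $\widehat{Q}_k:=\sum_{|l|\le M}T_M^{-1}Q_{k+l}$, first in $L^2(\cx)$ and then, by density and duality, in $(\icgg)'$. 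The delicate point is to verify that each $\widehat{Q}_k$ inherits \eqref{4.23x}, \eqref{4.23y}, and the right integral value: for $k\ge1$ every summand $Q_{k+l}$ satisfies $\int_\cx Q_{k+l}(x,\cdot)\,d\mu=0$, whence $\int_\cx\widehat{Q}_k(x,\cdot)\,d\mu=0$, while the finitely many $k$ whose window $\{k+l:|l|\le M\}$ meets $0$ absorb the unit mass of $Q_0$ and get $\int_\cx\widehat{Q}_k(x,\cdot)\,d\mu=1$; this is precisely where the threshold $N$ of the statement comes from.

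\emph{Stage 2.} Fix $j_0\in\nn$ large enough (in particular $\delta^{j_0}\le(2A_0)^{-3}C_0$), and for each $k$ and $\alpha\in\ca_k$ decompose $Q_\alpha^k$ into the finer cubes $\{\qa\}_{m=1}^{N(k,\alpha)}$ with arbitrary sample points $\ya\in\qa$, as in the setup preceding Lemma~\ref{crf}. In $f(x)=\sum_k\int_\cx\widehat{Q}_k(x,y)Q_kf(y)\,d\mu(y)$ I would replace, on each $\qa$, the integrand $\widehat{Q}_k(x,y)Q_kf(y)$ by $\widehat{Q}_k(x,\ya)Q_kf(\ya)$ when $k$ is large, and by its cube average — which is what manufactures the operators $Q^{k,m}_{\alpha,1}$ — when $k$ is small, the latter being forced because for small $k$ the cubes $\qa$ are too coarse for point sampling and because $\widehat{Q}_0$ has non-vanishing integral. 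This yields $f=S_{j_0}f+E_{j_0}f$, where $S_{j_0}$ is the claimed discrete sum and $E_{j_0}f(x)=\sum_k\sum_{\alpha,m}\int_{\qa}[\widehat{Q}_k(x,y)Q_kf(y)-\widehat{Q}_k(x,\ya)Q_kf(\ya)]\,d\mu(y)$ (with the averaged analogue for small $k$). Invoking the $\eta$- or $\beta$-regularity of $\widehat{Q}_k$ in its second variable, the regularity of $Q_kf$ inherited from the kernel bounds of $Q_k$, and $d(y,\ya)\lesssim\delta^{j_0}\delta^k$, the bracket is $\lesssim\delta^{j_0\eta'}$ times size factors, so $E_{j_0}$ is bounded on $(\icgg)'$ with norm $\lesssim\delta^{j_0\eta'}$.

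\emph{Conclusion, and the hard part.} Taking $j_0$ so large that $\|E_{j_0}\|<1$, the series $(I-E_{j_0})^{-1}=\sum_{n\ge0}E_{j_0}^n$ converges, and from $f=(I-E_{j_0})^{-1}S_{j_0}f$ one pushes $(I-E_{j_0})^{-1}$ inside the sum, replacing each $\widehat{Q}_k(\cdot,\ya)$ (and the $k=0$ averaged kernels) by $[(I-E_{j_0})^{-1}\widehat{Q}_k(\cdot,\ya)](x)$ and renaming the result $\widetilde{Q}_k$; this gives the stated formula with $C$ independent of the $\ya$ and of $f$. I expect the main obstacle to be exactly this last step: one must prove that $E_{j_0}$, hence every $E_{j_0}^n$, hence $(I-E_{j_0})^{-1}$, maps a kernel obeying \eqref{4.23x}--\eqref{4.23y} to one obeying the same bounds with uniformly controlled constant and the correct integral value, so that the final $\widetilde{Q}_k$ satisfy \eqref{4.23x}, \eqref{4.23y}, and the displayed integral condition. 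Establishing this stability of the kernel class under $E_{j_0}$ needs the full almost-orthogonality machinery for exp-IATIs from \cite{hlyy} and repeated use of Lemma~\ref{6.15.1}, and it is here that the split $k\in\{0,\dots,N\}$ versus $k>N$, and the distinction between the averaged $Q^{k,m}_{\alpha,1}$ and the pointwise values $Q_kf(\ya)$, must be tracked carefully; the powers of $A_0$ produced by the quasi-triangle inequality also enter the error bookkeeping and must be kept under control.
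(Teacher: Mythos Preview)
The paper does not prove Lemma~\ref{icrf}; it simply imports the result from \cite[Theorems~6.10 and~6.13]{hlyy}, so there is no in-paper argument to compare against. Your sketch correctly identifies both the source and the standard two-stage scheme used there---continuous inhomogeneous reproducing formula via almost-orthogonality and a Neumann inverse, followed by discretization on the fine cubes $\qa$ with the sampling error again absorbed by a Neumann series---and you are right that the genuinely technical content lies in showing the error operator preserves the kernel class \eqref{4.23x}--\eqref{4.23y} with summable constants. One small slip: your sentence ``for $k\ge1$ every summand $Q_{k+l}$ satisfies $\int Q_{k+l}=0$'' is not literally true, since for $1\le k\le M$ the window $\{k+l:|l|\le M\}$ contains $0$; you correct this in the very next clause, and the threshold $N$ does indeed record exactly which $k$ pick up the unit mass of $Q_0$, but the two sentences as written contradict each other.
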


Now, we recall the notions of inhomogeneous
spaces $\ihb$ and $\ihf$ introduced in \cite{whhy}.
To this end, for any dyadic cube $Q$ and any
non-negative measurable function $f$ on $X$, let
\begin{equation*}
m_Q(f):=\frac 1{\mu(Q)}\int_Q f(y)\,d\mu(y).
\end{equation*}

\begin{definition}\label{ih}
Let  $\beta, \gamma \in (0, \eta)$ with $\eta$ as in Definition \ref{exp-ati},
and $s\in(-(\beta\wedge\gamma),\beta\wedge\gamma)$.
Let $\{Q_k\}_{k\in\zz_+}$ be an exp-IATI and $N\in\nn$ as in Lemma \ref{icrf}.
\begin{enumerate}
\item[{\rm(i)}] If
$p\in(p(s,\beta\wedge\gamma),\infty]$ with $p(s,\beta\wedge\gamma)$
as in \eqref{pseta}, and $q\in(0,\fz]$, then
the \emph{inhomogeneous Besov space $B^s_{p,q}(\cx)$}
is defined to be the set of all $f\in(\icgg)'$ such that
\begin{align*}
\|f\|_{B^{s}_{p,q}(\cx)}:=&\,\lf\{\sum_{k=0}^N\sum_{\az\in\ca_k}
\sum_{m=1}^{N(k,\az)}\mu\lf(Q_\az^{k,m}\r)
\lf[m_{Q_\az^{k,m}}(|Q_kf|)\r]^p\r\}^{1/p}\\
&\quad+\lf[\sum_{k=N+1}^\fz\dz^{-ksq}\|Q_kf\|_{L^p(\cx)}^q\r]^{1/q}\\
<&\,\fz
\end{align*}
with the usual modifications made when $p=\fz$ or $q=\fz$.
\item[{\rm(ii)}] If
$p\in(p(s,\bz\wedge\gz),\fz)$ and
$q\in(p(s,\bz\wedge\gz),\fz]$, then the \emph{inhomogeneous
Triebel--Lizorkin space $F^s_{p,q}(\cx)$} is defined
to be the set of all $f\in(\icgg)'$ such that
\begin{align*}
\|f\|_{F^{s}_{p,q}(\cx)}:=&\lf\{\sum_{k=0}^N\sum_{\az\in\ca_k}
\sum_{m=1}^{N(k,\az)}\mu\lf(Q_\az^{k,m}\r)
\lf[m_{Q_\az^{k,m}}(|Q_kf|)\r]^p\r\}^{1/p}\\
&\quad+\lf\|\lf(\sum_{k=N+1}^\fz\dz^{-ksq}|Q_kf|^q\r)^{1/q}\r\|_{L^p(\cx)}\\
<&\,\fz
\end{align*}
with the usual modification made when $q=\fz$.
\end{enumerate}
\end{definition}

\begin{remark}
\begin{enumerate}
\item[{\rm(i)}] We point out that we \emph{do not} need the
assumption $\mu(\cx)=\infty$ in Definitions \ref{iati} and \ref{ih}.
\item[{\rm(ii)}] It was proved in \cite[Propositions 4.3 and 4.4]{whhy} that,
when $\beta$, $\gamma$, $s$, $p$, and $q$ are as in Definition~\ref{ih},  the
inhomogeneous Besov and Triebel--Lizorkin spaces are
independent of the choices of both  exp-IATIs and
the spaces of distributions.
\end{enumerate}
\end{remark}

We next recall the notions of 1-exp-IATIs (see, for instance, \cite[Definition 3.1]{hyy19})
and the local Hardy space $h^p(\cx)$ (see, for instance, \cite[Section 3]{hyy19}).

\begin{definition}\label{1-exp-iati}
Let $\eta\in(0,1)$ be as in Definition \ref{exp-ati}.
A sequence $\{P_k\}_{k\in\zz_+}$ of bounded linear integral operators
on $L^2(\cx)$ is called  an \emph{inhomogeneous
approximation of the identity with exponential
decay and integration 1} (for short, 1-exp-IATI) if $\{P_k\}_{k\in\zz_+}$
has the following properties:
\begin{enumerate}
\item[{\rm(i)}] for any $k\in\zz_+$, $P_k$
satisfies (ii) and (iii) of Definition \ref{exp-ati} but without the term
$$\exp\left\{-\nu\left[\max\{d(x,\cy^k),d(y,\cy^k)\}\right]^a\right\};$$
\item[{\rm(ii)}] for any $k\in\zz_+$ and $x\in\cx$,
$$\int_\cx P_k(x,y)\,d\mu(y)=1=\int_\cx P_k(y,x)\,d\mu(y);$$
\item[{\rm(iii)}] letting $Q_0:=P_0$ and, for any $k\in\nn$,
$Q_k:=P_k-P_{k-1}$, then $\{Q_k\}_{k\in\zz_+}$ is an exp-IATI.
\end{enumerate}
\end{definition}

\begin{definition}\label{h1}
Let $\cx$ be a space of homogeneous type.
Let $\{P_k\}_{k\in\zz}$ be a 1-exp-IATI.
The \emph{local radial maximal function} $\cm^+_0(f)$ of
$f$ is defined by setting, for any $x\in\cx$,
$$\cm^+_0(f)(x):=\max\left\{\max_{k\in\{0,\dots,N\}}
\left\{\sum_{\alpha\in\ca_k}
\sum_{m=1}^{N(k,\alpha)}\sup_{z\in\qa}|P_kf(z)|\mathbf{1}_{\qa}(x)\right\},
\sup_{k\in\{N+1,N+2,\dots\}}|P_kf(x)|\right\},$$
where $N\in\nn$ is as in Lemma \ref{icrf}.
For any $p\in(0,\infty)$, the \emph{local Hardy space} $h^p(\cx)$ is defined by setting
$$h^p(\cx):=\left\{f\in(\icgg)':\ \|f\|_{h^p(\cx)}:=
\|\cm^+_0(f)\|_{L^p(\cx)}<\infty\right\}.$$
\end{definition}

\begin{remark}
In \cite[Theorem 3.3]{hyy19}, it was shown that, when $p\in(1,\infty]$,
then $h^p(\cx)=L^p(\cx)$. Besides, in \cite[Theorem 6.13]{whhy},
it was  proved that, when $p\in(1,\infty)$, $F^0_{p,2}=L^p(\cx)$.
Moreover,  the Littlewood--Paley $g$-function characterization of $h^p(\cx)$
in \cite[Theorem 5.7]{hyy19} implies that,
for any given $p\in (\omega/(\omega+\eta),1]$,
$F_{p,2}^0(\cx)=h^p(\cx)$.
\end{remark}

Now, we introduce the  notions of inhomogeneous
Haj\l asz--Sobolev spaces, Haj\l asz--Triebel--Lizorkin spaces,
and Haj\l asz--Besov spaces.

\begin{definition}\label{ih-s-btl}
Let $s\in(0,\infty)$.
\begin{enumerate}
\item[{\rm(i)}] Let $p\in(0,\infty)$.
The \emph{inhomogeneous Haj\l asz--Sobolev space}
$M^{s,p}(\cx)$ is defined to be the set of all
measurable functions $u$ on $\cx$ such that
$$\|u\|_{M^{s,p}(\cx)}:=\|u\|_{h^p(\cx)}+\|u\|_{\dot{M}^{s,p}(\cx)}<\infty.$$
\item[{\rm(ii)}] Let $p,\  q\in(0,\infty]$. The \emph{inhomogeneous
Haj\l asz--Triebel--Lizorkin space}
$M^{s}_{p,q}(\cx)$ is defined to be the set of all measurable
functions $u$ on $\cx$ such that
$$\|u\|_{\dot{M}^s_{p,q}(\cx)}:=\|u\|_{h^p(\cx)}+\|u\|_{\dot{M}^s_{p,q}(\cx)}<\infty.$$
\item[{\rm(iii)}] Let $p,\ q\in(0,\infty]$. The \emph{inhomogeneous Haj\l asz--Besov space}
$N^s_{p,q}(\cx)$ is defined to be the set of all measurable
functions $u$ on $\cx$ such that
$$\|u\|_{N^s_{p,q}(\cx)}:=\|u\|_{h^p(\cx)}+\|u\|_{\dot{N}^s_{p,q}(\cx)}.$$
\end{enumerate}
\end{definition}

The following theorem states the inhomogeneous version of Theorem \ref{mr}.

\begin{theorem}\label{imr}
Let $\omega$ and $\eta$ be, respectively,
as in \eqref{eq-doub} and Definition \ref{exp-ati},
$\beta,\ \gamma\in(0,\eta)$, $s\in(0,\beta\wedge\gamma)$,
and $p,\ q$ be as in  Definition~\ref{ih}.
Assume that $\mu(\cx)=\infty$ and the measure $\mu$ of $\cx$
has a weak lower bound $Q=\omega$.
\begin{enumerate}
\item[{\rm(i)}] If $p\in (\omega/(\omega+s),\infty)$
and $q\in (\omega/(\omega+s),\infty]$, then $M^s_{p,q}(\cx)=\ihf$.
\item[{\rm(ii)}] If $p\in (\omega/(\omega+s),\infty]$
and $q\in(0,\infty]$, then $N^s_{p,q}(\cx)=\ihb$.
\end{enumerate}
\end{theorem}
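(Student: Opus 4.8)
The plan is to mimic the two-step strategy behind Theorem~\ref{mr}: first introduce the inhomogeneous grand Besov and Triebel--Lizorkin spaces $\ca B^s_{p,q}(\cx)$ and $\ca F^s_{p,q}(\cx)$, modeled on Definition~\ref{gbtl} but adapted to the exp-IATI structure of Definition~\ref{ih} — so that the ``low-frequency'' block $k\in\{0,\dots,N\}$ is measured by discrete averages over the dyadic cubes $Q_\az^{k,m}$ of $\sup_{\phi}|\langle f,\phi\rangle|$ with $\phi$ ranging over the unit ball of $\cg_0^\eta(\beta,\gamma)$ at scale $\delta^k$ (\emph{no} cancellation imposed when $k=0$), while the ``high-frequency'' tail $k\ge N+1$ is measured exactly as in Definition~\ref{gbtl} using $\cf_k(x)$. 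I would then prove Theorem~\ref{h-btl}, namely $\ihf=\ca F^s_{p,q}(\cx)$ and $\ihb=\ca B^s_{p,q}(\cx)$, and, separately, the identities $\ca F^s_{p,q}(\cx)=M^s_{p,q}(\cx)$ and $\ca B^s_{p,q}(\cx)=N^s_{p,q}(\cx)$; Theorem~\ref{imr} then follows by combining these, just as Theorem~\ref{mr} follows from Theorems~\ref{btl-gbtl} and \ref{hbtl-gbtl}.

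For Theorem~\ref{h-btl}, one inclusion is immediate: by Remark~\ref{q-f} (and its analog for $Q_0$, using that $Q_0$ has integral $1$ and obeys the size and regularity bounds of Lemma~\ref{pro-qk}), the kernel $Q_k(x,\cdot)$ is always an admissible test function, so $|Q_kf(x)|\le\sup_\phi|\langle f,\phi\rangle|$ and $\|f\|_{\ihf}\le\|f\|_{\ca F^s_{p,q}(\cx)}$, and likewise the discrete low-frequency averages are dominated termwise. The converse uses the inhomogeneous discrete Calder\'on reproducing formula (Lemma~\ref{icrf}): one expands $\langle f,\phi\rangle$, for $\phi\in\cf_l(x)$ or its no-cancellation analog at $l=0$, against this formula and invokes the almost-orthogonality estimate $|\int_\cx\widetilde Q_k(z,\ya)\phi(z)\,d\mu(z)|\lesssim\delta^{|k-l|\eta'}D_\gamma(x,\ya;\delta^{k\wedge l})$ — valid whenever one of the two operators carries cancellation, i.e.\ for $k\ge N+1$; for the finitely many indices $k\in\{0,\dots,N\}$ one uses instead $\lesssim D_\gamma(x,\ya;\delta^{k\wedge l})$ without decay, which is harmless since there are boundedly many such terms — followed by Lemmas~\ref{10.18.5} and \ref{9.14.1}, the geometric summation in $|k-l|$, and the Fefferman--Stein inequality (Lemma~\ref{fsvv}). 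This is essentially the proof of Theorem~\ref{btl-gbtl} with the low-frequency block carried along as an extra finite sum.

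For $\ca F^s_{p,q}(\cx)=M^s_{p,q}(\cx)$ I would follow the proof of Theorem~\ref{hbtl-gbtl}. For $M^s_{p,q}(\cx)\subset\ca F^s_{p,q}(\cx)$, fix $u\in M^s_{p,q}(\cx)$ (locally integrable by Corollary~\ref{pc-int}); for $k\ge1$ and cancellation test functions $\phi\in\cf_k(x)$, subtract $u_{B(x,\delta^k)}$ and rerun the computations \eqref{u-phi}--\eqref{u-phi3}, applying the Poincar\'e inequalities of Lemmas~\ref{vpc} and \ref{vpc2} and then Lemma~\ref{fsvv}, to control the tail $\sum_{k\ge N+1}$; for the low-frequency block $k\in\{0,\dots,N\}$ with no-cancellation $\phi$ one has $|\langle u,\phi\rangle|\lesssim\int_\cx D_\gamma(x,y;1)|u(y)|\,d\mu(y)\lesssim M(u)(x)$, so this block is dominated by $\|M(u)\|_{L^p(\cx)}\lesssim\|u\|_{h^p(\cx)}\le\|u\|_{M^s_{p,q}(\cx)}$, where for $p\le1$ one first replaces $M$ by $M_t(u):=[M(|u|^t)]^{1/t}$ with $t\in(\omega/(\omega+s),p)$ and uses the pointwise bound $|u|\lesssim\cm^+_0(u)$ coming from the reproducing formula. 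For the reverse inclusion, I would first prove the inhomogeneous analog of Lemma~\ref{btl-loc} — any $f\in\ca F^s_{p,\infty}(\cx)$ has a locally integrable representative $\widetilde f$, this time \emph{without} an additive constant, since the low-frequency block already pins $\widetilde f$ down in $h^p$/$L^p$ (take $P_kf\to\widetilde f$ for a 1-exp-IATI); then set $g_k:=\delta^{-ks}\sup_{\phi\in\cf_k(x)}|\langle f,\phi\rangle|$ for $k\ge1$ and $g_k:=\cm^+_0(f)$ for $k\le0$, check via the inhomogeneous version of Lemma~\ref{expfk} and \eqref{q-g}-type estimates that a suitable $\{h_k\}_{k\in\zz}$ built from these lies in $\dd^s(f)$, bound $\|(\sum_k h_k^q)^{1/q}\|_{L^p(\cx)}$ by the tail part of $\|f\|_{\ca F^s_{p,q}(\cx)}$ as at the end of the proof of Theorem~\ref{hbtl-gbtl} (the $k\le0$ contributions, where $d(x,y)\ge1$ so $[d(x,y)]^s\ge1$, are absorbed into $\|\cm^+_0(f)\|_{L^p(\cx)}=\|f\|_{h^p(\cx)}$), and finally bound $\|f\|_{h^p(\cx)}$ by the low-frequency block of $\|f\|_{\ca F^s_{p,q}(\cx)}$ via $|P_kf(x)|\lesssim\sum_{j=0}^k\delta^{js}g_j(x)$ together with $s>0$. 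The Besov case $\ca B^s_{p,q}(\cx)=N^s_{p,q}(\cx)$ is entirely parallel.

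The main obstacle I expect is the bookkeeping around the low-frequency block $k\in\{0,\dots,N\}$: there the operators $\widetilde Q_k$ (and the admissible test functions at $k=0$) carry no cancellation, so the decay factor $\delta^{|k-l|\eta'}$ is unavailable and one must instead exploit that only boundedly many scales are involved, while simultaneously proving that this block is comparable to the local Hardy space quantity $\|\cdot\|_{h^p(\cx)}$ entering the definition of the inhomogeneous Haj\l asz spaces — a comparison that, for $p\le1$, forces one to pass through the radial maximal function $\cm^+_0$ and the estimate $|u|\lesssim\cm^+_0(u)$ rather than the ordinary Hardy--Littlewood maximal operator. A secondary technical point is producing a single globally defined representative in the reverse inclusion: unlike the homogeneous case this needs no undetermined constant, but one must verify that $h^p$- (or $L^{p^\ast}$-) convergence of $P_kf$ is compatible with convergence in $(\icgg)'$.
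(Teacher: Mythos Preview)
Your proposal takes a genuinely different route from the paper. The paper does \emph{not} introduce inhomogeneous grand spaces or redo any of the Section~\ref{s2} machinery; instead it reduces everything to the already-proved homogeneous Theorem~\ref{mr} via the bridge Theorem~\ref{h-btl}. Note that Theorem~\ref{h-btl} is not the statement you describe (it has nothing to do with grand spaces): it says that $f\in\ihf$ if and only if $f\in h^p(\cx)$ \emph{and} the \emph{homogeneous} Littlewood--Paley quantity ${\rm J}_2=\|(\sum_{k\in\zz}\delta^{-ksq}|Q_kf|^q)^{1/q}\|_{L^p(\cx)}$ is finite, with equivalent norms. Once this is granted, and since Theorem~\ref{mr} identifies finiteness of ${\rm J}_2$ with membership in $\dot{M}^s_{p,q}(\cx)$, the identity $\ihf=M^s_{p,q}(\cx)$ follows straight from the definition $\|u\|_{M^s_{p,q}(\cx)}=\|u\|_{h^p(\cx)}+\|u\|_{\dot{M}^s_{p,q}(\cx)}$. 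The actual proof in the paper is under half a page: a case split on $p>1$ (where \cite[Theorem~6.12]{whhy} and $h^p(\cx)=L^p(\cx)$ supply the bridge) versus $p\le1$ (where Theorem~\ref{h-btl} is invoked directly, together with \cite[Propositions~3.15 and 4.4]{whhy} to move between the distribution spaces).

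Your approach --- build inhomogeneous grand spaces and prove two separate equalities --- could probably be made to work, but it is considerably more laborious and has a soft spot exactly where you anticipated one: the low-frequency block when $p\le1$. Your proposed fix, bounding $|\langle u,\phi\rangle|$ by $M_t(u)=[M(|u|^t)]^{1/t}$ with $t<p\le1$, points in the wrong direction (for $t<1$ one has $M_t(u)\le M(u)$, not the reverse), so this step as written does not close. What is really needed is that $\sup_\phi|\langle u,\phi\rangle|$ over non-cancellative $\phi$ at unit scale is a grand-maximal-type quantity comparable to $\cm^+_0(u)$ via the maximal-function characterizations of $h^p(\cx)$ in \cite{hyy19}. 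The paper sidesteps all of this by outsourcing the inhomogeneous/homogeneous comparison to Theorem~\ref{h-btl}, whose proof (imported from \cite{yz11}) is precisely where the assumption $\mu(\cx)=\infty$ enters.
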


To prove Theorem \ref{imr}, we first establish a
relationship between local Hardy spaces and inhomogeneous
Besov and Triebel--Lizorkin spaces, whose RD-space
version was obtained in \cite[Theorem 1.2]{yz11}. We point out that the proof of
\cite[Theorem 1.2]{yz11} just depends on the size, the regularity,
and the cancellation conditions of the approximation of the identity (for short, ATI),
but \emph{does not} involve the reverse doubling condition of
the underlying space and the bounded support of ATI. This results in
that the proof of \cite[Theorem 1.2]{yz11} is still valid in any space of homogeneous type due
to Definition \ref{exp-ati}(v) and Lemma \ref{pro-qk}; we omit the details.

\begin{theorem}\label{h-btl}
Let $\beta,\ \gamma\in (0,\eta)$ with $\eta$ as in
Definition \ref{exp-ati}, $s\in(0,\beta\wedge\gamma)$, and
$p\in(\omega/(\omega+s),\infty)$ with
$\omega$ as in \eqref{eq-doub}.
Assume $\mu(\cx)=\infty$. Let $\{Q_k\}_{k\in\zz}$ be an exp-ATI.
\begin{enumerate}
\item[{\rm(i)}] If $q\in(0,\infty]$,
then $f\in\ihb$ if and only if $f\in h^p(\cx)$ and
$${\rm J}_1:=\lf[\sum_{k=-\infty}^\fz\dz^{-ksq}\|Q_kf\|_{L^p(X)}^q\r]^{1/q}<\fz.$$
Moreover, $\|f\|_{\ihb}$ is equivalent to $\|f\|_{h^p(\cx)}+{\rm J}_1$ with positive
equivalence constants independent of $f$.
\item[{\rm(ii)}] If $q\in(\omega/(\omega+s),\infty]$, then $f\in\ihf$ if and only if $f\in h^p(\cx)$ and
$${\rm J}_2:=\lf\|\lf(\sum_{k=-\infty}^\fz\dz^{-ksq}|Q_kf|^q\r)^{1/q}\r\|_{L^p(X)}<\fz.$$
Moreover, $\|f\|_{\ihf}$ is equivalent to $\|f\|_{h^p(\cx)}+{\rm J}_2$
with positive equivalence constants independent of $f$.
\end{enumerate}
\end{theorem}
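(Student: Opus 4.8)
The plan is to prove part (i) and deduce (ii) by the same scheme, replacing every scalar $L^p$‑estimate for the Hardy--Littlewood maximal operator $M$ by the Fefferman--Stein vector‑valued inequality (Lemma~\ref{fsvv}). The key opening move exploits $\mu(\cx)=\fz$: a $1$‑exp‑ATI $\{S_k\}_{k\in\zz}$ telescopes to the identity with $S_{-\fz}=0$, so, writing $Q_k:=S_k-S_{k-1}$ — which, since ${\rm J}_1=\|f\|_{\hb}$ is independent of the exp‑ATI by \cite[Propositions~3.13 and~3.16]{whhy}, we may take as the exp‑ATI in the statement — the family $\{S_0\}\cup\{S_k-S_{k-1}\}_{1\le k\le N}\cup\{Q_k\}_{k>N}$ is an exp‑IATI, which by \cite[Propositions~4.3 and~4.4]{whhy} we use to compute $\|f\|_{\ihb}$. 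Set $\mathrm A(f):=\{\sum_{k=0}^N\sum_{\alpha\in\ca_k}\sum_{m=1}^{N(k,\alpha)}\mu(Q_\alpha^{k,m})[m_{Q_\alpha^{k,m}}(|Q_kf|)]^p\}^{1/p}$, $\mathrm L(f):=[\sum_{k\le N}\delta^{-ksq}\|Q_kf\|_{L^p(\cx)}^q]^{1/q}$ and $\mathrm H(f):=[\sum_{k>N}\delta^{-ksq}\|Q_kf\|_{L^p(\cx)}^q]^{1/q}$; then, by Definition~\ref{ih}, $\|f\|_{\ihb}\sim\mathrm A(f)+\mathrm H(f)$ and ${\rm J}_1\sim\mathrm L(f)+\mathrm H(f)$, so the theorem is reduced to proving $\mathrm A(f)+\mathrm H(f)\sim\|f\|_{h^p(\cx)}+\mathrm L(f)+\mathrm H(f)$.

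The inequality $\|f\|_{\ihb}\lesssim\|f\|_{h^p(\cx)}+{\rm J}_1$ is the easy half: the Remark after Definition~\ref{h1} identifies $h^p(\cx)$ with $F^0_{p,2}(\cx)$ (and with $L^p(\cx)$ when $p>1$), and $\mathrm A(f)$ is precisely one of the two summands of an equivalent norm of $F^0_{p,2}(\cx)$, so $\mathrm A(f)\lesssim\|f\|_{h^p(\cx)}$; since also $\mathrm H(f)\le{\rm J}_1$ (it is literally a tail of ${\rm J}_1$), the claim follows.

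For the converse $\|f\|_{h^p(\cx)}+{\rm J}_1\lesssim\|f\|_{\ihb}$ I would first estimate the local radial maximal function. For $k>N$, $|P_kf(x)|\le|P_Nf(x)|+\sum_{j>N}|Q_jf(x)|$, and $|P_Nf(x)|$ is dominated by the part of $\cm^+_0(f)(x)$ coming from levels $\le N$; using $s>0$ and H\"older in $j$ (or \eqref{r} when $q\le1$), $\sum_{j>N}|Q_jf(x)|\lesssim(\sum_{j>N}\delta^{-jsq}|Q_jf(x)|^q)^{1/q}$, so $\cm^+_0(f)\lesssim(\text{levels-}{\le}N\text{ part})+(\sum_{j>N}\delta^{-jsq}|Q_jf|^q)^{1/q}$; taking $L^p(\cx)$‑norms, invoking the equivalence of the $\sup$-, arbitrary‑point‑ and mean‑formulations of the discrete term (as established in \cite{whhy}), and, in the Besov case, the elementary bound $\|\sum_{j>N}|Q_jf|\|_{L^p(\cx)}\lesssim\mathrm H(f)$ (again valid because $s>0$), yields $\|f\|_{h^p(\cx)}\lesssim\mathrm A(f)+\mathrm H(f)\sim\|f\|_{\ihb}$. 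It remains to control $\mathrm L(f)$: because $s>0$ the weights $\{\delta^{-ksq}\}_{k\le N}$ sum to a finite constant, so $\mathrm L(f)\lesssim\sup_{k\le N}\|Q_kf\|_{L^p(\cx)}$, and I would expand each $Q_kf$, $k\le N$, through the inhomogeneous discrete Calder\'on reproducing formula (Lemma~\ref{icrf}) as a ``low part'' made of the $\widetilde Q_j$ with $0\le j\le N$ (integral $1$) plus a ``high part'' made of the $\widetilde Q_j$ with $j>N$ (cancellation). The high part, being a composition of the large‑scale cancellation kernel $Q_k$ with cancellation kernels, decays like $\delta^{(j-k)\eta'}$ and is absorbed, after summation in $k\le N$, by $\mathrm H(f)$; when $p>1$ the low part is immediate from $\|Q_k(P_Nf)\|_{L^p(\cx)}\le\|P_Nf\|_{L^p(\cx)}\lesssim\|f\|_{L^p(\cx)}=\|f\|_{h^p(\cx)}$, uniformly in $k\le N$, whence $\mathrm L(f)\lesssim\|f\|_{\ihb}$.

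The main obstacle is the low‑part estimate when $p\le1$: there $P_Nf\in L^p(\cx)$ may fail, and in Lemma~\ref{10.18.5} the admissible exponents satisfy $r<1$, so the factor $\delta^{[k-(k\wedge k')]\omega(1-1/r)}$ \emph{grows} as $k\to-\fz$, and the crude bound is useless. Following the RD‑space argument of \cite[Theorem~1.2]{yz11}, the remedy is to play the cancellation of the large‑scale block $Q_k$ against the scale‑$\delta^j$ regularity of $\widetilde Q_j$ for the finitely many $j\in\{0,\dots,N\}$, using the moment bound $\int_\cx D_\Gamma(x,z;\delta^k)[d(x,z)]^\beta\,d\mu(z)\lesssim\delta^{k\beta}$ for $\Gamma>\beta$ together with $|P_Nf|\lesssim\cm^+_0(f)$, so as to recover a positive power of $\delta^{k-j}$ that (after Lemma~\ref{10.18.5} with a suitable $r\in(\omega/(\omega+\Gamma),\min\{p,1\})$, possible since $p>\omega/(\omega+s)>\omega/(\omega+\Gamma)$ for $\Gamma$ large) is summed geometrically. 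This is exactly where $s>0$ (all geometric series in $k$ converge, and the homogeneous low‑frequency data of $f$ become controllable) and $\mu(\cx)=\fz$ (so that $S_{-\fz}=0$ and no additive constant intervenes in the telescoping) are indispensable; along the way one checks, as the paragraph preceding the theorem asserts, that only the size, regularity, and cancellation of Definition~\ref{exp-ati} and Lemma~\ref{pro-qk} are used — neither the reverse doubling condition nor any compact support of the ATI — while the quasi‑triangle constant $A_0\ge1$ enters only through the geometry of the dyadic cubes of Lemma~\ref{2-cube} and is handled precisely as in Lemma~\ref{pc}.
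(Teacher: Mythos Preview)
Your proposal is correct and follows precisely the approach the paper intends: the paper itself gives no proof of Theorem~\ref{h-btl} beyond observing that the argument of \cite[Theorem~1.2]{yz11} carries over verbatim because it uses only the size, regularity, and cancellation of the ATI (cf.\ Definition~\ref{exp-ati}(v) and Lemma~\ref{pro-qk}), and you have supplied a faithful and more detailed sketch of exactly that argument, including the crucial use of $\mu(\cx)=\infty$ to telescope the $1$-exp-ATI, the role of $s>0$ in summing the low-frequency weights, and the cancellation-versus-regularity mechanism that rescues the $p\le1$ case.
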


\begin{remark}
Usually, it makes no sense to write the conclusions
of Theorem \ref{h-btl} as $\ihb=h^p(\cx)\cap\hb$
and $\ihf=h^p(\cx)\cap\hf$ because  homogeneous
and inhomogeneous spaces are defined via different
kinds of spaces of distributions
(see \cite[Remark 1.1(iv)]{yz11}).
\end{remark}

Now, we show Theorem \ref{imr}.

\begin{proof}[Proof of Theorem \ref{imr}]
We only prove (i) because the proof of (ii) is similar to that of (i).
We first show $M^s_{p,q}(\cx)\subset\ihf$.
To this end, assume that $u\in M^s_{p,q}(\cx)$.
By Definition~\ref{ih-s-btl}, we know that $u\in h^p(\cx)$ and
$u\in \dot{M}^s_{p,q}(\cx)$.
We consider two cases on $p$.

{\it Case 1)} $p\in (1, \infty)$.
In this case, since $p\in (1,\infty)$, from  \cite[Theorem 3.3]{hyy19},
it follows that $u\in L^p(\cx)$.
Moreover, by Theorem \ref{mr}, we know that
$u\in \hf$. These, together with \cite[Theorem 6.12]{whhy}, imply that $u\in\ihf$ and
$\|u\|_{\ihf}\lesssim \|u\|_{M^s_{p,q}(\cx)}$.

{\it Case 2)} $p\in (\omega/(\omega+s), 1]$. In this case,
by Theorem \ref{mr}, we know that $u\in \hf$ and
$\|u\|_{\hf}\lesssim \|u\|_{\dot{M}^s_{p,q}(\cx)}$.
Then, using Theorem \ref{h-btl}(ii), we conclude that
$u\in\ihf$ and $\|u\|_{\ihf}\lesssim \|u\|_{M^s_{p,q}(\cx)}$.

Next, we show $\ihf\subset M^s_{p,q}(\cx)$.
To this end, assume that $u\in \ihf$. We also consider two cases on $p$.

{\it Case 1)} $p\in (1, \infty)$.
In this case, since $p\in (1,\infty)$,
from \cite[Theorem 6.12]{whhy}, it follows that $u\in L^p(\cx)\cap \hf$ and
$$\|u\|_{\ihf}\sim \|u\|_{L^p(\cx)}+\|u\|_{\hf}.$$
By \cite[Theorem 3.3]{hyy19} again, we know
that $u\in h^p(\cx)$ and $\|u\|_{h^p(\cx)}\sim\|u\|_{L^p(\cx)}$.
Besides, from Theorem \ref{mr}, we deduce  that
$u\in \dot{M}^s_{p,q}(\cx)$ and $\|u\|_{\dot{M}^s_{p,q}(\cx)}\sim\|u\|_{\hf}$,
which further  implies that $u\in M^s_{p,q}(\cx)$ and
$\|u\|_{M^s_{p,q}(\cx)}\lesssim \|u\|_{F^s_{p,q}(\cx)}$.

{\it Case 2)}  $p\in (\omega/(\omega+s), 1]$.
In this case, since $u\in\ihf$, from  \cite[Proposition 4.4]{whhy},
it follows that $u\in(\icgg)'$ with $\beta$ and $\gamma$ as in Definition~\ref{ih}.
By $p\in (\omega/(\omega+s), 1]$,
we know that $s>\omega(1/p-1)$.
Choosing $\beta_0\in(0,\eta)$ and
$\gamma_0\in (s,\eta)\subset(\omega[1/p-1],\eta)$,
then we find that $u\in(\cg_0^\eta(\beta_0,\gamma_0))'
\subset (\mathring{\cg}_0^\eta(\beta_0,\gamma_0))'$.
From this, Theorem \ref{h-btl}(ii), and \cite[Proposition 3.15]{whhy},
we deduce that $u\in\hf$ and
$$\|u\|_{\ihf}\sim \|u\|_{L^p(\cx)}+\|u\|_{\hf},$$
which implies that $u\in M^s_{p,q}(\cx)$ and
$\|u\|_{M^s_{p,q}(\cx)}\lesssim \|u\|_{F^s_{p,q}(\cx)}$.
This finishes the proof of (i) and hence of Theorem \ref{imr}.
\end{proof}

\begin{remark}
We point out that it is not clear whether or not Theorem \ref{imr}
still holds true when $\mu(\cx)\neq\infty$, due to that the
existence of exp-ATIs in Theorem \ref{h-btl} needs $\mu(\cx)=\infty$
[see Remark~\ref{2.8x}(i)].
\end{remark}

\paragraph{Acknowledgments.} The second author would like to
thank Ziyi He for his valuable discussions and suggestions
for  this article.

\bigskip

\noindent Ryan Alvarado

\medskip

\noindent Department of Mathematics and Statistics, Amherst College, 303 Seeley Mudd,
Amherst, MA 01002, United States of America

\smallskip

\noindent{\it E-mail:} \texttt{rjalvarado@amherst.edu}

\bigskip

\noindent Fan Wang, Dachun Yang (Corresponding author) and Wen Yuan

\medskip

\noindent Laboratory of Mathematics and Complex Systems (Ministry of Education of China),
School of Mathematical Sciences, Beijing Normal University, Beijing 100875, People's Republic of China

\smallskip

\noindent{\it E-mails:} \texttt{fanwang@mail.bnu.edu.cn} (F. Wang)

\noindent\phantom{{\it E-mails:} }\texttt{dcyang@bnu.edu.cn} (D. Yang)

\noindent\phantom{{\it E-mails:} }\texttt{wenyuan@bnu.edu.cn} (W. Yuan)

\bigskip

\end{document}